\newtheorem{theorem}{Theorem}[section]
\newtheorem{corollary}[theorem]{Corollary}
\newtheorem{definition}[theorem]{Definition}
\newtheorem{conjecture}[theorem]{Conjecture}
\newtheorem{observation}[theorem]{Observation}
\newtheorem{lemma}[theorem]{Lemma}
\newenvironment{proof}{\noindent {\bf Proof.}}{\rule{3mm}{3mm}\par\medskip}
\newcommand{\Z}{\mbox{$\mathbb Z$}}
\newcommand{\setCZ}{\mbox{${\mathcal S}_3$}}
\begin{document}
\title{Complementary Graphs with Flows Less Than Three}

\author{
\small  Jiaao Li$^1$, Xueliang Li$^2$, Meiling Wang$^2$\\
\small $^1$School of Mathematical Sciences and LPMC \\
\small Nankai University, Tianjin 300071, China \\
\small $^2$Center for Combinatorics and LPMC \\
\small Nankai University, Tianjin 300071, China \\
\small Emails: lijiaao@nankai.edu.cn; lxl@nankai.edu.cn; Estellewml@gmail.com
}

 \date{}
\maketitle
\begin{abstract}
X. Hou, H.-J. Lai, P. Li and C.-Q. Zhang [J. Graph Theory 69 (2012) 464-470] showed  that for a simple graph $G$ with $|V(G)|\ge 44$, if $\min\{\delta(G),\delta(G^c)\}\ge 4$,
then either $G$ or its complementary graph $G^c$ has a nowhere-zero $3$-flow. In this paper, we improve this result by showing that if $|V(G)|\ge 32$ and $\min\{\delta(G),\delta(G^c)\}\ge 4$, then either $G$ or $G^c$ has flow index strictly less than $3$. Our result is proved by a newly developed closure operation and contraction method.
\\[2mm]
\textbf{Keywords:} nowhere-zero flow;  flow index; strongly-connected orientation; contractible configuration; complementary graphs
\\[2mm] \textbf{AMS Subject Classification (2010):} 05C21, 05C40, 05C07
\end{abstract}

\section{Introduction}

Graphs in this paper may contain parallel edges but no loops. We call a graph {\em simple} if it contains no parallel edges. An integer flow of a graph $G$ is an ordered pair $(D, f)$, where $D$ is an orientation of $G$ and $f$ is a mapping from $E(G)$ to the set of integers such that the incoming netflow equals the outgoing netflow at every vertex. A flow $(D, f)$ is called a {\em nowhere-zero $k$-flow} if $f(e)\in\{\pm 1, \pm 2, \cdots, \pm(k-1)\}$ for every edge $e\in E(G)$. Tutte proposed several celebrated flow conjectures, and the $3$-flow conjecture is stated as follows.
\begin{conjecture}[Tutte's $3$-Flow Conjecture, 1972]
Every $4$-edge-connected graph has a nowhere-zero $3$-flow.
\end{conjecture}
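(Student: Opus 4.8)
The plan is to attack the conjecture through the formally stronger, more inductively robust framework of \emph{group connectivity}, combined with a minimal-counterexample analysis and discharging, using closure-type reductions analogous to the one developed in this paper together with the orientation-extension machinery of Lovász, Thomassen, Wu and Zhang. First, by Kochol's reduction it suffices to prove the conjecture for $5$-edge-connected graphs, so I would fix a hypothetical counterexample $G$ that is $5$-edge-connected, admits no nowhere-zero $3$-flow, and is minimal with respect to $|V(G)| + |E(G)|$. Rather than argue about flows directly, I would prove the stronger statement that every $5$-edge-connected graph is $\Z_3$-connected (the Jaeger--Linial--Payan--Tarsi group-connectivity conjecture in this range), because $\Z_3$-connectivity behaves well under contraction: if $H \subseteq G$ is $\Z_3$-connected, then $G$ is $\Z_3$-connected if and only if $G/H$ is, which is exactly what makes an induction driven by reducible configurations possible.

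Second, I would use contractibility to purge small configurations from the minimal counterexample. Since any $\Z_3$-connected subgraph can be contracted, $G$ contains no $K_4$, no nontrivial edge cut of size at most $4$ one of whose sides spans a $\Z_3$-connected subgraph, and --- via the orientation-extension lemma, which allows a prescribed boundary flow on a subgraph to be extended across the remainder --- no short circuit through vertices of small degree. Combined with a closure operation that absorbs a low-degree vertex into a chosen neighbour while keeping control of the resulting edge-connectivity, this should force $G$ to be locally sparse around light vertices and to have a rigid degree distribution, so that in particular the vertices of degree $5$ or $6$ are forced far apart.

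Third, I would run a discharging argument on $G$: assign each vertex $v$ the initial charge $d(v) - 6$, so that by $5$-edge-connectivity the total charge is controlled and only degree-$5$ vertices carry negative charge; then design rules that move charge from high-degree vertices (and from the surplus of edges guaranteed by $5$-edge-connectivity) toward the degree-$5$ vertices; and finally use the structural restrictions from the previous step to show no vertex can end with negative charge, contradicting the charge count. The edge-connectivity hypothesis is used twice here --- once to bound the global charge and once, through the extension lemma, to certify that any cluster of light vertices yields a contractible configuration.

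The main obstacle --- and the reason the conjecture is still open --- is the gap between what the contraction/extension machinery currently delivers and what is needed: the Lovász--Thomassen--Wu--Zhang method certifies $\Z_3$-connectivity only for $6$-edge-connected graphs, and every known reducible configuration "costs" more edge-connectivity than a $5$-edge-connected (let alone a $4$-edge-connected) graph can supply. Closing this gap requires either a genuinely stronger extension theorem --- one that certifies contractibility of a broader class of bounded-size configurations using only $5$-edge-connectivity --- or a more global argument, for instance producing, in the modular-orientation reformulation, an orientation in which every out-degree is congruent to its in-degree modulo $3$ directly from a $5$-edge-connected near-Eulerian structure. I expect the discharging bookkeeping to be routine once such a strengthened contraction lemma is available, so essentially all of the difficulty is concentrated in that single step, which is precisely the point where a closure operation in the style of this paper is meant to provide leverage but, as things stand, does not yet provide enough.
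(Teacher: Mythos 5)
This statement is Tutte's $3$-Flow Conjecture, which the paper does not prove --- it is quoted only as motivation and remains open. Your proposal is therefore not comparable to any proof in the paper, and more importantly it is not a proof at all: it is a research programme whose decisive step you yourself concede is missing. Concretely, the reduction via Kochol to $5$-edge-connected graphs is legitimate, and the strategy of upgrading to $\Z_3$-connectivity (the Jaeger--Linial--Payan--Tarsi conjecture) so that contraction of reducible configurations becomes available is the standard one; but the entire burden of the argument then rests on exhibiting a family of configurations that (a) must appear in every $5$-edge-connected graph, by your discharging count, and (b) are certified contractible using only $5$-edge-connectivity. You produce neither: no discharging rules are specified, no unavoidable set is derived, and no contraction lemma valid below $6$-edge-connectivity is proved. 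Your final paragraph states explicitly that ``every known reducible configuration costs more edge-connectivity than a $5$-edge-connected graph can supply'' and that closing this gap ``requires either a genuinely stronger extension theorem \dots{} or a more global argument.'' That sentence is an admission that the proof does not exist, not a step in one.

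Two further cautions. First, replacing the conjecture by the stronger $\Z_3$-connectivity statement for $5$-edge-connected graphs makes the target harder, not easier, unless the induction genuinely needs the stronger hypothesis at every step; you assert this but do not verify that the minimal counterexample to the \emph{stronger} statement inherits the structural restrictions you list (no $K_4$, no small cuts with a $\Z_3$-connected side, etc.) --- some of these reductions are known only under additional connectivity. Second, the initial charge $d(v)-6$ summed over a $5$-edge-connected simple graph is not ``controlled'' in any way that produces a contradiction by itself; $5$-regular $5$-edge-connected graphs have total charge $-|V(G)|$, so everything hinges on the unproven reducibility of the configurations surrounding degree-$5$ vertices. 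As it stands, the proposal correctly identifies where the difficulty lies but does not resolve it, so the statement remains exactly what the paper calls it: a conjecture.
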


Jaeger \cite{Jaeger79} in  1979 showed that every $4$-edge-connected graph has a nowhere-zero $4$-flow. In 2012, Thomassen \cite{Thom12} made a breakthrough on this conjecture  by showing that every $8$-edge-connected graph has a nowhere-zero $3$-flow. This was later improved by Lov\'asz, Thomassen, Wu and Zhang \cite{LTWZ13}.
\begin{theorem}{\em (Lov\'asz et al. \cite{LTWZ13})}\label{LTWZ2013thm}
Every $6$-edge-connected graph has a nowhere-zero $3$-flow.
\end{theorem}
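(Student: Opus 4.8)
\medskip\noindent\textbf{Proof strategy.}
The plan is to reconstruct the modulo-orientation argument of Lov\'asz, Thomassen, Wu and Zhang \cite{LTWZ13}, which sharpens the method of Thomassen \cite{Thom12}. The first step is to trade the flow statement for an orientation statement: a graph $G$ has a nowhere-zero $3$-flow if and only if it admits a \emph{modulo $3$ orientation}, i.e.\ an orientation $D$ with $d_D^{+}(v)\equiv d_D^{-}(v)\pmod 3$ for every $v$; since a $6$-edge-connected graph is bridgeless, producing such an orientation suffices. In fact I would prove the stronger statement that $G$ is $\mathbb{Z}_3$-\emph{connected}: for every $b\colon V(G)\to\mathbb{Z}_3$ with $\sum_{v\in V(G)}b(v)=0$ there is an orientation $D$ of $G$ with $d_D^{+}(v)-d_D^{-}(v)\equiv b(v)\pmod 3$ for all $v$ (the case $b\equiv 0$ gives the modulo $3$ orientation). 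Unlike ``has a nowhere-zero $3$-flow'', the property of being $\mathbb{Z}_3$-connected is amenable to induction, because a $\mathbb{Z}_3$-connected subgraph can absorb an arbitrary prescribed boundary.

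A naive induction on ``$G$ is $6$-edge-connected'' will not close, since the reduction moves below destroy $6$-edge-connectivity and create low-degree vertices. So the second step is to isolate a technical strengthening that is stable under those moves. Following \cite{Thom12,LTWZ13}, I would fix a designated \emph{root} vertex $z$ together with a bounded \emph{exceptional} set $S\subseteq V(G)\setminus\{z\}$ and prove, by induction on $|V(G)|+|E(G)|$, a statement roughly of the form: \emph{if every vertex outside $S\cup\{z\}$ has degree at least $6$, $z$ is joined to the rest of $G$ by enough edge-disjoint paths, and $|S|$ is small, then every admissible $b$ admits the required orientation, even when the orientation of some edges incident with $z$ is prescribed in advance.} The root $z$ and the prescription at $z$ serve to absorb the ``debt'' created by reductions: a local discrepancy is pushed onto $z$ and repaired at the end. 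How small one may keep $S$ (and how little one must assume about the connectivity to $z$) while still closing the induction is exactly where the edge-connectivity $8$ of \cite{Thom12} improves to $6$ in \cite{LTWZ13}.

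The third step is to run the induction on a counterexample $G$ to the technical statement with $|V(G)|+|E(G)|$ minimum, using three types of reduction. (i) \emph{Contracting a $\mathbb{Z}_3$-connected subgraph}: if $H\subseteq G$ is $\mathbb{Z}_3$-connected with $|V(H)|\ge 2$, then $G$ is $\mathbb{Z}_3$-connected iff $G/H$ is, and $G/H$ is strictly smaller, so (after checking that $G/H$ still meets the technical hypothesis, with the contracted vertex absorbed into the root or into $S$) the minimal counterexample contains no nontrivial $\mathbb{Z}_3$-connected subgraph; in particular it is simple, since two parallel edges already form one. (ii) \emph{Suppressing a degree-$2$ vertex} (delete $v$ and join its two neighbours by a new edge), which is harmless for $\mathbb{Z}_3$-orientations, together with a more delicate \emph{degree-$3$ reduction} lemma in the spirit of \cite{Thom12} that removes degree-$3$ vertices surviving (i). (iii) \emph{Splitting off a pair of edges} at $z$ (or at a high-degree vertex): by the Mader/Lov\'asz edge-splitting theorem two edges at $z$ can be lifted while preserving the relevant edge-connectivity, unless a nontrivial small cut blocks it, in which case that cut yields a proper subgraph handled by induction and then contracted. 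Iterating these moves forces the minimal counterexample to be a simple graph with no degree-$2$ and no degree-$3$ vertex, no nontrivial $\mathbb{Z}_3$-connected subgraph, and minimum degree at least $6$ off the bounded set $S$ --- yet locally sparse everywhere.

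The final step is a global edge count: minimum degree at least $6$ off a bounded set forces $|E(G)|\ge 3|V(G)|-O(1)$, while the structural restrictions accumulated above (no nontrivial $\mathbb{Z}_3$-connected subgraph, tightly controlled neighbourhoods, controlled structure around $z$) force $|E(G)|<3|V(G)|-O(1)$, a contradiction; hence there is no counterexample, $G$ is $\mathbb{Z}_3$-connected, and in particular has a nowhere-zero $3$-flow. The hard part is making this count tight enough to reach edge-connectivity $6$ rather than $8$: Thomassen's argument leaves slack, and the gain in \cite{LTWZ13} comes from (a) a leaner inductive statement with a smaller exceptional set, and (b) a finer treatment of the case in which edge-splitting at the root is obstructed by a nontrivial small cut, where one must show the two sides of the cut are each handled by induction and can be glued along the cut without wasting connectivity. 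Verifying that every reduction really preserves the technical hypothesis with the small exceptional set, and balancing the final count, is where essentially all of the work lies.
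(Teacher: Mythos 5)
This theorem is not proved in the paper at all: it is quoted from Lov\'asz, Thomassen, Wu and Zhang \cite{LTWZ13} and used as a black box, so there is no internal proof to compare against. Judged on its own, your proposal is a roadmap of the known modulo-orientation method rather than a proof, and the gap is exactly where you locate it yourself: the technical inductive statement is only given ``roughly of the form,'' and you concede that verifying that every reduction preserves it ``is where essentially all of the work lies.'' Without the precise hypothesis --- in \cite{LTWZ13} this is a statement about a single special vertex $z_0$ whose incident edges are pre-oriented, with degree lower bounds at every other vertex measured against the boundary function (conditions of the type $d(v)\ge 4+|\tau(v)|$ with $\tau$ determined by $\beta(v)$ and $d(v)$ modulo $3$), not a ``bounded exceptional set $S$'' --- none of the reduction steps can even be checked, so nothing is established.

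Two further points of divergence from the actual argument are worth flagging. First, the Lov\'asz--Thomassen--Wu--Zhang induction (like Thomassen's) is deliberately self-contained and does not invoke the Mader/Lov\'asz edge-splitting theorem; the moves are contraction of subgraphs that are already $\Z_3$-connected (or absorbable into $z_0$) and careful re-prescription of orientations at $z_0$, with the contradiction in a minimal counterexample obtained by local analysis at a vertex of small degree or at a small cut meeting $z_0$. Second, the closing step is not a global density count of the form $|E(G)|\ge 3|V(G)|-O(1)$ versus $|E(G)|<3|V(G)|-O(1)$; that style of counting resembles the discharging used elsewhere (and in the present paper's Lemma on sparseness of $\setCZ$-graphs), but it is not how the $6$-edge-connectivity bound is reached. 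So even as a reconstruction plan, the sketch would need its skeleton replaced by the correct inductive statement before any of the subsequent steps could be carried out.
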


Besides the edge connectivity conditions, Hou, Lai, Li and Zhang \cite{HLLZ12} studied the $3$-flow property of a graph $G$ and its complementary graph $G^c$, providing another evidence to Tutte's $3$-flow conjecture.
\begin{theorem}\label{Hou12} {\em (Hou et al. \cite{HLLZ12})}
Let $G$ be a simple graph with $|V(G)|\ge 44$. If $\min\{\delta(G),\delta(G^c)\}\ge 4$,
then either $G$ or $G^c$ has a nowhere-zero $3$-flow.
\end{theorem}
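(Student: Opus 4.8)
The plan is to build a graph property adapted to the ``flow index $<3$'' condition together with two operations — a Bondy--Chv\'atal-type closure and a contraction reduction — in the spirit of the $\Z_3$-connectivity machinery behind Theorem~\ref{LTWZ2013thm}. The starting point is the standard equivalence that a graph $H$ has flow index strictly less than $3$ if and only if $H$ admits a strongly connected orientation $D$ with $d_D^+(v)\equiv d_D^-(v)\pmod 3$ for every $v\in V(H)$. I would then work with the strengthening $\F$ consisting of the ``strongly $\Z_3$-connected'' graphs: $H\in\F$ if for every $\beta\colon V(H)\to\Z_3$ with $\sum_{v}\beta(v)\equiv 0 \pmod 3$ the graph $H$ has a strongly connected orientation $D$ with $d_D^+(v)-d_D^-(v)\equiv\beta(v)\pmod 3$ for all $v$. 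The facts to verify are: (i) membership in $\F$ implies flow index $<3$; (ii) $\F$ is closed under adding edges, so if some spanning subgraph of $H$ lies in $\F$ then $H\in\F$; and (iii) if $W\subseteq V(H)$ induces a connected subgraph with $H[W]\in\F$, then $H\in\F$ iff $H/W\in\F$ — in particular, if $H$ can be reduced to a single vertex by successively contracting $\F$-subgraphs, then $H\in\F$. One also needs a supply of small ``cores'' known to be in $\F$: complete graphs $K_m$ and complete bipartite graphs $K_{s,t}$ for suitable small $m$ and $s,t$, checked directly.

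Next I would develop the closure: show there is a threshold $\theta$ of order $n=|V(H)|$ such that for non-adjacent $u,v$ with $d_H(u)+d_H(v)\ge\theta$ one has $H\in\F$ if and only if $H+uv\in\F$; iterating gives a closure $\mathrm{cl}(H)$ with $H\in\F\iff\mathrm{cl}(H)\in\F$. When $\mathrm{cl}(H)$ is complete we are done, since $K_n\in\F$ for $n$ past a small constant; when $\mathrm{cl}(H)$ fails to be complete, the standard closure bookkeeping confines its structure (it embeds in a join of a small clique with a sparse part), and that restricted shape is what the rest of the argument analyzes by hand.

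The combinatorial heart uses complementation. For every $v$ we have $d_G(v)+d_{G^c}(v)=n-1\ge 31$. Partition $V(G)=A\cup B$, where $A$ collects the vertices $v$ with $d_G(v)\ge (n-1)/2$ and $B$ the rest; without loss of generality $|A|\ge n/2\ge 16$. Then $G[A]$ has minimum degree at least $|A|-(n+1)/2$, and of the two bipartite graphs on the partition $(A,B)$ — the one inside $G$ and its complement inside $G^c$ — the denser has at least half of the $|A|\,|B|$ possible edges. The argument then splits on the size of $B$. If $|B|$ is at most a bounded number (the bound being what the hypothesis $|V(G)|\ge 32$ buys us), then $G[A]$ is dense, its closure is complete, so $G[A]\in\F$; contracting $G[A]$ and absorbing the few vertices of $B$ one at a time — each has at least $4$ neighbours in $G$ or in $G^c$, and once the core is large enough of them point into the current $\F$-blob — collapses $G$ or $G^c$ to a single vertex, and (i) finishes. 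If instead $|B|$ is large, then $|A|$ and $|B|$ are both close to $n/2$, and whichever of $G,G^c$ carries the denser $(A,B)$-bipartite graph contains a complete bipartite subgraph $K_{s,t}$ with $s,t$ growing with $n$; this $K_{s,t}$ lies in $\F$, and contracting it and absorbing the remaining vertices as before, with $\min\{\delta(G),\delta(G^c)\}\ge 4$ keeping the absorption alive, again finishes.

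The step I expect to be the main obstacle is the balanced case $|A|\approx |B|\approx n/2$: there neither $G$ nor $G^c$ has large minimum degree, so the closure cannot be run on the whole graph, and one must instead locate the complete bipartite core across the $(A,B)$-cut, verify it belongs to $\F$, and — the delicate point — check that after contracting it the degrees that remain in whichever of $G,G^c$ we have committed to are still large enough to finish, either by absorbing the leftover vertices or by a second application of the closure. Making the arithmetic of this reduction close is precisely what pins down the bound $|V(G)|\ge 32$, and is where most of the technical effort goes.
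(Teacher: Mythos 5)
Your family $\F$ is exactly the paper's class $\setCZ$ of strongly-connected $\Z_3$-contractible graphs, and your facts (i)--(iii) correspond to Lemma \ref{reduc-lem} and Observation \ref{inandnotin}, so the general framework matches the one the paper builds to prove its stronger Theorem \ref{thmmain1} (which subsumes the stated result). But your two load-bearing devices are not supported. The Bondy--Chv\'atal-type closure --- that for non-adjacent $u,v$ with $d_H(u)+d_H(v)\ge\theta(n)$ one has $H\in\F$ if and only if $H+uv\in\F$ --- is only asserted: the direction of adding an edge is trivial, but nothing is offered (and nothing in the known machinery gives) the converse, and the ``standard closure bookkeeping'' for a non-complete closure is specific to Hamiltonicity-type properties. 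The paper uses a different closure altogether, the $3$-closure, which absorbs a vertex sending at least $3$ edges into the current core; its validity (Lemmas \ref{3edge} and \ref{3closure}) rests on a strongly-connected orientation-extension lemma and on $4$-edge-connectivity of the host graph, a hypothesis your outline never secures ($\delta\ge 4$ does not imply it; the paper disposes of the non-$4$-edge-connected case separately in Lemma \ref{4edgeconn}).

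The endgame is also flawed. Absorbing a leftover vertex joined to the core by only $3$ (let alone $2$) edges is not an instance of contracting an $\F$-subgraph, since $mK_2\in\setCZ$ only for $m\ge 4$ (Lemmas \ref{totaledge} and \ref{H}); and full collapse of $G$ or $G^c$ to a single vertex is impossible in general: Theorem \ref{main2s} exhibits, under exactly these degree hypotheses, graphs with bad attachments (e.g.\ a triangle joined to the rest by $6$ edges, compatible with $\delta\ge4$) for which neither $G$ nor $G^c$ lies in $\setCZ$. Accordingly the paper does not aim at full collapse: it chooses a maximal $3$-closure $Y$ over both $G$ and $G^c$, proves $|V(G)\setminus Y|\le 4$ by a complementation argument (Lemma \ref{Ylarge4}) quite different from your degree partition into $A\cup B$, and finishes by showing directly that the remaining $4$-edge-connected contraction on at most $5$ vertices has flow index less than $3$ (Lemma \ref{n<=5}). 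Your proposal has no analogue of either final step, so the argument does not close. Two smaller points: half the edges across $(A,B)$ yields only a $K_{4,cn}$-type subgraph, not a complete bipartite graph with both sides growing (though $K_{4,t}$, $t\ge 10$, would suffice by Lemma \ref{bi}); and the threshold $|V(G)|\ge 32$ you keep invoking belongs to the paper's improvement, not to the statement being proved.
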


For integers $k\ge 2d>0$, a {\em circular $k/d$-flow} is an integer flow $(D, f)$ such that $f$ takes values from $\{\pm d, \pm(d+1), \dots, \pm(k-d)\}$. When $d=1$, this is exactly the nowhere-zero $k$-flow.
The {\em flow index $\phi(G)$} of a graph $G$ is the least rational number $r$ such that $G$ admits a circular
$r$-flow. It was proved in \cite{GTZ98} that such an index indeed exists, and the circular flow satisfies the monotonicity that for any pair of
rational numbers $r\ge s$, a graph admitting a circular $s$-flow has a circular $r$-flow as well. Thus circular flows are refinements of integer flows.

A {\em modulo $3$-orientation} is an orientation $D$ of $G$ such that the outdegree is congruent to the indegree modulo $3$ at each vertex. It is well-known that a graph admits a nowhere-zero $3$-flow if and only if it admits a modulo $3$-orientation (see \cite{Jaeger88,Younger83,Zhang97}).
The study of flow index strictly less than $3$ is initiated in \cite{LTWZ18} with the following theorems.

\begin{theorem}\label{SClemma}{\em(\cite{LTWZ18})}
A graph $G$ satisfies $\phi(G) < 3$ if and only if
$G$ has a strongly-connected modulo $3$-orientation.
\end{theorem}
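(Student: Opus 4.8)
I will prove the two implications separately, using two standard facts recalled or cited above: a graph admits a nowhere-zero $3$-flow iff it admits a modulo $3$-orientation iff it admits a nowhere-zero $\mathbb{Z}_3$-flow; and, by monotonicity together with the fact that $\phi$ is attained, $\phi(G)<3$ holds iff $G$ has a circular $\tfrac{3t-1}{t}$-flow for some integer $t\ge 1$, i.e.\ an orientation $D'$ carrying an integral circulation with all values in $\{t,t{+}1,\dots,2t{-}1\}$. By Hoffman's circulation theorem (as used in \cite{GTZ98}) the latter is in turn equivalent to: $G$ has an orientation $D'$ such that every nonempty proper edge cut $\delta(X)$ satisfies $|\delta^+_{D'}(X)|<2|\delta^-_{D'}(X)|$ and $|\delta^-_{D'}(X)|<2|\delta^+_{D'}(X)|$; call such a $D'$ \emph{balanced}. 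A balanced orientation is strongly connected, and it forbids $|\delta_G(X)|=3$; since a modulo $3$-orientation directs a cut $\delta(X)$ entirely one way only when $3\mid|\delta_G(X)|$, it follows that whenever $\phi(G)<3$ every directed cut of any modulo $3$-orientation of $G$ has at least $6$ edges.

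\smallskip
\noindent\emph{Step 1: a strongly-connected modulo $3$-orientation gives $\phi(G)<3$.}
Let $D$ be such an orientation. Since $D$ is a modulo $3$-orientation, the all-$1$ function is a nowhere-zero $\mathbb{Z}_3$-flow on $D$ and lifts to an integral circulation $\psi$ on $D$ with $\psi(e)\in\{1,-2\}$ for every edge; let $D^*$ be $D$ with the edges where $\psi=-2$ reversed, and $\varphi=|\psi|\colon E(G)\to\{1,2\}$ the resulting nowhere-zero $3$-flow on $D^*$. Since $D$ is strongly connected it carries a positive integral circulation $h$ (for each edge $e$ take a directed cycle of $D$ through $e$, and add all of them); let $h^*$ be $h$ transported onto $D^*$, so $h^*=h$ on the edges where $\varphi=1$ and $h^*=-h$ on the edges where $\varphi=2$. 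For a large integer $N$ put $g_N:=N\varphi+h^*$, a circulation on $D^*$. On an edge with $\varphi=1$ one has $g_N=N+h\ge N+1$, and on an edge with $\varphi=2$ one has $g_N=2N-h\in[\,2N-h_{\max},\,2N-1\,]$; hence for $N$ large enough $g_N$ is everywhere positive with all values in $[\,N+1,\,2N-1\,]$. (If $\varphi\equiv1$ or $\varphi\equiv2$ then $D^*$ is Eulerian and already $\phi(G)=2$.) Thus $g_N$ is a circular $\tfrac{3N}{N+1}$-flow, so $\phi(G)\le\tfrac{3N}{N+1}<3$.

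\smallskip
\noindent\emph{Step 2: $\phi(G)<3$ gives a strongly-connected modulo $3$-orientation.}
Fix a balanced orientation $D'$; since $\phi(G)\le 3$, $G$ also has at least one modulo $3$-orientation. Among all modulo $3$-orientations choose $D$ minimizing the number of edges on which $D$ and $D'$ disagree, and let $T$ be that disagreement set. Suppose $D$ were not strongly connected, say $\delta^-_D(X)=\varnothing$; then $|\delta_G(X)|\ge6$ by the remark above, and balancedness of $D'$ gives $|\delta^-_{D'}(X)|>\tfrac13|\delta_G(X)|\ge2$, so at least three edges of $\delta(X)$ run from $\bar X$ to $X$ in $D'$ while $D$ runs them from $X$ to $\bar X$; in particular $|T\cap\delta(X)|\ge3$. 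I now want a nonempty edge set $H$ whose indicator is a $\mathbb{Z}_3$-flow of $G$ (equivalently, reversing $H$ sends $D$ to another modulo $3$-orientation, since the all-$1$ flow plus $\mathbf 1_H$ is then nowhere zero on $D$) and which meets $T$ in strictly more than half its edges; reversing such an $H$ produces a modulo $3$-orientation strictly closer to $D'$, contradicting the choice of $D$, and hence $D$ is strongly connected. To build such an $H$ one uses the circulation realizing the circular flow, transported onto $D$, together with $|T\cap\delta(X)|\ge3$ and the connectivity of $G$, to route a $\mathbb{Z}_3$-flow through enough disagreement edges of $\delta(X)$ with few auxiliary edges (routing inside $G[X]$ and inside $G[\bar X]$ so that residues cancel modulo $3$ on each side). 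Showing that such an economical routing always exists --- over all the ways the two sides may split into components --- is the technical heart of the argument; an alternative is an induction on $|E(G)|$ that contracts a $\mathbb{Z}_3$-connected configuration, applies the inductive hypothesis to the quotient, and uses the configuration's absorbing property to re-expand while keeping both the congruence condition and strong connectivity.

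\smallskip
\noindent Step 1 is the easy direction, reducing to the circulation estimate above; the substance is Step 2, where turning the purely metric data of a circular flow of index below $3$ into the congruence data of a modulo $3$-orientation, without losing strong connectivity, is the main obstacle --- concretely, the construction (or re-expansion) of the augmenting $\mathbb{Z}_3$-flow.
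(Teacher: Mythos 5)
First, a remark on scope: the paper itself does not prove Theorem \ref{SClemma}; it is quoted from \cite{LTWZ18}, so there is no in-paper proof to compare with, and your argument has to stand on its own. Your Step 1 does: lifting the all-one $\mathbb{Z}_3$-flow of a modulo $3$-orientation $D$ to an integer circulation with values in $\{1,-2\}$, reorienting to get $\varphi\in\{1,2\}$, and perturbing by a positive circulation of $D$ (which exists exactly because $D$ is strongly connected) to obtain values in $[N+1,2N-1]$ correctly yields $\phi(G)\le 3N/(N+1)<3$. That half is fine and is essentially the standard argument.

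The genuine gap is in Step 2, which is the substantive direction of the equivalence. Your reduction is sound as far as it goes: the balanced-orientation characterization of $\phi(G)<3$, the choice of a modulo $3$-orientation $D$ minimizing the disagreement set $T$ with a balanced orientation $D'$, and the deduction that a cut $\delta(X)$ directed one way by $D$ has at least $6$ edges and satisfies $|T\cap\delta(X)|\ge 3$. But the entire proof then rests on the existence of a nonempty edge set $H$ whose indicator is a $\mathbb{Z}_3$-flow with respect to $D$ and which satisfies $|H\cap T|>|H\setminus T|$, and you never establish this; you explicitly defer it as ``the technical heart.'' It is far from automatic: the hypothesis $\phi(G)<3$ only excludes $1$- and $3$-edge-cuts (graphs with $2$-edge-cuts, e.g.\ a digon, have $\phi=2$), the sides $G[X]$ and $G[\overline{X}]$ may split into many components, and any $\mathbb{Z}_3$-flow whose support crosses $\delta(X)$ must be completed by return routes whose edges may lie entirely outside $T$, which can destroy the inequality $|H\cap T|>|H\setminus T|$. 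So the claimed contradiction with the minimality of $T$ is not available as stated. The fallback you mention (induction by contracting a $\mathbb{Z}_3$-connected configuration and re-expanding) is likewise only a gesture, and that machinery is tailored to sufficient edge-connectivity theorems rather than to an equivalence that must hold for every graph with $\phi<3$. As it stands, the ``only if'' half of Theorem \ref{SClemma} is not proved, so the proposal is an incomplete proof rather than an alternative one.
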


\begin{theorem}{\em(\cite{LTWZ18})}
\label{<3}
For every $8$-edge-connected graph $G$, the flow index $\phi(G)<3$.
\end{theorem}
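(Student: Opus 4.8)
\medskip

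\noindent\textbf{Proof strategy.}
By Theorem~\ref{SClemma}, proving $\phi(G)<3$ is the same as producing a strongly-connected modulo $3$-orientation of $G$, and the plan is to obtain one by induction. Since a bare existence statement is too rigid to carry an induction through contractions, I would strengthen it with a boundary function. Call a graph $H$ \emph{strongly $\mathbb{Z}_3$-connected} if for every $b:V(H)\to\mathbb{Z}_3$ with $\sum_{v\in V(H)}b(v)\equiv 0\pmod 3$ there is a \emph{strongly-connected} orientation $D$ of $H$ with $d_D^+(v)-d_D^-(v)\equiv b(v)\pmod 3$ for all $v$. Taking $b\equiv 0$ recovers a strongly-connected modulo $3$-orientation, so it suffices (and is the natural inductive statement) to show that every $8$-edge-connected graph is strongly $\mathbb{Z}_3$-connected. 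Note also that contracting a \emph{connected} subgraph never decreases edge-connectivity, so the class of $8$-edge-connected graphs is stable under the contractions we will need.

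The engine is a contraction/reduction scheme in the spirit of Thomassen and of Lov\'asz--Thomassen--Wu--Zhang (the boundary-version strengthening of Theorem~\ref{LTWZ2013thm}). Fix a counterexample $G$ minimizing $|V(G)|$ and a witnessing boundary $b$. The aim is to exhibit a \emph{reducible configuration}: a connected subgraph $H$ with $1<|V(H)|<|V(G)|$ that is itself strongly $\mathbb{Z}_3$-connected (the building blocks being small dense pieces and configurations with parallel edges that arise after contraction, e.g.\ copies of $K_4$, short cycles with a chord, multi-edges), such that $G/H$ still lies in the inductive class. The lifting step then runs as follows: transport $b$ to a boundary $\bar b$ on $G/H$ by summing the $b$-values over $V(H)$ at the new vertex; apply the inductive hypothesis to get a strongly-connected orientation of $G/H$ realizing $\bar b$; read off the net in/out flow this prescribes across the edges incident to $H$, which yields a valid boundary on $H$ compatible with $b$; invoke strong $\mathbb{Z}_3$-connectivity of $H$ to orient $H$ accordingly; and glue. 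That the glued orientation realizes $b$ is immediate; that it is strongly connected follows because $H$ and $G/H$ are both strongly oriented and every edge between them can be traversed in both ``directions'' by routing through the two strongly-connected parts. This contradicts minimality.

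The step I expect to be the genuine obstacle is proving that an $8$-edge-connected minimal counterexample \emph{must} contain such a configuration. This is the hard combinatorial core, and it is where the hypothesis $8$ (rather than $6$) is spent. One argues that the minimum degree of $G$ is small relative to $8$ in a suitable local sense, or else finds a sparse edge cut exposing a small reducible side; the tool is a Mader-type splitting-off argument performed at a carefully chosen vertex, designed so that the residual graph retains enough edge-connectivity to fall inside the inductive class \emph{and} so that the split-off edges can be re-inserted while preserving strong connectivity. The surplus two units of edge-connectivity over the $6$ needed merely for a modulo $3$-orientation are exactly the slack that lets the splitting both keep the graph in the class and be undone without destroying strong connectivity. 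Once the configuration is in hand, reducibility follows from the lifting step above, completing the induction and hence the proof.
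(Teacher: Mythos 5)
First, note that the paper does not prove this statement at all: Theorem~\ref{<3} (and its strengthening, Theorem~\ref{8edge}, that every $8$-edge-connected graph lies in \setCZ) is imported from \cite{LTWZ18}, so there is no in-paper proof to match your argument against; your proposal has to stand on its own. Its framework is sensible and consistent with how the paper uses the result: your ``strongly $\Z_3$-connected'' class is exactly \setCZ{} of Definition~\ref{DEF:CZ3}, and your lifting/gluing step is precisely Lemma~\ref{reduc-lem}. But as a proof there is a genuine gap, and you name it yourself: the entire content of the theorem is the claim that an $8$-edge-connected minimal counterexample admits the proposed reduction, and this is left as ``the hard combinatorial core'' with only a gesture toward Mader splitting. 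A strategy whose decisive step is declared an obstacle is not a proof.

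Moreover, the mechanism you propose for that step cannot work as stated. Your candidate reducible configurations ($K_4$, short cycles with a chord, small dense pieces) are not in \setCZ: by Lemma~\ref{totaledge} a nontrivial member of \setCZ{} needs at least $3|V|-2$ edges, so $K_4$ and chorded short cycles fail (indeed $K_4$ is not even $\Z_3$-connected), and even a triple edge is excluded (Lemma~\ref{H} requires $m\ge 4$; triple edges are only $(\phi<3)$-contractible inside a $4$-edge-connected host, Lemma~\ref{3edge}). Worse, a simple $8$-regular $8$-edge-connected graph of large girth contains no nontrivial subgraph of density $3|V|-2$ of bounded size at all, so a minimal counterexample need not contain any contractible configuration of the kind you describe, and the induction never gets started. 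This is exactly why the actual argument in \cite{LTWZ18} (following Thomassen and Lov\'asz--Thomassen--Wu--Zhang) is not an unavoidable-configuration/contraction scheme but an induction on a more refined statement featuring a distinguished vertex with pre-oriented incident edges and a boundary function, where one deletes a vertex and lifts (splits off) edge pairs at it; the surplus connectivity is spent on controlling the boundary and strong connectivity at the reinserted vertex, a step your sketch acknowledges but does not carry out. To repair the proposal you would need to formulate and prove that refined inductive statement; asserting that splitting-off ``can be undone without destroying strong connectivity'' is precisely what must be shown.
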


It is worth noting that (see \cite{Jaeger88}) if $\phi(G)\le 5/2$ for every $9$-edge-connected graph $G$, then Tutte's $5$-Flow Conjecture follows, that is,  $\phi(G)\le 5$ for every bridgeless graph $G$.
Since $K_6$ has only one modulo 3-orientation up to isomorphism which is not strongly-connected, we have $\phi(K_6)=3$, and so Theorem \ref{<3} cannot be extended to $5$-edge-connected graphs.
In \cite{LTWZ18}, it was conjectured that the $6$-edge-connectivity suffices for $\phi(G)<3$.
\begin{conjecture}\label{CONJ:<3}{\em(\cite{LTWZ18})}
For every $6$-edge-connected graph $G$, the flow index $\phi(G)<3$.
\end{conjecture}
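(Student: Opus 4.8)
This is still an open problem, so what follows is a strategy rather than a proof. The plan is to pass, via Theorem~\ref{SClemma}, to a ``boundary'' strengthening and then mirror the inductive machinery of Lov\'asz--Thomassen--Wu--Zhang behind Theorem~\ref{LTWZ2013thm}. Call a connected graph $H$ with $|V(H)|\ge 2$ \emph{strongly $\mathbb Z_3$-connected} if for every $\beta\colon V(H)\to\mathbb Z_3$ with $\sum_{v\in V(H)}\beta(v)\equiv 0\pmod 3$ there is a \emph{strongly-connected} orientation $D$ of $H$ satisfying $d^+_D(v)-d^-_D(v)\equiv\beta(v)\pmod 3$ for every $v$. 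Taking $\beta\equiv 0$ and invoking Theorem~\ref{SClemma}, every strongly $\mathbb Z_3$-connected graph $G$ has $\phi(G)<3$, so it suffices to prove that every $6$-edge-connected graph is strongly $\mathbb Z_3$-connected. (This is consistent with $\phi(K_6)=3$, since $K_6$ is only $5$-edge-connected.)

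The first tool to establish is a contraction lemma: \emph{if $H$ is a connected, strongly $\mathbb Z_3$-connected proper subgraph of $G$ and the contraction $G/H$ is strongly $\mathbb Z_3$-connected, then $G$ is strongly $\mathbb Z_3$-connected}. Given a boundary $\beta$ on $V(G)$, push it forward to $G/H$, realize it by a strongly-connected mod $3$-orientation of $G/H$, let $\beta_H$ be the residual boundary that this orientation induces on $H$ through the edges leaving $H$ (its sum over $V(H)$ is $0$ by a short computation using $\sum_v\beta(v)\equiv 0$), realize $\beta_H$ by a strongly-connected mod $3$-orientation of $H$, and paste the two orientations together. The result is strongly connected because the contracted vertex, lying in a strongly-connected digraph, has both an in-edge and an out-edge, so $V(H)$—a single strong class by the choice made inside $H$—both reaches and is reached from $V(G)\setminus V(H)$, and every remaining reachability is inherited from $G/H$.

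Next, as in the LTWZ framework, I would assemble a library of small \emph{strongly $\mathbb Z_3$-connected} graphs to play the role of contractible configurations: parallel classes of high enough multiplicity, a few small dense graphs (suitable wheels, prisms, $K_4$ with a doubled edge, and the like), and everything obtained from these by the contraction operation above. For each such graph one must verify, by a finite check over all admissible $\beta$, that a realizing orientation can be chosen strongly connected; this is exactly where the strong-connectivity demand genuinely strengthens the classical statement, but it is decidable case by case. One then takes a minimal counterexample $G$—a $6$-edge-connected graph that is not strongly $\mathbb Z_3$-connected, with $|V(G)|+|E(G)|$ least—and uses the contraction lemma: contracting any configuration from the library produces a smaller graph that must again be $6$-edge-connected (the few small edge-cuts created by contraction are handled by a splitting-off lemma, as in LTWZ), so $G$ contains none of the configurations, which forces severe structural restrictions.

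The last, and hardest, step is to extract a contradiction from those restrictions, i.e.\ to prove the strongly-connected analogue of the global, configuration-free core of the LTWZ argument. In LTWZ that core is an averaging/counting argument over all mod $3$-orientations realizing a given boundary; upgrading it so that it outputs a \emph{strongly-connected} such orientation is the principal obstacle, and is presumably why Conjecture~\ref{CONJ:<3} remains open. A realistic intermediate target is to run the entire plan under $7$- or $8$-edge-connectivity (which would re-derive Theorem~\ref{<3}), since the reductions above tend to cost one or two units of edge-connectivity; pushing the threshold down to $6$, to match Theorem~\ref{LTWZ2013thm}, is the crux of the conjecture.
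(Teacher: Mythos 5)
The statement you were asked about is a conjecture, not a theorem: the paper does not prove it (and cannot be expected to --- it is attributed to \cite{LTWZ18} as an open problem, and the paper's own contribution, Theorems \ref{thmmain1} and \ref{main2m}, is only partial evidence for it). So there is no proof in the paper to compare against, and your explicit acknowledgement that you are offering a strategy rather than a proof is the correct reading of the situation. It is worth noting that the framework you sketch is essentially the one the paper itself builds: your ``strongly $\Z_3$-connected'' class is exactly the class $\setCZ$ of Definition \ref{DEF:CZ3}; your contraction lemma is Lemma \ref{reduc-lem} (and its closure refinement, Lemma \ref{3closure}); your proposed library of small contractible configurations corresponds to the graphs handled in Section 2 (Lemmas \ref{H}, \ref{bi}, \ref{3t}); and the reduction from $\phi<3$ to strongly-connected modulo $3$-orientations is Theorem \ref{SClemma}. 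With precisely this machinery the best known global threshold is $8$-edge-connectivity (Theorem \ref{8edge}, i.e.\ Theorem 4.2 of \cite{LTWZ18}), which matches your remark that the reductions cost edge-connectivity.

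The genuine gap is the one you name yourself, and it is not a routine verification: the Lov\'asz--Thomassen--Wu--Zhang lifting/induction that proves Theorem \ref{LTWZ2013thm} produces \emph{some} $\beta$-orientation, and there is no known way to force it to be strongly connected at connectivity $6$ (or even $7$); the strong-connectivity requirement interacts badly with the splitting-off and vertex-lifting steps, which can destroy directed reachability even when they preserve edge-connectivity. Two further cautions for your plan: contraction of a configuration in a $6$-edge-connected graph can leave a graph that is only $4$- or $5$-edge-connected, so the minimal-counterexample induction needs the boundary ($\setCZ$-type) version throughout, not just the $\beta\equiv 0$ case; and the examples $\phi(K_6)=3$ and the $5$-edge-connected planar graph mentioned before Conjecture \ref{CONJ:<3} show the threshold cannot be lowered below $6$, so any averaging argument must genuinely use all six edge-disjoint trees' worth of connectivity. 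In short, your proposal is a sensible research program consistent with the paper's partial progress, but its last step is exactly the open content of the conjecture.
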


In this paper, we aim to extend Theorem \ref{Hou12} in the theme of flow index $\phi<3$. Our main result is as follows, providing further evidence to Conjecture \ref{CONJ:<3}.

\begin{theorem}\label{thmmain1}
Let $G$ be a simple graph with $|V(G)|\ge 32$. If $\min\{\delta(G),\delta(G^c)\}\ge 4$,
then\\ $\min\{\phi(G), \phi(G^c)\}<3$.
\end{theorem}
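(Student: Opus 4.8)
\medskip\noindent\textbf{Proof strategy.}\quad
The plan is to reduce, via Theorem~\ref{SClemma}, to producing a strongly connected modulo $3$-orientation in $G$ or in $G^c$, and to build such an orientation by successive contractions. Let $\langle\mathcal{S}\mathbb{Z}_3\rangle$ denote the family of \emph{strongly $\mathbb{Z}_3$-connected} graphs, i.e. those $H$ such that for every $\beta\colon V(H)\to\mathbb{Z}_3$ with $\sum_v\beta(v)\equiv 0\pmod 3$ there is a strongly connected orientation $D$ of $H$ with $d_D^+(v)-d_D^-(v)\equiv\beta(v)\pmod 3$ for all $v$. Two properties of this family are the engine of the argument: it is closed under adding edges, and it admits a contraction reduction — if $H\subseteq G$ and $H\in\langle\mathcal{S}\mathbb{Z}_3\rangle$, then $\phi(G)<3$ if and only if $\phi(G/H)<3$. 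Hence it suffices to exhibit, in $G$ or in $G^c$, a sequence of contractions along $\langle\mathcal{S}\mathbb{Z}_3\rangle$-subgraphs collapsing the graph to a single vertex. We first record a supply of small contractible configurations (from \cite{LTWZ18} together with a handful of new ones): in particular, two vertices joined by sufficiently many parallel edges, and, more generally, a connected graph all of whose edge-multiplicities are large enough, lie in $\langle\mathcal{S}\mathbb{Z}_3\rangle$. Genuinely multigraph gadgets are unavoidable here, since a vertex of degree at most four with a prescribed nonzero $\beta$-value is forced to be a source or a sink, so cannot occur in a strongly $\mathbb{Z}_3$-connected graph.

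\emph{Choosing the side and the hub set.} Write $n=|V(G)|\ge 32$. From $\sum_v\bigl(d_G(v)+d_{G^c}(v)\bigr)=n(n-1)$ we may assume $e(G)\ge e(G^c)$, so $G$ has average degree at least $(n-1)/2$. Fix a constant $\alpha\in(0,1)$ and set $A=\{v: d_G(v)\ge\alpha n\}$ and $B=V(G)\setminus A$; an averaging argument forces $A$ to contain a definite fraction of the vertices, while each $v\in B$ has $d_{G^c}(v)=n-1-d_G(v)>(1-\alpha)n-1$, so $B$ is ``dense in $G^c$''. This complementary trade-off — the vertices that resist contraction in $G$ are near-dominating in $G^c$, and vice versa — is what lets us handle both kinds of vertices within a single scheme.

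\emph{The closure operation.} Define $\mathrm{cl}(G)$ by repeatedly adding an edge $uv$ (typically a new parallel copy of an existing one) whenever the edges already present certify that $u$ and $v$ lie together in some $\langle\mathcal{S}\mathbb{Z}_3\rangle$-subgraph, so that contracting that subgraph identifies $u$ with $v$. Two facts must be established: \emph{soundness}, that $\phi(\mathrm{cl}(G))<3$ if and only if $\phi(G)<3$, where the forward implication is immediate from the contraction reduction and the reverse is the delicate point; and \emph{confluence}, so that $\mathrm{cl}(G)$ does not depend on the order in which edges are added. Running the closure on a connected core $C\subseteq A$ — which the high degrees in $A$, via common-neighbour counting, let us take to span a definite fraction of the vertices and to be internally well-connected — all edge-multiplicities inside $C$ become large, so $\mathrm{cl}(G)[C]\in\langle\mathcal{S}\mathbb{Z}_3\rangle$; contracting it replaces $C$ by a single vertex $z$.

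\emph{Finishing, and the main obstacle.} In the contracted graph each $v\notin C$ keeps its $\ge\delta(G)\ge 4$ incident edges, and when $C$ was chosen large the bulk of them now point to $z$; a short case analysis on the small graph induced by $\{z\}\cup(V(G)\setminus C)$, again using the parallel edges to $z$ together with the small-configuration toolkit, collapses it to a point, giving $\phi(G)<3$. If instead some leftover vertex keeps too few edges to $z$ to be absorbed this way, then that vertex — and, propagating, all the leftover vertices — must send most of its edges among themselves, which means precisely that in $G^c$ they form a near-clique of near-dominating vertices, and the symmetric run of the whole argument inside $G^c$ yields $\phi(G^c)<3$. I expect the real difficulty to lie in three places: proving the reverse direction of soundness of the closure, that adding the new parallel edges never manufactures a spurious strongly connected modulo $3$-orientation; making the toolkit and the choice of the core $C$ robust enough that $C$ always contracts to a point even when $G[A]$ is itself sparse or disconnected, which is exactly what forces the interplay with $G^c$; and tuning the constant $\alpha$ and the various counting estimates finely enough that $|V(G)|\ge 32$ — rather than $44$ — already suffices.
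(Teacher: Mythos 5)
Your proposal is a strategy sketch rather than a proof, and its two load-bearing steps are exactly the places where it breaks. First, the ``closure operation'' you define -- adding new parallel edges $uv$ as certificates and then claiming $\phi(\mathrm{cl}(G))<3$ if and only if $\phi(G)<3$ -- has no sound basis in the direction you actually need: adding edges can only create circular flows, so a flow in $\mathrm{cl}(G)$ gives no flow in $G$ unless every added edge is justified by an actual contraction of a \setCZ-subgraph (or by absorbing a vertex joined by at least $3$ parallel edges to the contracted part, which in turn needs $4$-edge-connectivity, cf.\ Lemmas \ref{3edge} and \ref{3closure}). You flag this ``reverse soundness'' as the delicate point but offer nothing to resolve it; the paper avoids the issue entirely by never adding edges, working only with contractions of \setCZ-subgraphs and with the $3$-closure of Definition \ref{DEF:cl3}. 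Relatedly, your starting point that a dense core $C$ of high-degree vertices acquires ``large edge-multiplicities'' is unsubstantiated: $G$ is simple, parallel edges only arise after a contraction has been performed, and common-neighbour counting inside $C$ by itself produces no \setCZ-certificate. What actually supplies the seed is Lemma \ref{nash} (Nash-Williams): the denser of $G,G^c$ has at least $8(n-1)$ edges when $n\ge 32$, hence contains an $8$-edge-connected subgraph, which is in \setCZ\ by Theorem \ref{8edge} (Lemma \ref{czsize16}); nothing in your averaging argument replaces this. (Also, your side remark that a degree-$4$ vertex with nonzero boundary must be a source or sink is false: $d^+=1$, $d^-=3$ realizes $\beta\equiv 1$.)

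Second, the finishing step is where the real quantitative work of the theorem lives, and your proposal hand-waves it. After contracting your core, the leftover vertex set need not be small (it can be a constant fraction of $V(G)$), so ``a short case analysis on the small graph'' does not apply; and the dichotomy ``either the leftovers are absorbed by $z$ or they form a near-clique in $G^c$ and we switch sides'' is precisely what the paper proves rigorously in Lemma \ref{Ylarge4}, by taking a \emph{maximal} $3$-closure $Y$ over both $G$ and $G^c$ simultaneously and deriving a contradiction from $|\bar Y|\ge 5$ using $K_{m,n}\in\setCZ$ for $m\ge 4$, $n\ge 10$ (Lemma \ref{bi}); that is also where the bound $32$ enters, which your sketch never uses. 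Two further pieces are missing: the case where $G$ (or $G^c$) has an edge-cut of size at most $3$, where the contraction machinery for $\phi<3$ is unavailable and the paper instead shows $G^c\in\setCZ$ directly (Lemma \ref{4edgeconn}); and the fact that the final contracted graph on at most $5$ vertices, which is $4$-edge-connected but need not be in \setCZ\ (e.g.\ $K_6$ shows $5$-edge-connectivity does not suffice for $\phi<3$, and small graphs fail Lemma \ref{totaledge}), still has flow index less than $3$ -- this requires the separate argument of Lemma \ref{n<=5} via Mader splitting and $K_4^*$, not just the ``toolkit''. As it stands the proposal identifies the right general theme (contractible configurations plus the $G$ versus $G^c$ trade-off) but none of the steps that make the theorem true at $n\ge 32$ is actually carried out, and the edge-adding closure at its center would need to be replaced, not repaired.
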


Theorems \ref{LTWZ2013thm} and \ref{Hou12} were proved by using the {\em group connectivity} ideas, which allows flow with boundaries.  Let $G$ be a graph, and let $Z(G,\Z_{3}) =\{\beta: V(G)\rightarrow \Z_{3}\;|$
$\sum_{v\in V(G)}\beta(v) \equiv 0  \pmod {3}\}$. Given a boundary function $\beta\in Z(G,\Z_{3})$, an orientation $D$ of $G$ is called a {\em $\beta$-orientation} if $d^+_D(v)-d^-_D(v)\equiv \beta(v)  \pmod {3}$ for every vertex $v \in V (G)$.
A graph $G$ is {\em  $\Z_{3}$-connected} if $G$ has a $\beta$-orientation for every
$\beta\in Z(G,\Z_{3})$. It follows from the definition that every $\Z_{3}$-connected graph admits a modulo $3$-orientation and hence has a nowhere-zero $3$-flow.
In fact, Hou et al. \cite{HLLZ12} obtained a stronger version of Theorem \ref{Hou12} on  $\Z_3$-group connectivity.
\begin{theorem} \label{Hou2} {\em (Hou et al. \cite{HLLZ12})}
Let $G$ be a simple graph with $|V(G)|\ge 44$. If $\min\{\delta(G),\delta(G^c)\}\ge 4$,
then either $G$ or $G^c$ is $\Z_{3}$-connected.
\end{theorem}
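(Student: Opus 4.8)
The plan is to run the edge-count dichotomy $|E(G)|+|E(G^c)|=\binom n2$ against the standard contraction machinery for $\Z_3$-connectivity. The tools I assume are: (R1) if $H$ is a $\Z_3$-connected subgraph of a graph $K$ and the contraction $K/H$ is $\Z_3$-connected, then $K$ is $\Z_3$-connected — hence a graph with a spanning $\Z_3$-connected subgraph is $\Z_3$-connected, and a graph obtained from a $\Z_3$-connected graph by successively adding a vertex joined to the current graph by at least two edges is $\Z_3$-connected; (R2) adding edges preserves $\Z_3$-connectivity; and, from the group-connectivity literature, the base facts that $K_m$ is $\Z_3$-connected for every $m\ge 5$, that sufficiently large complete multipartite graphs are $\Z_3$-connected, and that a simple graph whose minimum degree is at least a fixed positive fraction of its order (and which is not one of a short list of small exceptions) is $\Z_3$-connected.

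First I would clear away disconnected configurations: if $G^c$ is disconnected then $\delta(G^c)\ge 4$ forces every component of $G^c$ to have at least $5$ vertices, so $G$ contains a spanning complete multipartite subgraph with all parts of size $\ge 5$, which is $\Z_3$-connected, and then $G$ is by (R1)--(R2); symmetrically if $G$ is disconnected. So assume both $G$ and $G^c$ are connected, and, interchanging $G$ and $G^c$ if necessary, $|E(G)|\ge\tfrac12\binom n2$, so that $G$ has average degree at least $(n-1)/2$. The goal becomes to show $G$ is $\Z_3$-connected, falling back to $G^c$ in a residual subcase.

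The heart of the argument is to isolate a $\Z_3$-connected core covering all but a bounded number of vertices and then absorb the rest by (R1). Let $S$ be the set of vertices of small $G$-degree and $T=V(G)\setminus S$; every $v\in S$ has large $G^c$-degree, so in $G^c$ it misses only few vertices. If $S$ is large, then $G^c[S]$ is a complete graph minus a sparse subgraph and hence contains a clique on $\ge 5$ vertices (here $n\ge 44$ enters), giving a $\Z_3$-connected core inside $G^c$; moreover $S$ supplies so many high-degree vertices that $G^c$ is now the graph to work with. If $S$ is small, then after deleting $S$ almost every remaining vertex keeps $G$-degree linear in $n$, so $G-S$ is $\Z_3$-connected by the minimum-degree criterion above (or, when $G$ happens to be clique- or biclique-rich, directly via a $K_m$ or a large complete bipartite subgraph), and this serves as the core inside $G$. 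In either case the leftover vertices are reattached one at a time through (R1), each via at least two edges to the growing core.

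The difficulty — and the reason the threshold is $|V(G)|\ge 44$ rather than something smaller — is the reattachment of the stubborn leftover vertices: a vertex may be joined to the chosen core by fewer than two edges in the graph under consideration, and one must then observe that such a vertex is near-universal in the complementary graph, collect all such vertices together with the core into a large $\Z_3$-connected complete bipartite configuration there, or else argue that there are only boundedly many of them and dispatch them by a direct case analysis. Making the threshold defining $S$, the large/small split, the guaranteed clique and complete-bipartite sizes, the minimum-degree fraction, and the exclusion of the small non-$\Z_3$-connected graphs all mesh at once is exactly the bookkeeping that pins the bound down to $44$.
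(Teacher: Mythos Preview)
The paper does not contain a proof of this statement: Theorem~\ref{Hou2} is quoted verbatim from Hou, Lai, Li and Zhang~\cite{HLLZ12} as background, and no argument for it is given here. There is therefore nothing in the present paper to compare your proposal against.

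As to the proposal itself, it is a plausible outline but not a proof, and several of the tools you invoke are not standard off-the-shelf facts. The assertion that ``a simple graph whose minimum degree is at least a fixed positive fraction of its order is $\Z_3$-connected'' is doing a great deal of unacknowledged work; the known results of this type (e.g.\ Fan--Lai--Luo--Shao--Zhang and related papers) have specific thresholds and exceptional families, and you have not checked that your ``$G-S$'' actually meets any of them. Likewise the final paragraph openly concedes that the arithmetic pinning the bound to $44$ has not been carried out. The original argument in~\cite{HLLZ12} proceeds differently: it uses a Nash--Williams--type density count (as in Lemma~\ref{nash} of the present paper, with $k=2$) to locate a nontrivial subgraph with two edge-disjoint spanning trees, which is therefore $\Z_3$-connected, and then runs a $2$-closure (absorbing any vertex with $\ge 2$ edges into the current $\Z_3$-connected piece) rather than appealing to a global minimum-degree criterion. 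Your sketch is in the right spirit---find a $\Z_3$-connected core, then absorb---but the core is found by edge-density rather than by minimum degree, and that is precisely what makes the bookkeeping close at $44$.
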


Motivated by Theorem \ref{SClemma}, we develop a contractible configuration method to handle the flow index $\phi<3$ problem in this paper, which is analogous to the $\Z_3$-group connectivity.

\begin{definition}\label{DEF:CZ3}
A graph $G$ is {\bf strongly-connected $\Z_{3}$-contractible} if, for every
$\beta\in Z(G,\Z_3)$, there is a strongly-connected orientation $D$ such
that
$d^+_D(v)-d^-_D(v)\equiv \beta(v)  \pmod {3}$ for every vertex $v \in V (G)$. Let  $\setCZ$
denote the family of all strongly-connected $\Z_{3}$-contractible graphs.
\end{definition}

A strongly-connected $\Z_{3}$-contractible graph is called a {\bf $\setCZ$-graph} for convenience.  A $\setCZ$-graph is $\Z_3$-connected by definition; and it has flow index less than $3$ by Theorem \ref{SClemma}. Actually, it was  proved in Theorem 4.2 of \cite{LTWZ18} that $G\in\setCZ$ for every $8$-edge-connected graph $G$.

In this paper, we shall prove a $\setCZ$ version of Theorem \ref{thmmain1}. However, a directed $\setCZ$-property like Theorem \ref{Hou2} fails, and there are some exceptions. A {\bf bad attachment} of a graph $G$ is an induced subgraph $\Gamma$ with $3\le |V(\Gamma)|\le 6$ and there are at most $3|V(\Gamma)|-|E(\Gamma)|$ edges between $V(\Gamma)$ and $V(G)\setminus V(\Gamma)$ in $G$. We will see later (Remark 1 in Section 2) that if a graph $G$ contains a bad attachment, then $G\notin\setCZ$.
We obtain the $\setCZ$ version of Theorem \ref{thmmain1} as follows.
\begin{theorem}\label{main2m}
Let $G$ be a simple graph with $|V(G)|\ge 73$. If $\min\{\delta(G),\delta(G^c)\}\ge 4$,
then one of the following holds:
\begin{itemize}
  \item [(i)] $G\in \setCZ$ or $G^c\in\setCZ$,
  \item [(ii)] both $G$ and $G^c$ contains a bad attachment.
\end{itemize}
Moreover, in case (ii) we have both $\phi(G)<3$ and $\phi(G^c)<3$.
\end{theorem}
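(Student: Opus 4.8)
The plan is to follow the contraction-and-closure strategy announced in the abstract, adapting the $\Z_3$-connectivity argument of Hou et al.\ to the stronger $\setCZ$-property while tracking the small-subgraph exceptions. First I would set up the basic toolkit: establish (or cite) the standard reduction lemmas for the family $\setCZ$---that $\setCZ$ is closed under contraction, that a graph obtained from a $\setCZ$-graph by adding a vertex joined by at least $3$ edges stays in $\setCZ$, that sufficiently dense graphs (e.g.\ $K_n$ for $n$ large, or more generally graphs with high minimum degree on few vertices) lie in $\setCZ$, and the crucial fact that an $8$-edge-connected graph is in $\setCZ$ (Theorem 4.2 of \cite{LTWZ18}). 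I would also record the ``bad attachment'' obstruction: if $\Gamma$ is an induced subgraph on $3\le|V(\Gamma)|\le 6$ vertices with at most $3|V(\Gamma)|-|E(\Gamma)|$ edges to the rest, then contracting everything outside $\Gamma$ leaves a graph that cannot support all boundaries with a strongly-connected orientation, so $G\notin\setCZ$; this is Remark 1.

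Next I would run the dichotomy on the degree sequence. Exactly as in \cite{HLLZ12}, since $\delta(G)\ge 4$ and $\delta(G^c)\ge 4$, for each vertex $v$ we have $4\le d_G(v)\le n-5$; partition $V(G)$ into the ``low-degree in $G$'' set $L=\{v:d_G(v)\le n/2\}$ and its complement $H$ (high-degree in $G$, hence low-degree in $G^c$). One of $|L|,|H|$ is at least $n/2\ge 16$. Say $|H|\ge n/2$. The induced subgraph $G[H]$ is then very dense (each vertex of $H$ has degree $\ge n/2$ into a set of size at most $n$, so degree $\ge |H|/2-O(1)$ inside $G[H]$), and I would argue that $G[H]$ contains a large $8$-edge-connected ``core'' $H_0$: repeatedly delete any vertex cut of size $\le 7$ or any component of an edge cut of size $\le 7$; because every vertex of $G[H]$ has degree close to $|H|/2$ while $|H|\ge 16$, this peeling process can strip off only a bounded number of vertices before stabilizing, leaving $H_0$ with $|H_0|\ge |H|-c$ for an absolute constant $c$, and $H_0$ is $8$-edge-connected hence in $\setCZ$. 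Now I would build $G$ up from $H_0$ by successively adding the remaining vertices of $G$ (first the rest of $H$, then the vertices of $L$), each time checking the vertex being added sends at least $3$ edges to the already-constructed $\setCZ$-subgraph; the threshold $|V(G)|\ge 73$ is chosen precisely so that the counting guarantees this $3$-edge condition for every added vertex---this is where the explicit constant comes from, and it is the main calculation. If every vertex can be absorbed, we get $G\in\setCZ$, giving (i).

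The main obstacle---and the reason case (ii) appears---is a vertex (or small cluster of vertices) that fails the $3$-edge absorption condition: such a cluster is exactly a potential bad attachment, of size $\le 6$ by the degree bounds. Here I would do the delicate case analysis: if only $G$ (say) exhibits such an obstruction, I would show that the symmetric construction succeeds for $G^c$---the point is that a vertex with few edges into the core of $G$ has many edges into $V(G)\setminus(\text{core})$, which translates into good connectivity in $G^c$, so the obstruction cannot occur simultaneously on both sides unless it is a genuine bad attachment in each; to make this work I would bound the number and size of such clusters (at most a bounded number, total size $O(1)$, by double counting against the minimum degree $4$ on both sides) and treat the finitely many configuration types directly. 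When the obstruction is unavoidable on both sides we are in case (ii), and the final sentence of the theorem---$\phi(G)<3$ and $\phi(G^c)<3$ in that case---I would get not from $\setCZ$ but by a direct argument: contract the core (which is in $\setCZ$, so has a suitable strongly-connected orientation extendable to $G$) and handle the bounded bad attachment explicitly, using Theorem \ref{SClemma} to convert a strongly-connected modulo $3$-orientation into the flow-index bound; since the bad attachment has $\le 6$ vertices this is a finite check. I expect the bookkeeping in this last dichotomy---ruling out simultaneous non--bad-attachment obstructions and nailing the constant $73$---to be the technical heart of the proof.
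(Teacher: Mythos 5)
Your overall architecture (find a large contractible core, absorb vertices via the $3$-edge/closure operation, treat the small leftover cluster as the bad attachment) matches the paper's strategy, but two of your key steps do not work as stated. First, the density claim behind your core is false: if $H=\{v: d_G(v)>n/2\}$ and $|H|\approx n/2$, a vertex of $H$ may have almost all of its neighbours in $L$, so its degree inside $G[H]$ can be as small as $|H|-n/2\approx 0$; the bound ``$\ge |H|/2-O(1)$ inside $G[H]$'' does not follow, and hence neither does the peeling argument producing an $8$-edge-connected core of size $|H|-O(1)$. The paper avoids any degree dichotomy: one of $G,G^c$ has at least $n(n-1)/4\ge 8(n-1)$ edges, so by Nash-Williams it contains a subgraph with $8$ edge-disjoint spanning trees, i.e.\ an $8$-edge-connected (hence \setCZ) subgraph on at least $16$ vertices. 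Second, your control of the ``obstruction clusters'' is asserted rather than proved. The paper's technical heart is to take $Y$ a \emph{maximum} $3$-closure of a \setCZ-subgraph, maximized simultaneously over $G$ and $G^c$, and to prove $|V(G)\setminus Y|\le 4$ (Lemma \ref{Ylarge4}); this uses that $K_{m,n}\in\setCZ$ for $m\ge4$, $n\ge10$ and $K_{3,t}^+\in\setCZ$, building bipartite \setCZ-subgraphs of $G^c$ out of the leftover vertices to contradict maximality. ``Double counting against minimum degree $4$ on both sides'' does not by itself bound the total size of the leftover set, and the constant $73$ is never actually derived in your sketch.

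The ``moreover'' clause also cannot be settled by a finite check over all bad attachments of size at most $6$. For example, an induced $K_5$ with exactly five edges to the rest satisfies the definition of a bad attachment, yet contracting everything outside it yields $K_6$, which has no strongly-connected modulo $3$-orientation ($\phi(K_6)=3$), so Lemma \ref{3closure}(i) gives nothing; likewise size-$6$ attachments contract to $7$-vertex graphs, beyond the reach of Lemma \ref{n<=5}. To conclude $\phi(G)<3$ and $\phi(G^c)<3$ in case (ii) one must first show that only very specific attachments can occur (those of Figure \ref{allbad}, whose contractions are small, Eulerian, or $4$-edge-connected with exactly two odd vertices). That exclusion is exactly the content of the paper's Theorem \ref{main2s}: the edge-counting bound (\ref{attachedge}) together with Lemma \ref{L} (each attachment plus one extra edge is in \setCZ, forcing $G$ or $G^c$ into \setCZ\ otherwise). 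Your proposal defers precisely this analysis, so as written it has a genuine gap both in establishing case (ii)'s structure and in deducing the flow-index bound from it.
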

In fact, if case (ii) of Theorem \ref{main2m} occurs, we obtain a more detailed characterization of bad attachment in Theorem \ref{main2s} of Section $3$. Also, the graph obtained by deleting the bad attachment is a special kind of contractible graph for $\phi<3$ property to be introduced in Section 2.  Furthermore, if we impose the minimal degree condition to $\min\{\delta(G),\delta(G^c)\}\ge 5$, then an easy counting argument shows that case (ii) of Theorem \ref{main2m} cannot happen. (See Theorem \ref{main2s} below for more details.) Thus we have the following corollary.
\begin{corollary}
Let  $G$ be a simple graph with $|V(G)|\ge 73$. If $\min\{\delta(G),\delta(G^c)\}\ge 5$,
then $G\in \setCZ$ or $G^c\in\setCZ$.
\end{corollary}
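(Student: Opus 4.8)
The plan is to derive the corollary directly from Theorem \ref{main2m}. Assume $G$ is a simple graph with $|V(G)|\ge 73$ and $\min\{\delta(G),\delta(G^c)\}\ge 5$. Since $5>4$, Theorem \ref{main2m} applies, so either conclusion (i) holds -- in which case we are done -- or conclusion (ii) holds, meaning both $G$ and $G^c$ contain a bad attachment. Thus the entire task reduces to showing that the minimum degree hypothesis $\min\{\delta(G),\delta(G^c)\}\ge 5$ rules out case (ii); in fact it suffices to show that a simple graph $G$ on at least $73$ vertices with $\delta(G)\ge 5$ and $\delta(G^c)\ge 5$ cannot contain a bad attachment at all.

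The core of the argument is an easy counting estimate, as the paper itself indicates. Suppose $\Gamma$ is a bad attachment of $G$, so $\Gamma$ is an induced subgraph with $3\le |V(\Gamma)|\le 6$ and the number of edges $e(\Gamma, V(G)\setminus V(\Gamma))$ between $\Gamma$ and the rest is at most $3|V(\Gamma)|-|E(\Gamma)|$. Write $t=|V(\Gamma)|$. On one hand, each vertex $v\in V(\Gamma)$ has $d_G(v)\ge \delta(G)\ge 5$, and its neighbors lie either inside $\Gamma$ (at most $t-1$ of them, since $G$ is simple) or outside. Summing over $v\in V(\Gamma)$ gives
\[
5t \;\le\; \sum_{v\in V(\Gamma)} d_G(v) \;=\; 2|E(\Gamma)| + e(\Gamma, V(G)\setminus V(\Gamma)) \;\le\; 2|E(\Gamma)| + 3t - |E(\Gamma)| \;=\; |E(\Gamma)| + 3t,
\]
so $|E(\Gamma)|\ge 2t$. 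But since $G$ is simple, $|E(\Gamma)|\le \binom{t}{2}=\frac{t(t-1)}{2}$, forcing $\frac{t(t-1)}{2}\ge 2t$, i.e.\ $t\ge 5$. So a bad attachment in a graph of minimum degree $5$ must have $t\in\{5,6\}$, and moreover must be very dense: $|E(\Gamma)|\ge 2t$ means $\Gamma$ has at least $10$ edges when $t=5$ (out of a possible $10$) and at least $12$ edges when $t=6$ (out of $15$). In particular for $t=5$, $\Gamma$ must be $K_5$, and then every vertex of $\Gamma$ already has $4$ neighbors inside, so $e(\Gamma,\cdot)\le 3\cdot 5-10=5$; combined with $\delta(G)\ge 5$ this still allows exactly one outside edge per vertex on average, so this case is not immediately excluded by $G$ alone and we must also use $\delta(G^c)\ge 5$.

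The remaining point is to bring in the complementary degree condition to finish off the dense small cases. If $\Gamma$ is a bad attachment of $G$ with $V(\Gamma)=S$, $|S|=t\in\{5,6\}$, $|E(\Gamma)|\ge 2t$, then in $G^c$ the vertices of $S$ have few neighbors: for $v\in S$, $d_{G^c}(v)=|V(G)|-1-d_G(v)$, and the number of $G^c$-neighbors of $v$ inside $S$ is $(t-1)-d_\Gamma(v)$. Summing, the number of $G^c$-edges from $S$ to $V(G)\setminus S$ is $\sum_{v\in S}(|V(G)|-1-d_G(v)) - 2\big(\binom{t}{2}-|E(\Gamma)|\big) = t(|V(G)|-1) - \big(2|E(\Gamma)|+e(\Gamma,\cdot)\big) - t(t-1) + 2|E(\Gamma)| = t(|V(G)|-t) - e_G(\Gamma,\cdot)$. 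Using $|V(G)|\ge 73$, $t\le 6$ and $e_G(\Gamma,\cdot)\le 3t-|E(\Gamma)|\le 3t$, this edge-count is at least $t(73-6)-3t = 64t \gg 3t \ge 3t - |E(\Gamma^c)|$ for these small $t$, so $S$ cannot induce a bad attachment in $G^c$ either; since $G^c$ is simple and $3\le t\le 6$, one checks the bad-attachment inequality $e_{G^c}(S,\cdot)\le 3t-|E(G^c[S])|$ is violated with enormous room to spare. Hence case (ii) of Theorem \ref{main2m} is impossible under $\min\{\delta(G),\delta(G^c)\}\ge 5$ and $|V(G)|\ge 73$, so conclusion (i) must hold, i.e.\ $G\in\setCZ$ or $G^c\in\setCZ$. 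The main obstacle here is purely bookkeeping: getting the counting of cross-edges right simultaneously for $G$ and $G^c$ on the same vertex set $S$, and making sure the small cases $t=5,6$ are genuinely covered rather than silently assumed away; once the two inequalities (one from $\delta(G)\ge 5$, one from $\delta(G^c)\ge 5$ together with $|V(G)|\ge 73$) are both written down, the contradiction is immediate.
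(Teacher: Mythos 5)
Your counting in the first half is fine: from $\delta(G)\ge 5$ and the bad-attachment bound $e_G(\Gamma,\overline{V(\Gamma)})\le 3t-|E(\Gamma)|$ you correctly get $|E(\Gamma)|\ge 2t$, hence $t\in\{5,6\}$ and a very dense, nearly isolated attachment (e.g.\ $K_5$ with at most $5$ outgoing edges), and you correctly observe that such attachments are \emph{not} excluded by the degree hypotheses. But the final step is a genuine logical gap: you only show that the \emph{same} vertex set $S$ that is bad in $G$ cannot be bad in $G^c$, and from this you conclude that case (ii) of Theorem \ref{main2m} is impossible. Case (ii) merely asserts that $G$ and $G^c$ each contain \emph{some} bad attachment; nothing forces the two attachments to live on the same vertex set, so ruling out $S$ as a bad attachment of $G^c$ rules out nothing. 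As written, the argument does not reach the conclusion, and it cannot be repaired by sharpening the one-set computation, since (as you yourself note) $\delta(G)\ge 5$ together with $\delta(G^c)\ge 5$ and $|V(G)|\ge 73$ does not forbid $G$ from containing a $K_5$-type bad attachment at all.

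Two ways to close the gap. The paper's own route does not argue from the statement of Theorem \ref{main2m} but from the refined Theorem \ref{main2s}: in its cases (ii) and (iii) the bad attachments are the explicit graphs of Figure \ref{allbad}, every one of which forces a vertex of degree exactly $4$ in the host graph (at most $2$ internal plus at most $2$ cross edges, or the tight counts in the proof), which is impossible when $\delta\ge 5$; equivalently, with $\delta=5$ the inequality $\delta|\bar{Y}|-|\bar{Y}|(|\bar{Y}|-1)\le 2|\bar{Y}|$ in that proof forces $|\bar{Y}|=4$ with $G[\bar{Y}]\cong K_4$, which lands in the $L_2$ case and gives membership in $\setCZ$ directly. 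Alternatively, your elementary strategy can be salvaged from Theorem \ref{main2m} itself, but you must play the two attachments against each other: if $S_1$ is bad in $G$ and $S_2$ is bad in $G^c$, your own estimates give $e_G(S_1,\overline{S_1})\le 3t_1-|E_G(S_1)|\le 6$, so every $v\in S_1$ has $d_G(v)\le 11$, while symmetrically $e_{G^c}(S_2,\overline{S_2})\le 6$ gives $d_G(v)\ge |V(G)|-12\ge 61$ for every $v\in S_2$; hence $S_1\cap S_2=\emptyset$, and then $e_G(S_1,S_2)\le e_G(S_1,\overline{S_1})\le 6$ contradicts $e_G(S_2,S_1)\ge |S_1||S_2|-e_{G^c}(S_2,\overline{S_2})\ge 25-6=19$. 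That two-set comparison is the missing ingredient; the one-set check you performed does not substitute for it.
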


In the next section, we will present some preliminaries. The proofs of Theorems \ref{thmmain1} and \ref{main2m} will be given in Section 3.  We end this section with a few more notation.

{\bf Notation.}
 A vertex of degree at least $k$ is called a {\em $k^+$-vertex}. Let $X$, $Y$ be disjoint subsets of vertices of graph $G$. We denote the set of edges between $X$ and $Y$ in $G$ by $E_G(X,Y)$, and let $e_G(X,Y)=|E_G(X,Y)|$. When $X=\{x\}$ or $Y=\{y\}$, we use $E_G(x,Y)$, $E_G(X,y)$, $E_G(x,y)$ and $E_G(u)=E_G(\{u\},V(G)\setminus\{u\})$  for short.
For a vertex set $A\subseteq V(G)$, we denote by $G/A$ the graph obtained from $G$ by identifying the vertices of $A$ into a single vertex and deleting the resulting loops. Moreover, we use $G/H$ for $G/V(H)$ when $H$ is a connected subgraph of $G$.

\section{Preliminaries}

The following observation comes straightly from Definition \ref{DEF:CZ3} of $\setCZ$-graph. This indicates that the $\setCZ$-property is closed under contraction and adding edges. It would also be useful to determine that some graphs are not in $\setCZ$.
\begin{observation}\label{inandnotin}
Let $x, y$ be two vertices of $G$. If $G\in \setCZ$, then $G+xy\in \setCZ$ and $G/\{x,y\}\in \setCZ$. Conversely, if there is a subset $X\subsetneq V(G)$ of vertices such that $G/X\notin\setCZ$, then $G\notin\setCZ$.
\end{observation}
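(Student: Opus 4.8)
My plan is to unwind Definition~\ref{DEF:CZ3} directly, proving the three assertions in turn and obtaining the last one by iterating the contraction case; I expect the argument to be routine, the only care needed being the mod-$3$ bookkeeping of the boundary function and the (elementary) stability of strong connectivity under the two operations.

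\emph{Adding an edge.} Assume $G\in\setCZ$ and let $x\ne y$ be vertices of $G$. Since $V(G+xy)=V(G)$, any $\beta\in Z(G+xy,\Z_3)$ also lies in $Z(G,\Z_3)$. I would ``pre-pay'' for the new edge by passing to the function $\beta'$ (computed in $\Z_3$) with $\beta'(x)=\beta(x)-1$, $\beta'(y)=\beta(y)+1$, and $\beta'$ unchanged elsewhere; then $\beta'\in Z(G,\Z_3)$, so $G$ has a strongly-connected $\beta'$-orientation $D'$. Orienting the new edge from $x$ to $y$ changes $d^+-d^-$ by $+1$ at $x$, by $-1$ at $y$, and by nothing elsewhere, thereby realizing $\beta$; and adding an arc to a strongly-connected digraph keeps it strongly connected, so $G+xy\in\setCZ$.

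\emph{Contracting a pair.} Assume $G\in\setCZ$, let $z$ be the vertex of $H:=G/\{x,y\}$ obtained by identifying $x$ and $y$ (the case $|V(H)|=1$ being trivial), and fix $\beta\in Z(H,\Z_3)$. I would lift $\beta$ to $\beta^*\in Z(G,\Z_3)$ by setting $\beta^*(x)=\beta(z)$, $\beta^*(y)=0$, and $\beta^*(v)=\beta(v)$ for $v\notin\{x,y\}$; the zero-sum condition is inherited. A strongly-connected $\beta^*$-orientation $D^*$ of $G$ induces an orientation $D$ of $H$ once the loops created at $z$ are deleted, and the step to verify is that the arcs of $D^*$ joining $x$ and $y$ cancel in the netflow at $z$, so that $d^+_D(z)-d^-_D(z)\equiv\beta^*(x)+\beta^*(y)=\beta(z)\pmod 3$ while all other vertices are unaffected; since every directed walk of $D^*$ projects onto a directed walk of $D$, strong connectivity survives, and $H\in\setCZ$.

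\emph{The converse.} Given $X\subsetneq V(G)$ with $G/X\notin\setCZ$, I would write $G/X$ as a sequence of pair-identifications, $G/X=(\cdots((G/\{x_1,x_2\})/\{z,x_3\})\cdots)$, and apply the contraction case repeatedly: were $G$ in $\setCZ$, so would be $G/X$, a contradiction; hence $G\notin\setCZ$. The only point of real substance is in the middle step --- correctly accounting for the contribution of the arcs joining $x$ and $y$ to $d^+-d^-$ at the identified vertex --- but this reduces to a one-line degree count, so I do not anticipate a genuine obstacle.
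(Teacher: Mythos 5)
Your proof is correct and is exactly the routine unwinding of Definition \ref{DEF:CZ3} that the paper has in mind when it states the observation as immediate (the paper gives no separate proof): shift the boundary to pre-pay for the added arc, lift the boundary across the identification and note that arcs between $x$ and $y$ cancel at the contracted vertex while strong connectivity is preserved, and iterate pair-contractions for general $X$. No gaps worth noting beyond the trivial cases ($|X|\le 1$, $|V(H)|=1$) which you either handle or which are vacuous.
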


\subsection{Contractible configurations and $3$-closure operations}

\begin{lemma}
\label{reduc-lem}
Let $G$ be a connected graph with $\beta\in Z(G,\Z_3)$, and $H$ a subgraph of $G$ and $G'=G/H$. Define a boundary function
$\beta'$ of $G'$ as follows.
\begin{equation*}\beta'(v)=
    \left\{
    \begin{array}{ll}
    \beta (v), & \text{if}~v\in V(G/H)\setminus\{v_H\},\\
    \sum\limits_{x\in V(H)}\beta(x), &\text{if}~v=v_H,
    \end{array}
    \right.
\end{equation*}
where $v_H$ denotes the vertex by contracting $H$ in $G'$. Then $\beta'\in Z(G',\Z_3)$.

If $H\in \setCZ$, then every strongly-connected $\beta'$-orientation of $G'$ can be extended to a strongly-connected $\beta$-orientation of
$G$. In particular, each of the following statements holds.
\begin{enumerate}
\item[(i)] If $H\in \setCZ$ and $\phi(G/H)<3$, then $\phi(G)<3$.
\item[(ii)] If $H\in \setCZ$ and $G/H\in \setCZ$, then $G\in \setCZ$.
\end{enumerate}
\end{lemma}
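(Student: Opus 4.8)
The plan is to first verify that $\beta'\in Z(G',\Z_3)$, which is a one-line computation: $\sum_{v\in V(G')}\beta'(v)=\sum_{v\in V(G)}\beta(v)\equiv 0\pmod 3$ since contracting $H$ merely regroups the same summands. The heart of the lemma is the extension statement, so I would fix a strongly-connected $\beta'$-orientation $D'$ of $G'=G/H$. This induces an orientation of all edges of $G$ not lying inside $H$ (those edges survive in $G'$, possibly incident to $v_H$). It remains to orient $E(H)$; I would do this by applying the hypothesis $H\in\setCZ$ to a suitably chosen boundary function $\beta_H$ on $V(H)$. The natural choice is $\beta_H(x)\equiv\beta(x)-\big(d^+_{D'\restriction G}(x)-d^-_{D'\restriction G}(x)\big)\pmod 3$ for $x\in V(H)$, where the second term counts the net contribution at $x$ from edges already oriented (those between $V(H)$ and $V(G)\setminus V(H)$). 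One checks $\sum_{x\in V(H)}\beta_H(x)\equiv\beta'(v_H)-\big(d^+_{D'}(v_H)-d^-_{D'}(v_H)\big)\equiv 0\pmod 3$, using that $D'$ is a $\beta'$-orientation, so $\beta_H\in Z(H,\Z_3)$ and the $\setCZ$-property of $H$ yields a strongly-connected $\beta_H$-orientation $D_H$ of $H$. Combining $D_H$ with the induced orientation gives an orientation $D$ of $G$; the congruence conditions match at every vertex by construction, so $D$ is a $\beta$-orientation.

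The remaining point, and the one I expect to be the main (though still modest) obstacle, is to argue that the combined orientation $D$ is strongly connected. Here I would use that $D'$ is strongly connected on $G'$ and $D_H$ is strongly connected on $H$: for any two vertices $u,w\in V(G)$, lift a directed $u$--$w$ path in $D'$ to $G$, and whenever the lifted walk ``enters'' the contracted vertex $v_H$ along some arc into $x\in V(H)$ and ``leaves'' along an arc out of $y\in V(H)$, splice in a directed $x$--$y$ path inside $D_H$ (which exists by strong connectivity of $D_H$); this produces a directed $u$--$w$ walk in $D$. Since $u,w$ were arbitrary, $D$ is strongly connected. This completes the proof of the extension claim.

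Finally, statements (i) and (ii) are immediate consequences. For (ii): if $G/H\in\setCZ$, then for every $\beta\in Z(G,\Z_3)$ the induced $\beta'$ lies in $Z(G',\Z_3)$, so $G/H$ has a strongly-connected $\beta'$-orientation, which extends to a strongly-connected $\beta$-orientation of $G$; as $\beta$ was arbitrary, $G\in\setCZ$. For (i): if $\phi(G/H)<3$ then by Theorem \ref{SClemma} $G/H$ has a strongly-connected modulo $3$-orientation, i.e. a strongly-connected $\beta'$-orientation for $\beta'\equiv 0$; the corresponding $\beta$ is the zero function, so the extension gives a strongly-connected modulo $3$-orientation of $G$, whence $\phi(G)<3$ again by Theorem \ref{SClemma}. \qed
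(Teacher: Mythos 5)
Your proposal is correct and follows essentially the same route as the paper: the same induced boundary on $H$ (the paper's $\beta_2=\beta-\beta_1$ is your $\beta_H$), the same appeal to $H\in\setCZ$ for a strongly-connected orientation of $H$, and the same splicing/lifting argument for strong connectivity of the combined orientation, with (i) and (ii) deduced exactly as in the paper. The only detail the paper adds is that when $H$ is not induced, the edges of $G[V(H)]$ outside $E(H)$ (which vanish as loops in $G/H$) are oriented arbitrarily first and absorbed into the induced boundary, a one-line adjustment fully compatible with your argument.
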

\begin{proof} Since $\sum_{x\in V(G')}\beta'(x)=\sum_{x\in V(G)\setminus V(H)}+\sum_{x\in V(H)}\beta(x)\equiv 0\pmod3$, we have $\beta'\in Z(G',\Z_3)$.
For a strongly-connected $\beta'$-orientation $D'$ of $G'$, it results a $\beta_1$-orientation $D_1$ of $G-E(H)$ (we may arbitrarily orient the edges in $E(G[V(H)])\setminus E(H)$ here). Define a function $\beta_2: V(H)\mapsto \Z_3$ by $\beta_2(v)=\beta(v)-\beta_1(v)$ for each $v\in V(H)$. Then $\sum_{v\in V(H)}\beta_2(x)=\sum_{v\in V(H)}\beta(v)-\sum_{v\in V(H)}\beta_1(v)=\beta'(v_H)-(d_{D'}^+(v_H)-d_{D'}^-(v_H))\equiv 0\pmod3$, and so $\beta_2\in Z(H,\Z_3)$. Since $H\in \setCZ$, there is a strongly-connected $\beta_2$-orientation $D_2$ of $H$. Now $D_1\cup D_2$ is a $\beta$-orientation of $G$. Since both $D_2$ and $D'=(D_1\cup D_2)/D_2$ are strongly-connected, $D_1\cup D_2$ is strongly-connected.

(i) If $H\in \setCZ$, then a strongly-connected modulo $3$-orientation of $G/H$ can be extended to $G$. Hence (i) follows from Theorem \ref{SClemma}.

(ii) Since $\beta$ is arbitrary, $G\in \setCZ$ by definition.
\end{proof}

Since a graph with $3$-edge-cuts cannot have a strongly-connected modulo $3$-orientation, it has flow index at least $3$ by Theorem \ref{SClemma}. So our study of flow index $\phi<3$ only focuses on $4$-edge-connected graphs. A graph is called {\em ($\phi<3$)-contractible}  if for every $4$-edge-connected supergraph $G$ containing $H$ as a subgraph, $\phi(G)<3$  if and only if $\phi(G/H)<3$. Clearly, a $\setCZ$-graph is ($\phi<3$)-contractible by (i) of Lemma \ref{reduc-lem}. We will show below that a wider class of graphs is also ($\phi<3$)-contractible.

\begin{lemma}\label{dipath}
{\em(\cite{LTWZ18})}
Let $G$ be a $2$-edge-connected graph, and $e=xy$ an edge of $G$. If $G/e$ has a strongly-connected orientation $D'$, then $D'$ can be extended to a strongly-connected orientation $D$ of $G$.
\end{lemma}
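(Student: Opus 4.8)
The plan is to reduce the statement to a routine observation about how a single edge can be added back to a strongly-connected digraph. Let $D'$ be the given strongly-connected orientation of $G/e$, where $e=xy$ and let $z$ be the vertex of $G/e$ obtained by identifying $x$ and $y$. Every arc of $D'$ corresponds to an arc of $G-e$ (an arc incident with $z$ corresponds to an arc incident with either $x$ or $y$), so $D'$ already prescribes an orientation $D_0$ of all edges of $G$ except $e$ itself. It remains to orient $e$, say as $x\to y$, and to verify that the resulting orientation $D$ of $G$ is strongly-connected.

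The key step is to check strong connectivity of $D$. Since $D'$ is strongly-connected, for any ordered pair of vertices $u,v\in V(G)\setminus\{x,y\}$ there is a directed $u$--$v$ walk in $D'$; lifting this walk to $G$, whenever it passes through $z$ we can route it through $x$ or through $y$ and, if necessary, use the arc $e$ oriented $x\to y$ to pass from the $x$-side to the $y$-side — the one direction we might need but not have is $y\to x$, and here the $2$-edge-connectedness of $G$ is essential: $G-e$ is connected (indeed, removing $e$ from the $2$-edge-connected graph $G$ leaves a connected graph), so in $D'$ there is already a directed walk from (a lift of) $y$ back to (a lift of) $x$ not using $e$; concretely, in $D'$ there is a closed directed walk through $z$, which in $G$ breaks into a directed $x$--$y$ portion and a directed $y$--$x$ portion, and at least one of these must avoid $e$ since $e$ appears at most once. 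Using these pieces one shows every vertex can reach $x$ and be reached from $y$ in $D$, and combined with the reachability inside $D'$ this gives strong connectivity of $D$. I would organize this as: (1) $x$ is reachable from $y$ in $D$ (via the lifted closed walk through $z$ in $D'$, dropping the copy of $e$ if it occurs); (2) hence $D$ restricted to any vertex set remains strongly reachable by concatenating with lifts of $D'$-walks; (3) conclude $D$ is strongly-connected.

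The main obstacle, and the only place any care is needed, is the bookkeeping in the lift: an arc incident with $z$ in $D'$ must be assigned to $x$ or to $y$ according to which endpoint of the original edge it came from, and one must make sure the directed walks produced in $G$ are genuinely consistent with that fixed assignment rather than switching arbitrarily between $x$ and $y$. This is exactly the role of the extra arc $e$ (to cross from the $x$-side to the $y$-side) together with $2$-edge-connectedness (to cross back from the $y$-side to the $x$-side without reusing $e$). Everything else is a short verification, and since the statement is quoted from \cite{LTWZ18} I would either cite it directly or include this one-paragraph argument for completeness.
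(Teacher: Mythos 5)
The paper itself gives no proof of this lemma (it is quoted directly from \cite{LTWZ18}), so your argument has to stand on its own, and it does not: there is a genuine gap at the central step. You fix the orientation of $e$ as $x\to y$ at the outset and then claim that $D'$ already yields a directed walk from (a lift of) $y$ back to (a lift of) $x$, justified by a closed directed walk through $z$ that supposedly breaks into an $x$--$y$ portion and a $y$--$x$ portion, one of which avoids $e$. Neither part of this works. First, arcs of $D'$ never use $e$ at all (it is contracted), and a closed directed walk through $z$ may lift to a closed walk in $G-e$ that leaves and re-enters $z$ through edges both incident with $x$, so it gives no directed connection between $x$ and $y$ whatsoever. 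Second, the reachability you need can simply fail, so the direction of $e$ is not a free choice: let $G$ be the $4$-cycle $x\,a\,b\,y\,x$ with $e=xy$; then $G/e$ is a triangle, and for the strongly-connected orientation $z\to a\to b\to z$ the lifted orientation of $G-e$ is $x\to a\to b\to y$, in which $y$ reaches nothing. Orienting $e$ as $x\to y$ leaves $y$ with no out-arc; the only valid extension orients $e$ as $y\to x$. Mere connectedness of $G-e$, which is all you actually extract from $2$-edge-connectedness, cannot produce directed reachability.

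What is true, and what the standard argument establishes, is the disjunction: in the lifted orientation $D_0$ of $G-e$, at least one of ``$x$ reaches $y$'' or ``$y$ reaches $x$'' holds. Indeed, if neither holds, let $A$ and $B$ be the sets of vertices reachable in $D_0$ from $x$ and from $y$, respectively; no arc of $D_0$ leaves $A$ or leaves $B$; strong connectivity of $D'$ forces $A\cup B=V(G)$ (every vertex is reachable from $z$), and any vertex of $A\cap B$ could reach neither $x$ nor $y$ (otherwise one of the forbidden reachabilities would occur), hence could not reach $z$, so $A\cap B=\emptyset$; but then $G-e$ has no edge between $A$ and $B$, making $e$ a cut edge of $G$ and contradicting $2$-edge-connectedness. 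With the disjunction in hand, orient $e$ from $y$ to $x$ if $x$ reaches $y$ in $D_0$, and from $x$ to $y$ otherwise; then $x$ and $y$ reach each other in $D$, and your lifting-and-patching of $D'$-walks (inserting either the arc $e$ or the $D_0$-path between $x$ and $y$ at each passage through $z$) goes through verbatim. So the patching framework in your write-up is fine; what is missing is the correct, $D'$-dependent choice of orientation for $e$ and the cut argument that genuinely uses $2$-edge-connectedness.
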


\begin{lemma}\label{3edge}
\label{parallelledges}
Let $G$ be a $4$-edge-connected graph with $\beta\in Z(G,\Z_3)$ and $x, y$ be a pair of vertices joined by a set $E(x,y)$ of at least $3$ parallel edges.
Let $G' = G/E(x,y)$ and $\beta'$ be the resulting $\Z_3$ boundary function, where $\beta'(v)=\beta(v)$ for any $v\in V(G)\setminus\{x,y\}$, and
$\beta'(w) \equiv \beta(x)+\beta(y)\pmod 3$ for the contracted vertex $w$.
If $G'$ has a strongly-connected $\beta'$-orientation $D'$, then $D'$ can be extended to a strongly-connected $\beta$-orientation $D$ of $G$.
\end{lemma}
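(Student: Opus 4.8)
The plan is to reduce the claim about $G$ to the already-proven Lemma~\ref{dipath} by exploiting the extra parallel edges between $x$ and $y$. First I would observe that since $G$ is $4$-edge-connected and $x,y$ are joined by at least three parallel edges, the graph $G'=G/E(x,y)$ is still $2$-edge-connected (in fact $4$-edge-connected after contraction), so Lemma~\ref{dipath} and its hypotheses make sense for $G'$. The key idea is that a strongly-connected $\beta'$-orientation $D'$ of $G'$ already tells us how every edge of $G$ outside $E(x,y)$ should be oriented; what remains is to orient the parallel edges in $E(x,y)$ and split the contracted vertex $w$ back into $x$ and $y$ so that (a) the $\beta$-constraint is met at both $x$ and $y$ and (b) strong connectivity is preserved.

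The main step is the following. Write $t=|E(x,y)|\ge 3$ and let $D_0$ be the partial orientation of $G$ obtained from $D'$ on $E(G)\setminus E(x,y)$. At the split vertex, the edges incident with $w$ in $D'$ are distributed among $x$ and $y$; let $a = d^+_{D_0}(x)-d^-_{D_0}(x)$ and $b = d^+_{D_0}(y)-d^-_{D_0}(y)$ be the ``partial'' netdegrees (counting only non-parallel edges), and note $a+b \equiv \beta'(w) \equiv \beta(x)+\beta(y)\pmod 3$ from the fact that $D'$ is a $\beta'$-orientation. I need to orient the $t$ parallel edges, say $p$ of them from $x$ to $y$ and $q=t-p$ from $y$ to $x$, so that $a+(p-q)\equiv \beta(x)\pmod 3$; this is one linear congruence in $p$ modulo $3$, and since $p$ ranges over $\{0,1,\dots,t\}$ with $t\ge 3$, I can always pick $p$ hitting the required residue class, and automatically $b+(q-p)\equiv\beta(y)\pmod 3$ by the sum condition. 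So the $\beta$-constraint at both endpoints is satisfiable, and I'd like to moreover choose $p$ with $1\le p\le t-1$ if possible so that at least one parallel edge goes each way — this is where $t\ge 3$ gives slack: among $p\in\{0,\dots,t\}$ there are at least $\lceil (t+1)/3\rceil \ge 2$ choices in the right residue class, hence at least one with $0<p<t$, unless forced otherwise, and the degenerate all-one-direction cases can be handled separately.

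To get strong connectivity I would argue as follows. If we have chosen the orientation of $E(x,y)$ with at least one edge each way, then contracting just a single $x\to y$ edge $e$ in the resulting orientation $D$ of $G$ recovers an orientation of $G/e$; and since $G$ is $4$-edge-connected hence $2$-edge-connected, by Lemma~\ref{dipath} it suffices to show that $G/e$ has a strongly-connected orientation extending what we have — but $G/e$ still contains $w$ (the merged $x,y$) and its orientation is exactly $D'$ together with a back edge $y\to x$, i.e.\ $D'$ with a loop removed, which is strongly-connected because $D'$ is. Then Lemma~\ref{dipath} lifts this to a strongly-connected orientation of $G$ agreeing with $D$ on all edges except possibly $e$; re-choosing the orientation of $e$ to match $D$ if needed (which Lemma~\ref{dipath}'s proof allows, since the added edge can go either way as long as one direction works) gives the desired strongly-connected $\beta$-orientation $D$ of $G$. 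In the remaining case where the congruence forces all $t$ parallel edges in one direction modulo the residue constraint — which because $t\ge 3$ only happens when the required count is $0$ or $t$ — I would instead use that $G$ minus two of the parallel edges is still $2$-edge-connected and apply Lemma~\ref{dipath} directly to $G$ with the pair of parallel edges deleted, then add the two deleted edges back, one in each direction, which changes no netdegree modulo anything and only helps strong connectivity.

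The hardest part will be the bookkeeping in the strong-connectivity argument: making sure that after splitting $w$ into $x$ and $y$, the directed paths witnessing strong connectivity of $D'$ through $w$ can be rerouted through the newly oriented parallel edges without creating a ``sink side'' or ``source side'' at $x$ or $y$. The parallel-edge slack ($t\ge 3$, so at least one edge each way after a good choice) is precisely what prevents this, and I expect the cleanest write-up to route everything through Lemma~\ref{dipath} applied to $G/e$ for a suitable single contracted parallel edge $e$, rather than re-proving an ear-decomposition argument from scratch.
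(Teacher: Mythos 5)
Your overall plan (lean on Lemma~\ref{dipath} and use the slack from $|E(x,y)|\ge 3$) is in the same spirit as the paper's proof, and your congruence bookkeeping for choosing how many parallel edges go each way is fine, but both branches of your strong-connectivity argument have genuine gaps. In the main case you apply Lemma~\ref{dipath} to $G$ and a single parallel edge $e$, and claim it returns an orientation ``agreeing with $D$ on all edges except possibly $e$'' and that ``the added edge can go either way''. Neither is available: contracting $e$ kills the whole class $E(x,y)$ (the other parallel edges become loops and are deleted), so the lemma's extension orients all $t$ parallel edges, not just $e$, with no control on your part; and the statement of Lemma~\ref{dipath} only guarantees that \emph{some} orientation of the lifted edge works --- reversing one edge of a strongly-connected orientation can destroy strong connectivity (a directed triangle already shows this), so your appeal to what ``the lemma's proof allows'' is an unproven strengthening. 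This case could in fact be closed without Lemma~\ref{dipath}: once at least one parallel edge goes each way, every directed path of $D'$ through $w$ lifts to $G$ by inserting an edge of the digon between $x$ and $y$; you mention this rerouting at the end but never carry it out, resting the case instead on the flawed application.

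The degenerate case ($p$ forced into $\{0,t\}$, which really occurs, e.g.\ $t=3$ with required correction $0\pmod 3$) is the more serious gap: you delete two parallel edges $e_1,e_2$, apply Lemma~\ref{dipath} to $G-e_1-e_2$, and add $e_1,e_2$ back as a digon, ``which changes no netdegree''. True, but the Lemma~\ref{dipath} extension also orients the remaining $t-2$ parallel edges arbitrarily; for $t=3$ the surviving parallel edge contributes $\pm1$ to the net degree at $x$ where you needed $0$, and the digon cannot repair this, so the $\beta$-condition at $x$ and $y$ fails. The paper avoids both problems by reversing the order of operations: first extend $D'$ by Lemma~\ref{dipath} to a strongly-connected orientation of the $2$-edge-connected graph $G-e_1-e_2$ (strong connectivity is then settled once and for all, since adding edges never destroys it), and only afterwards orient the two reserved edges $e_1,e_2$ to realize the boundary correction, which is always of the form $(\delta,-\delta)$ with $\delta\in\{0,\pm1\}$ and hence achievable by two parallel edges. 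Reworking your argument in that order (or writing out the digon-lifting argument in your main case and fixing the degenerate case as above) closes the proof.
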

\begin{proof} Let $e_1, e_2$ be two distinct parallel edges in $E(x,y)$. Then $G-e_1-e_2$ is 2-edge-connected since $G$ is 4-edge-connected, and hence we can extend $D'$ to a strongly-connected orientation of $G-e_1-e_2$ by Lemma \ref{dipath}. Note that two parallel edges $e_1, e_2$ are enough to modify the boundaries of the end vertices $x, y$. Now we appropriately orient $e_1, e_2$ to modify the boundary $\beta(x), \beta(y)$. This results a strongly-connected $\beta$-orientation $D$ of $G$.
\end{proof}

In particular, Lemma \ref{parallelledges} indicates that the graph formed by three or more parallel edges is ($\phi<3$)-contractible.

\begin{definition}\label{DEF:cl3}
Let $H$ be a subgraph of $G$. The {\bf $3$-closure} of $H$ in $G$, denoted by $cl_3(H)$, is the unique maximal induced subgraph of $G$ that contains $H$ such that $V(cl_3(H))\setminus V(H)$ can be ordered as a sequence $\{v_1, v_2, \ldots, v_t\}$ such that $e_G(v_1, V(H))\ge 3$ and for each $i$ with $1\le i\le t-1$,
$$e_G(v_{i+1},V(H)\cup\{v_1, v_2, \ldots, v_i\})\ge 3.$$
\end{definition}

Notice that for each vertex $v\in V(G)\setminus V(cl_3(H))$, we have $e_G(v,cl_3(H))\leq 2$ by the definition. The following lemma tells that if $H\in \setCZ$, then $cl_3(H)$ is also ($\phi<3$)-contractible.

\begin{lemma}\label{3closure}
Let $G$ be a $4$-edge-connected graph with a subgraph $H$. Then each of the following statements holds.
\begin{enumerate}
\item[(i)] If $H\in \setCZ$ and $\phi(G/cl_3(H))<3$, then $\phi(G)<3$.
\item[(ii)] If $H\in \setCZ$ and $G/cl_3(H)\in \setCZ$, then $G\in \setCZ$.
\end{enumerate}
\end{lemma}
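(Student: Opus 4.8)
The plan is to isolate a contraction statement about $cl_3(H)$ that does \emph{not} require the $\setCZ$-hypothesis on $H$, and then to combine it with Lemma \ref{reduc-lem}, applying the hypothesis $H\in\setCZ$ exactly once. First I would order the vertices of $V(cl_3(H))\setminus V(H)$ as $v_1,v_2,\ldots,v_t$ according to Definition \ref{DEF:cl3}, set $H_0=H$ and $H_i=G[V(H)\cup\{v_1,\ldots,v_i\}]$ for $1\le i\le t$, so that $H_t=cl_3(H)$. Since edge-connectivity does not decrease under contraction, each $G/H_i$ is again $4$-edge-connected. The core claim to establish is that, for every $i$ with $0\le i\le t-1$, $\phi(G/H_{i+1})<3$ implies $\phi(G/H_i)<3$, and likewise $G/H_{i+1}\in\setCZ$ implies $G/H_i\in\setCZ$.

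To prove this claim I would observe that $G/H_{i+1}$ is obtained from $G/H_i$ by identifying the vertex $w_i$ resulting from contracting $H_i$ with the vertex $v_{i+1}$; by the attachment inequality in Definition \ref{DEF:cl3} we have $e_G(v_{i+1},V(H_i))\ge 3$, so $w_i$ and $v_{i+1}$ are joined in $G/H_i$ by at least three parallel edges. For the flow-index half, I would apply Lemma \ref{parallelledges} with $\beta\equiv 0$: a strongly-connected modulo $3$-orientation of $(G/H_i)/\{w_i,v_{i+1}\}=G/H_{i+1}$ extends to one of $G/H_i$, and Theorem \ref{SClemma} then yields $\phi(G/H_i)<3$. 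For the $\setCZ$ half, I would fix an arbitrary $\beta\in Z(G/H_i,\Z_3)$, push it down to a boundary $\beta'$ on $G/H_{i+1}$ by summing the values at $w_i$ and $v_{i+1}$, use $G/H_{i+1}\in\setCZ$ to obtain a strongly-connected $\beta'$-orientation, and extend it to a strongly-connected $\beta$-orientation of $G/H_i$ via Lemma \ref{parallelledges}; since $\beta$ is arbitrary, $G/H_i\in\setCZ$.

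Chaining the claim from $i=t-1$ down to $i=0$ then gives that $\phi(G/cl_3(H))<3$ implies $\phi(G/H)<3$, and that $G/cl_3(H)\in\setCZ$ implies $G/H\in\setCZ$ (the case $t=0$ being trivial). At this last stage I would invoke $H\in\setCZ$: Lemma \ref{reduc-lem}(i) turns $\phi(G/H)<3$ into $\phi(G)<3$, proving (i), and Lemma \ref{reduc-lem}(ii) turns $G/H\in\setCZ$ into $G\in\setCZ$, proving (ii).

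The step I expect to matter most is conceptual rather than technical: recognising that one should \emph{not} try to prove $cl_3(H)\in\setCZ$. The intermediate graphs $H_i$ generally fail to lie in $\setCZ$ — for instance, three parallel edges between two vertices form a ($\phi<3$)-contractible graph that is not in $\setCZ$, since strong connectivity forces at least one edge in each direction and hence a net degree difference $\equiv\pm1\pmod 3$ — so the $\setCZ$-property can be used only at the very end, at the contraction of $H$ itself. The remaining verifications (that contraction preserves $4$-edge-connectivity, and that contracting $H_i$ leaves $v_{i+1}$ joined to the contracted vertex by three parallel edges, which is just the defining inequality of the $3$-closure) are routine.
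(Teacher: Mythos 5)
Your proposal is correct and follows essentially the same route as the paper: peel off the closure vertices $v_t,\ldots,v_1$ one at a time, using Lemma \ref{parallelledges} (the $\ge 3$ parallel edges created by the $3$-closure condition, with $4$-edge-connectivity preserved under contraction) to extend a strongly-connected modulo $3$-orientation, respectively $\beta$-orientation, from $G/H_{i+1}$ to $G/H_i$, and only at the final step invoke $H\in\setCZ$ via Lemma \ref{reduc-lem}. Your explicit remark that the intermediate graphs need not themselves be in $\setCZ$ (e.g.\ $3K_2\notin\setCZ$) is a correct and worthwhile clarification, but it does not change the argument, which matches the paper's proof.
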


\begin{proof}
 (i) Let $\{v_1, v_2, \ldots, v_t\}$ be the ordered sequence of $V(cl_3(H))\setminus V(H)$ as in Definition \ref{DEF:cl3}. Denote $H_i=G[V(H)\cup \{v_1, v_2, \ldots, v_{t+1-i}\}]$ for each $1\le i\le t$ and $H_{t+1}=H$. By Lemma \ref{3edge}, we first extend a strongly-connected modulo $3$-orientation of $G/cl_3(H)=G/H_1$ to $G/H_2$.  By applying  Lemma \ref{3edge} recursively, we can extend a strongly-connected modulo $3$-orientation of $G/H_{i}$ to $G/H_{i+1}$ for each $i=1,2,\ldots, t$. Then we apply Lemma \ref{reduc-lem} to extend this strongly-connected modulo $3$-orientation of $G/H$ to a strongly-connected modulo $3$-orientation of $G$.

 (ii) The proof of (ii) is similar to that of (i) with strongly-connected $\beta$-orientation replacing strongly-connected modulo $3$-orientation.
\end{proof}

\subsection{Properties of contractible graphs }

By Theorem $4.2$ of \cite{LTWZ18}, we have the following theorem.
\begin{theorem}\label{8edge}{\em (\cite{LTWZ18})}
For every $8$-edge-connected graph $G$,  $G\in \setCZ$.
\end{theorem}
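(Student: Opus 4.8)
The aim is to show that an $8$-edge-connected graph $G$ lies in $\setCZ$; that is, for every $\beta\in Z(G,\Z_3)$ it has a strongly-connected $\beta$-orientation. I would proceed by induction on $|E(G)|$, in fact proving a slightly stronger ``one special vertex'' statement (a graph that is $8$-edge-connected except possibly at a single distinguished vertex of smaller degree), so that the inductive reductions below stay inside the class being treated; the theorem is then the special case with no distinguished vertex. The first reduction clears multiple edges: if a pair $x,y$ is joined by at least three parallel edges, then $G/E(x,y)$ is still $8$-edge-connected, since contracting a set of parallel edges cannot lower the edge-connectivity, and it has fewer edges, so by induction $G/E(x,y)\in\setCZ$; Lemma \ref{3edge} then lifts a strongly-connected $\beta'$-orientation of $G/E(x,y)$ to the desired orientation of $G$. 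Hence we may assume at most two parallel edges between any two vertices. Base cases are checked directly; for instance if $|V(G)|\le 2$ then $G$ is a bundle of $k\ge 8$ parallel edges, and orienting $a$ of them one way and $k-a$ the other realizes any residue modulo $3$ in $d^+(v)-d^-(v)$ while leaving an arc in each direction, so $G\in\setCZ$.

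For the inductive step the plan is to follow the Lov\'asz--Thomassen--Wu--Zhang proof of Theorem \ref{LTWZ2013thm} — more precisely of its strengthening in \cite{LTWZ13} that $6$-edge-connected graphs are $\Z_3$-connected — while carrying the word ``strongly-connected'' through every reduction; the two extra units of edge-connectivity, $8=6+2$, are exactly what is needed to do this, the extra $2$ being what guarantees (via Robbins' theorem, in the form of Lemma \ref{dipath}) that a strongly orientable spanning structure survives the reductions. It is worth emphasizing why one cannot simply post-process: the $\Z_3$-connectivity result already provides some $\beta$-orientation $D_0$, but reversing directed cycles preserves $d^+(v)-d^-(v)$ at every vertex and therefore cannot change the condensation of $D_0$, so strong connectivity must be built into the recursion. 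Each step is of one of two kinds. When a subgraph already known to be in $\setCZ$ — three parallel edges, or any configuration previously shown contractible — can be contracted, Lemmas \ref{reduc-lem}, \ref{3edge} and \ref{3closure} supply the lift. Otherwise one applies Mader's edge-splitting theorem to split off a pair of edges at a vertex $v$ of large enough degree, preserving the edge-connectivity between all vertices other than $v$; one feeds the smaller graph (with $v$ as the distinguished low-degree vertex if its degree has dropped below $8$) to the induction, obtains a strongly-connected $\beta$-orientation there, and undoes the split by replacing each arc $u\to w$ that arose from a split pair $\{uv,vw\}$ with the directed path $u\to v\to w$. This undoing does not change $d^+(v)-d^-(v)$, hence preserves $\beta$, and, since replacing an arc of a strongly-connected digraph by a directed path of length two keeps it strongly connected, it preserves strong connectivity.

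The hard part will be the bookkeeping of this strengthened induction. After each contraction or splitting one must verify that the reduced graph still meets the (refined) connectivity hypothesis, that the induced boundary still lies in $Z(\cdot,\Z_3)$, and — the delicate point — that a strongly-connected orientation of the reduced graph always lifts back. The lifting is exactly what Lemmas \ref{dipath}, \ref{reduc-lem}, \ref{3edge} and \ref{3closure} are engineered to provide, but arranging that the graphs produced by the Thomassen--LTWZ reduction actually meet their hypotheses (4-edge-connectivity; the contracted piece lying in $\setCZ$; at least three parallel edges; $2$-edge-connectivity) while simultaneously keeping the modulo-$3$ orientation argument of \cite{LTWZ13} running is where the weight of the proof would sit, and is what forces the bound $8$ rather than $6$.
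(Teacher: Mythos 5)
First, note that the paper does not prove this statement at all: it is quoted as Theorem 4.2 of \cite{LTWZ18}, so the only ``proof'' in the paper is that citation. Measured against the actual argument in \cite{LTWZ18}, your proposal has a genuine gap: everything beyond the easy reductions (the parallel-edge contraction via Lemma \ref{3edge} and the two-vertex base case) is a plan rather than a proof. The assertion that one can follow the Lov\'asz--Thomassen--Wu--Zhang recursion of \cite{LTWZ13} ``carrying strongly-connected through every reduction,'' and that the two extra units of connectivity in $8=6+2$ are exactly what is needed, is precisely the content of the theorem being proved; you acknowledge this yourself by saying the hard part is the bookkeeping of the strengthened induction, but that bookkeeping \emph{is} the theorem. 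Nothing in the sketch verifies that the graphs produced by the recursion satisfy the hypotheses of Lemmas \ref{dipath}, \ref{reduc-lem}, \ref{3edge} and \ref{3closure}, nor that the recursion stays inside the proposed class, and it is worth recalling that whether $6$-edge-connectivity already suffices for $\phi<3$ is open (Conjecture \ref{CONJ:<3}), so ``add $2$ and push the old proof through'' cannot be taken as self-evident.

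Second, the one mechanism you do commit to --- Mader splitting at a vertex $v$, recursing on a graph that is $8$-edge-connected except at a single distinguished low-degree vertex, then undoing the splits --- meets a concrete obstruction. Once the distinguished vertex has been split down to small degree, a strongly-connected $\beta$-orientation of the reduced graph may simply not exist: if $d(v)=2$ and $\beta(v)\equiv \pm 1 \pmod 3$, the only orientations at $v$ realizing $\beta(v)$ make $v$ a source or a sink, which is incompatible with strong connectivity; and if a second vertex must be split inside the recursion, the ``one special vertex'' class is left altogether. This is exactly why Thomassen's and the LTWZ inductions do not leave the special vertex's orientation to the recursion but instead \emph{pre-orient} the edges at a contracted special vertex and prove an extension statement with degree hypotheses on the remaining vertices; that pre-orientation device, which is the engine of the proof of Theorem 4.2 in \cite{LTWZ18}, is absent from your outline. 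The correct resolution here is either to cite \cite{LTWZ18}, as the paper does, or to reproduce that technical extension theorem in its strongly-connected form; the present sketch does neither.
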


A graph is called {\em trivial} if it is a singleton $K_1$, and {\em nontrivial} otherwise. The following lemma is due to Nash-Williams \cite{Nash64} in terms of matroids, and a detailed proof can be found in Theorem 2.4 of \cite{YLL10}.
\begin{lemma}\label{nash}
(Nash-Williams \cite{Nash64}) Let $G$ be a nontrivial graph and let $k > 0$ be an integer. If $|E(G)| \geq k(|V(G)|-1)$, then $G$ has a nontrivial subgraph $H$ such that $H$ contains $k$ edge-disjoint spanning trees.
\end{lemma}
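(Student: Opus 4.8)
The plan is to deduce Lemma~\ref{nash} from the Nash-Williams--Tutte spanning-tree packing theorem: a (multi)graph $\Gamma$ has $k$ edge-disjoint spanning trees if and only if, for every partition $\mathcal{P}=\{V_1,\dots,V_r\}$ of $V(\Gamma)$ into nonempty classes, the number $e_{\mathcal P}(\Gamma)$ of edges of $\Gamma$ with endpoints in two distinct classes satisfies $e_{\mathcal P}(\Gamma)\ge k(r-1)$. Granting this, the lemma reduces to a short minimality argument.

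First I would consider the family $\mathcal F$ of all nontrivial subgraphs $H\subseteq G$ with $|E(H)|\ge k(|V(H)|-1)$. This family is nonempty, since $G$ itself belongs to it, so I may pick $H_0\in\mathcal F$ with $|V(H_0)|$ minimum. The claim is that $H_0$ already contains $k$ edge-disjoint spanning trees, which proves the lemma with $H=H_0$. Suppose not; then the packing theorem supplies a partition $\mathcal P=\{V_1,\dots,V_r\}$ of $V(H_0)$ with $r\ge 2$ and $e_{\mathcal P}(H_0)\le k(r-1)-1$. Summing the edges of $H_0$ that lie inside the classes and using $\sum_{i}(|V_i|-1)=|V(H_0)|-r$,
$$\sum_{i=1}^{r}\bigl|E(H_0[V_i])\bigr|\;=\;|E(H_0)|-e_{\mathcal P}(H_0)\;\ge\;k(|V(H_0)|-1)-\bigl(k(r-1)-1\bigr)\;=\;k\sum_{i=1}^{r}(|V_i|-1)+1.$$
Hence $\sum_{i=1}^{r}\bigl(|E(H_0[V_i])|-k(|V_i|-1)\bigr)\ge 1$, so some class $V_j$ satisfies $|E(H_0[V_j])|\ge k(|V_j|-1)+1$. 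Since a trivial class contributes $0$ to this sum, $|V_j|\ge 2$; and $|V_j|<|V(H_0)|$ because $r\ge 2$. Thus $H_0[V_j]\in\mathcal F$ has strictly fewer vertices than $H_0$, contradicting minimality.

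I do not expect a genuine obstacle once the Nash-Williams--Tutte theorem is available (or simply quoted from \cite{Nash64, YLL10}); the only step requiring a little care is the pigeonhole, where one must check that the class $V_j$ extracted is \emph{nontrivial}, so that $H_0[V_j]$ is again an eligible member of $\mathcal F$ --- this is precisely why it matters that the total ``surplus'' $\sum_i\bigl(|E(H_0[V_i])|-k(|V_i|-1)\bigr)$ is at least $1$ while trivial classes have surplus $0$. If one insisted on a self-contained treatment, the real work would be a proof of the tree-packing theorem (equivalently, of matroid union) itself, which is the only substantial ingredient hiding behind this statement.
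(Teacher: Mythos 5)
Your proof is correct. The paper does not actually prove this lemma itself --- it quotes it from Nash-Williams \cite{Nash64} and points to Theorem 2.4 of \cite{YLL10} for a detailed proof --- and your argument (take a vertex-minimal nontrivial subgraph $H_0$ with $|E(H_0)|\ge k(|V(H_0)|-1)$, apply the Nash-Williams--Tutte packing criterion to a violating partition, and use the integer surplus count, noting that singleton classes have surplus $0$ so the extracted class is nontrivial and strictly smaller) is exactly the standard derivation behind that citation, with the key care point (nontriviality of the extracted class) handled correctly.
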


Theorem \ref{8edge} and Lemma \ref{nash} immediately imply the following lemma, which shows that graphs with enough edges must have a nontrivial \setCZ-subgraph.
\begin{lemma}\label{czsize16}
Let $G$ be a simple graph with $|E(G)| \geq 8(|V(G)|-1)$. Then $G$ has a nontrival subgraph $H\in \setCZ$ with $|V(H)|\ge 16$.
\end{lemma}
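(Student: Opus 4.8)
The plan is to derive the result directly from Nash-Williams' tree-packing theorem (Lemma \ref{nash}) together with the contractibility of highly edge-connected graphs (Theorem \ref{8edge}), with a short counting argument at the end to pin down the vertex count.

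First I would apply Lemma \ref{nash} with $k=8$. Since $|E(G)|\ge 8(|V(G)|-1)$ and $G$ is nontrivial, the lemma produces a nontrivial subgraph $H$ that contains $8$ edge-disjoint spanning trees. The next step is to note that such an $H$ is $8$-edge-connected: any edge cut of $H$ of the form $E_H(S,V(H)\setminus S)$ with $\emptyset\ne S\subsetneq V(H)$ must contain at least one edge of each of the $8$ spanning trees, because each spanning tree is connected and meets both $S$ and $V(H)\setminus S$; since the trees are pairwise edge-disjoint, the cut has at least $8$ edges. Theorem \ref{8edge} then gives $H\in\setCZ$.

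For the size bound $|V(H)|\ge 16$, I would use that $H$, being a subgraph of the simple graph $G$, is itself simple, so $|E(H)|\le\binom{|V(H)|}{2}$. On the other hand, the $8$ edge-disjoint spanning trees force $|E(H)|\ge 8(|V(H)|-1)$. Combining these, $8(|V(H)|-1)\le\frac{|V(H)|(|V(H)|-1)}{2}$; dividing by $|V(H)|-1>0$ yields $|V(H)|\ge 16$, which in particular re-confirms that $H$ is nontrivial.

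There is no substantial obstacle in this argument: the only points requiring care are the (standard) deduction that a packing of $8$ edge-disjoint spanning trees certifies $8$-edge-connectivity, and the elementary inequality that converts the edge count into the vertex lower bound. Everything else is an immediate invocation of Lemma \ref{nash} and Theorem \ref{8edge}.
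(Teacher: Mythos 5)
Your proposal is correct and follows essentially the same route as the paper: invoke Nash-Williams (Lemma \ref{nash}) with $k=8$, observe that a packing of $8$ edge-disjoint spanning trees forces $8$-edge-connectivity so that Theorem \ref{8edge} gives $H\in\setCZ$, and then use simplicity of $H$ together with $|E(H)|\ge 8(|V(H)|-1)$ to get $|V(H)|\ge 16$. You merely spell out the two steps the paper leaves as ``clearly,'' so there is nothing to change.
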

\begin{proof}
By Lemma \ref{nash}, $G$ has a nontrivial subgraph $H$ that contains $8$ edge-disjoint spanning trees. Clearly, $H$ is $8$-edge-connected, and so $H\in \setCZ$ by Theorem \ref{8edge}. If $H$ is a simple graph, then $|V(H)|\ge 16$ follows from that $H$ contains $8$ edge-disjoint spanning trees.
\end{proof}

On the other hand, we also show that an $\setCZ$-graph cannot be too sparse.
\begin{lemma}\label{totaledge}
If a nontrivial graph $G$ belongs to $\setCZ$, then $|E(G)|\geq3|V(G)|-2$.
\end{lemma}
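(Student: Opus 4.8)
The statement to prove is Lemma \ref{totaledge}: if a nontrivial graph $G\in\setCZ$, then $|E(G)|\ge 3|V(G)|-2$. The natural approach is a contrapositive counting argument. Suppose $|E(G)|\le 3|V(G)|-3$; I want to produce a boundary function $\beta\in Z(G,\Z_3)$ that admits no strongly-connected $\beta$-orientation, contradicting $G\in\setCZ$. The key observation is that any strongly-connected orientation of $G$ uses every edge, and that a strongly-connected digraph on $n$ vertices has at least $n$ arcs; more importantly, a strongly-connected digraph has $d^+(v)\ge 1$ and $d^-(v)\ge 1$ at every vertex. So the real leverage should come from the boundary constraint $d^+_D(v)-d^-_D(v)\equiv\beta(v)\pmod 3$ combined with the degree sequence.

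First I would recall the elementary fact that for a fixed orientation $D$ of $G$, the value $d^+_D(v)-d^-_D(v)$ has the same parity as $d_G(v)$ and lies in $\{-d_G(v),-d_G(v)+2,\dots,d_G(v)\}$. Thus a vertex $v$ can realize a prescribed residue $\beta(v)\bmod 3$ only if the arithmetic progression $\{-d_G(v),-d_G(v)+2,\dots,d_G(v)\}$ meets the residue class $\beta(v)$ mod $3$; when $d_G(v)$ is small this is restrictive. A cleaner route: count arcs. In a strongly-connected orientation every vertex has $d^+\ge 1$ and $d^-\ge 1$. If additionally we need $d^+(v)-d^-(v)\equiv\beta(v)\pmod 3$, then for a vertex with $d_G(v)=d$ we need a splitting $d=d^++d^-$ with $d^+,d^-\ge 1$ and $d^+-d^-\equiv\beta(v)$. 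For $d=2$ the only strongly-connected-compatible split is $d^+=d^-=1$, forcing $\beta(v)\equiv 0$; for $d=3$ the split is $\{1,2\}$ or $\{2,1\}$, forcing $\beta(v)\equiv\pm 1$, i.e. $\beta(v)\ne 0$. So choosing $\beta$ to violate the forced residue at a low-degree vertex already kills the orientation. The point of the bound $3|V(G)|-3$ is that if $|E(G)|<3|V(G)|-2$ then by averaging $G$ has a vertex of degree at most... no — that only gives degree $\le 5$, which is not immediately restrictive. So a global argument is needed rather than a single-vertex one.

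The correct global mechanism, I expect, is this: sum the constraint over all vertices. Fix any orientation $D$. Then $\sum_v |d^+_D(v)-d^-_D(v)|$ — no, that is not linear. Instead, work modulo $3$ but track an integer potential. Here is the clean version: Let $D$ be a strongly-connected $\beta$-orientation. Define $\sigma(v)=d^+_D(v)-d^-_D(v)$, so $\sigma(v)\equiv\beta(v)\pmod 3$ and $\sum_v\sigma(v)=0$, and $\sigma(v)\equiv d_G(v)\pmod 2$. Since $D$ is strongly connected, $1\le d^+_D(v)\le d_G(v)-1$, hence $|\sigma(v)|\le d_G(v)-2$. Now choose $\beta$ so that the \emph{smallest possible} $|\sigma(v)|$ consistent with $\sigma(v)\equiv\beta(v)\pmod 3$ is forced to be large. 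Concretely, pick $\beta(v)$ to be the residue mod $3$ that is \emph{farthest} from $0$ relative to the parity of $d_G(v)$: if $d_G(v)$ is even we can try to force $|\sigma(v)|\ge 3$ (pick $\beta(v)\in\{1,2\}$, realizable only if $d_G(v)\ge 5$); if $d_G(v)$ is odd, $\sigma(v)$ is odd so $|\sigma(v)|\ge 1$ always, and we can force $|\sigma(v)|\ge 1$ freely, but to push to $3$ we'd need $d_G(v)\ge 5$ again. Summing $\sum_v |\sigma(v)| \le \sum_v (d_G(v)-2) = 2|E(G)| - 2|V(G)|$. Meanwhile the lower bound $\sum_v|\sigma(v)|$ forced by a well-chosen $\beta$ together with $\sum_v\sigma(v)=0$ (so the positive part equals the negative part, each at least half the total) should be made to exceed $2|E(G)|-2|V(G)|$ when $|E(G)|\le 3|V(G)|-3$, i.e. when $2|E(G)|-2|V(G)|\le 4|V(G)|-6$. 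So I need the forced $\sum_v|\sigma(v)|$ to exceed $4|V(G)|-6$, i.e. roughly $4$ per vertex on average — this is where a careful choice of $\beta$, perhaps alternating signs to keep $\sum\sigma(v)=0$ while pushing each $|\sigma(v)|$ up to $3$ or so, must be engineered, using that $\delta(G)\ge 4$ is \emph{not} assumed here so the genuinely hard case is vertices of degree $2,3,4$.

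\textbf{The main obstacle.} The delicate point is handling vertices of degree $2$, $3$, and $4$, where the residue $\beta(v)$ cannot be chosen freely if we also insist on strong connectivity (degree-$2$ forces $\beta(v)=0$ and $|\sigma(v)|=0$; degree-$3$ forces $\beta(v)\ne 0$ and $|\sigma(v)|=1$; degree-$4$ gives $|\sigma(v)|\in\{0,2\}$). These vertices contribute little to $\sum_v|\sigma(v)|$ yet also have small degree, so both sides of the inequality shrink; the argument must show the deficit $3|V(G)|-2-|E(G)|$ is genuinely obstructed. I anticipate the honest proof instead picks out a small edge cut or uses the structure more locally: since $|E(G)|\le 3|V(G)|-3 = 3(|V(G)|-1)$, by Lemma \ref{nash} applied with $k=3$... no, that gives a subgraph with $3$ spanning trees, which is the wrong direction. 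Rather, $|E(G)|<3|V(G)|-2$ means the \emph{complement} of the spanning-tree-packing condition, so there should be a nonempty proper subset $X\subsetneq V(G)$ with $e_G(X,\overline X)\le$ small, i.e. a sparse cut; one then shows a bad attachment / small cut prevents the $\setCZ$-property by the same residue-counting on the cut. This localization — finding the obstructing sparse cut from the global edge count and then running the mod-$3$ parity obstruction across it — is the step I expect to require the most care, and it likely mirrors "Remark 1 in Section 2" about bad attachments referenced earlier in the paper.
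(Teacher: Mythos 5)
There is a genuine gap: your proposal never actually produces a boundary $\beta$ and a completed count; you candidly identify the obstacle (vertices of degree $2$, $3$, $4$ force $|\sigma(v)|$ to be small) and then pivot to a speculative sparse-cut/localization detour that you also do not carry out. The deeper problem is that $\sum_v|\sigma(v)|$, where $\sigma(v)=d^+_D(v)-d^-_D(v)$, is the wrong invariant for this lemma: even a vertex of degree $6$ can have $\sigma(v)=0$ in a valid orientation, so no choice of $\beta$ forces the per-vertex average of $|\sigma(v)|$ up to the level (about $4$) that your inequality $\sum_v|\sigma(v)|\le 2|E(G)|-2|V(G)|$ would require. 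The edge count $3|V(G)|-2$ is not reached through the imbalance $|\sigma|$ at all.

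The missing idea, which is how the paper argues, is to count out-degrees directly. Fix one vertex $x$ and set $\beta(v)\equiv -d_G(v)\pmod 3$ for every $v\ne x$, with $\beta(x)$ chosen so that $\sum_v\beta(v)\equiv 0\pmod 3$; this is a legitimate element of $Z(G,\Z_3)$. In any orientation, $d^+_D(v)-d^-_D(v)=2d^+_D(v)-d_G(v)$, so the congruence $\beta(v)\equiv 2d^+_D(v)-d_G(v)\pmod 3$ at $v\ne x$ forces $d^+_D(v)\equiv 0\pmod 3$ (since $2$ is invertible mod $3$). Strong connectivity gives $d^+_D(v)\ge 1$, hence $d^+_D(v)\ge 3$ for all $v\ne x$, and $d^+_D(x)\ge 1$. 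Summing out-degrees, $|E(G)|=\sum_v d^+_D(v)\ge 1+3(|V(G)|-1)=3|V(G)|-2$. This is a direct one-paragraph argument; no contrapositive, no sparse cut, and no Nash-Williams input is needed — the exemption of the single vertex $x$ (needed to make $\beta$ sum to zero) is exactly what accounts for the ``$-2$'' in the bound.
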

\begin{proof}
Fix a vertex $x\in V(G)$, define a boundary function $\beta:V(G)\rightarrow \Z_3$ by
\begin{equation*}\beta(v)\equiv
\left\{
         \begin{array}{lr}
         \sum\limits_{ y\in V(G)\setminus\{x\}}d_G(y) \pmod3, & \text{if}~v=x,\\
          -d_G(v)~~~~~~~~~~~~\pmod3, & \text{if}~v\neq x.
         \end{array}.
\right.
\end{equation*}
Clearly, $\sum_{v\in V(G)}\beta(v)\equiv 0\pmod3$ and $\beta\in Z(G,\Z_3)$.
Since $G\in \setCZ$, there is a strongly-connected $\beta$-orientation $D$ of $G$, that is, $\beta(v)\equiv d_D^+(v)-d_D^-(v)=2d_D^+(v)-d_G(v)\pmod3$ for any vertex $v\in V(G)$. For any vertex $v\in V(G)\setminus\{x\}$, since $\beta(v)\equiv -d_G(v)\pmod3$, we have $d_D^+(v)\equiv0\pmod3$, and so $d_D^+(v)\ge 3$ as a positive integer since $D$ is strongly-connected. Moreover, $d_D^+(x)\geq 1$ since $D$ is strongly-connected. Therefore, $$|E(G)|=\sum_{v\in V(G)} d_D^+(v)=d_D^+(x)+\sum_{v\in V(G)\setminus\{x\}} d_D^+(v)\geq 1+3(|V(G)|-1)=3|V(G)|-2.$$
\end{proof}
\noindent{\bf Remark 1:}  If a graph $G$ contains a bad attachment $\Gamma$, then for $X=V(G)\setminus V(\Gamma)$, the graph $G/X$ has $|V(\Gamma)|+1$ vertices and at most $3|V(\Gamma)|$ edges. Thus $G/X\notin \setCZ$ by Lemma \ref{totaledge}, and so $G\notin \setCZ$ by Observation \ref{inandnotin}.\\

Now we develop some techniques to find  $\setCZ$-graphs from smaller graphs.
For a graph $G$ with a $4^+$-vertex $v$ and $va, vb\in E_G(v)$, define $G_{[v,ab]}=G-v+ab$ as the graph obtained from $G$ by deleting the vertex $v$ and adding a new edge $ab$.

\begin{lemma}\label{split1}
Let $v$ be a $4^+$-vertex of a graph $G$ with $va, vb\in E_G(v)$. If $G_{[v,ab]}\in\setCZ$, then $G\in\setCZ$.
\end{lemma}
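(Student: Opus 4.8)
The plan is to reduce the strongly-connected $\Z_3$-contractibility of $G$ to that of $G_{[v,ab]}$ by showing how a strongly-connected $\beta$-orientation of the smaller graph lifts back across the "un-splitting" of the vertex $v$. Fix an arbitrary $\beta\in Z(G,\Z_3)$. Since $v$ is a $4^+$-vertex, write $E_G(v)=\{va,vb,e_1,e_2,\dots,e_{k}\}$ with $k\ge 2$ (some of these may be parallel, and $a=b$ is not possible in a simple graph but the argument does not need simplicity here). The edge $ab$ in $G_{[v,ab]}$ will play the role of the path $a\,v\,b$ in $G$; the remaining edges $e_1,\dots,e_k$ at $v$ must be absorbed into the boundary condition at $v$ after we choose an orientation for them.

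First I would define a boundary function $\beta'$ on $G_{[v,ab]}$ that records $\beta$ away from $v$ and encodes, at the endpoints $a$ and $b$, the net effect of the edges $e_1,\dots,e_k$ once they are oriented. Concretely: arbitrarily orient $e_1,\dots,e_k$ in $G$, say so that $s$ of them point out of $v$ and $k-s$ point in; the contribution of these edges to $d^+_D(v)-d^-_D(v)$ is then the fixed integer $2s-k$. To make $v$ satisfy its congruence in the eventual orientation of $G$, the two edges $va,vb$ must contribute $\beta(v)-(2s-k)\pmod 3$ to the imbalance at $v$; there are three possibilities for how a directed $a\,v\,b$ path passes through $v$ (both toward $v$, both away, or one each), giving contributions $-2\equiv 1$, $+2\equiv -1$, and $0$ respectively, so every residue in $\Z_3$ is attainable, and we fix the orientation pattern of $\{va,vb\}$ accordingly. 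Correspondingly set $\beta'(a)=\beta(a)\pm 1$ and $\beta'(b)=\beta(b)\mp 1$ (or $\beta'(a)=\beta(a),\beta'(b)=\beta(b)$ in the "one each" case) so that the single edge $ab$ in $G_{[v,ab]}$, oriented the way the path $a\,v\,b$ is traversed, produces the same local imbalance at $a$ and at $b$ as the two edges $va,vb$ did in $G$; also $\beta'$ inherits $\beta$ on all other vertices and absorbs the $e_i$-imbalance at whichever endpoints $e_i$ have in $G-v$. A short check gives $\sum\beta'\equiv\sum\beta\equiv 0\pmod 3$, so $\beta'\in Z(G_{[v,ab]},\Z_3)$.

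Since $G_{[v,ab]}\in\setCZ$, there is a strongly-connected $\beta'$-orientation $D'$ of $G_{[v,ab]}$. I would now build an orientation $D$ of $G$: keep $D'$ on all edges of $G$ other than $va,vb,e_1,\dots,e_k$; orient $e_1,\dots,e_k$ as chosen above; and replace the directed edge $ab$ of $D'$ by the directed path $a\to v\to b$ (or $b\to v\to a$) matching its direction in $D'$, together with the chosen directions on $va,vb$. By construction the imbalance at every vertex of $G$ is correct modulo $3$: at $a$ and $b$ the single edge $ab$ is swapped for one edge incident with $v$ having the same direction, leaving the imbalance unchanged, and the chosen $e_i$-directions were already accounted for in $\beta'$; at $v$ the imbalance is $(2s-k)+(\text{contribution of }va,vb)\equiv\beta(v)\pmod 3$ by our choice; elsewhere nothing changed. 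Finally, $D$ is strongly-connected: contracting the edge set $\{va,vb\}$ (which forms a connected subgraph through $v$) recovers $D'$, which is strongly-connected, and since $G$ is connected the "un-contraction" along a directed path $a\to v\to b$ preserves strong connectivity — any directed closed walk of $D'$ through the vertex $ab$ lifts to one through $v$, and conversely — so by Lemma \ref{reduc-lem} (or directly, as in Lemma \ref{dipath}) $D$ is strongly-connected. As $\beta$ was arbitrary, $G\in\setCZ$.

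The main obstacle I anticipate is the bookkeeping in the second paragraph: verifying that all three residues in $\Z_3$ are realizable at $v$ by some direction pattern of the pair $\{va,vb\}$ combined with a suitable choice of $s$ (the number of out-directed $e_i$'s), and then setting $\beta'(a),\beta'(b)$ so that switching from the two edges $va,vb$ to the single edge $ab$ is boundary-neutral at $a$ and $b$ — this needs the degree hypothesis $d_G(v)\ge 4$ only to the extent that $va,vb$ genuinely exist and are distinct from the $e_i$, and the parity/residue matching must be checked case by case. The strong-connectivity lift is routine given Lemma \ref{dipath}, and the feasibility of $\beta'$ is the one place where a clean case analysis is required.
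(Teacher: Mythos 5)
There is a genuine gap in your construction, and it sits exactly where you predicted the ``bookkeeping'' would be delicate. You orient the extra edges $e_1,\dots,e_k$ at $v$ \emph{arbitrarily} and then try to repair the residue at $v$ by choosing the orientation pattern of the pair $\{va,vb\}$, allowing the patterns ``both toward $v$'' and ``both away from $v$''. In those two patterns the pair $\{va,vb\}$ no longer simulates the single edge $ab$: the edge $ab$ contributes $+1$ at one of $a,b$ and $-1$ at the other (depending on the direction $D'$ gives it), while your pre-chosen pair contributes $+1$ at both or $-1$ at both. Hence the corrections $\beta'(a),\beta'(b)$ you would need depend on the (unknown) direction of $ab$ in the orientation $D'$ that the $\setCZ$-property of $G_{[v,ab]}$ later produces, so $\beta'$ cannot be defined in advance; your own later instruction to replace $ab$ by a directed path through $v$ ``matching its direction in $D'$'' flatly contradicts the non-path pattern you fixed beforehand. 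Strong connectivity also becomes problematic in these patterns: if the $e_i$ happen to be oriented into $v$ and you choose ``both toward $v$'', the vertex $v$ is a sink. (Your contraction argument is also slightly off: contracting $\{va,vb\}$ merges $a,v,b$ into one vertex, which is not $G_{[v,ab]}$; the correct viewpoint is that $G$ is $G_{[v,ab]}$ with the arc $ab$ subdivided by $v$ plus extra arcs, and subdividing an arc of a strongly connected digraph and adding arcs preserves strong connectivity.)

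The repair is the paper's proof, and it explains why the $4^+$-hypothesis is needed beyond mere existence of $va,vb$: since $|E_G(v)\setminus\{va,vb\}|\ge 2$, the value $2s-k$ already realizes every residue of $\Z_3$, so you should choose the orientations of $e_1,\dots,e_k$ (i.e.\ choose $s$) so that they alone produce $\beta(v)$ at $v$, push the resulting $\pm1$ corrections onto the \emph{other} endpoints of these edges to define $\beta'$, and keep $\beta'(a)=\beta(a)$, $\beta'(b)=\beta(b)$. Then $va,vb$ are always used in the path pattern, inheriting the direction of $ab$ from $D'$: this contributes $0$ at $v$, leaves the imbalances at $a$ and $b$ untouched, and guarantees $v$ has both an in-arc and an out-arc, so the lifted orientation is a strongly-connected $\beta$-orientation of $G$. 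With that change your argument coincides with the paper's.
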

\begin{proof}
Let $\beta\in Z(G,\Z_3)$. We first orient all the edges of $E_G(v)\setminus\{va,vb\}$ to modify the boundary $\beta(v)$. Note that this is possible since $|E_G(v)\setminus\{va,vb\}|\ge 2$.  Then delete the oriented edges and change the boundaries of the end vertices other than $v$. Specifically, for each edge $vx \in E_G(v)\setminus\{va,vb\}$ that we oriented, increase or decrease the boundary function of $x$ by $1$ depending on the orientation of $vx$ is into $x$ or out of $x$. This results a boundary function $\beta'$ of $G_{[v,ab]}$. Since $G_{[v,ab]}\in\setCZ$, there exists a strongly-connected $\beta'$-orientation $D'$ of $G_{[v,ab]}$. By adding those deleted oriented edges and replacing the edge $ab$ by $av, vb$ (and keep their orientation), we obtain a strongly-connected $\beta$-orientation of $G$. This argument holds for any $\beta\in Z(G,\Z_3)$, and hence $G\in\setCZ$.
\end{proof}

\begin{lemma}\label{split2}
Let $G$ be a $4$-edge-connected graph and $u,v$ be two adjacent vertices in $G$. Assume that $e_G(v, V(G)\setminus\{u,v\})\ge 3$ and let $va, vb\in E(v,V(G)\setminus\{u,v\})$. Denote $G_1=G-u-v+ab$. If $G_1\in \setCZ$, then $G\in \setCZ$.
\end{lemma}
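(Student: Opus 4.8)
The plan is to adapt the argument of Lemma~\ref{split1}: starting from a strongly-connected $\beta_1$-orientation of $G_1$ we want to build a strongly-connected $\beta$-orientation of $G$, where $v$ is recovered by subdividing the new edge $ab$ while $u$ must be re-attached ``for free''. Write $\mathrm{out}=V(G)\setminus\{u,v\}$, let $p=e_G(v,\mathrm{out})-2$ be the number of edges in $E_G(v,\mathrm{out})\setminus\{va,vb\}$, let $r=e_G(u,v)$, and let $q=e_G(u,\mathrm{out})$. Since $G$ is $4$-edge-connected we have $p\ge 1$, $r\ge 1$, and $d_G(u)=q+r\ge 4$; moreover --- and this is the only place where $4$-edge-connectivity is genuinely used --- if $p=1$ then $e_G(v,\mathrm{out})=3$, so the edge cut $E_G(v,\mathrm{out})\cup E_G(u,\mathrm{out})$ separating $\{u,v\}$ from the rest has $3+q\ge 4$ edges, forcing $q\ge 1$.

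The first and main step is, for a fixed arbitrary $\beta\in Z(G,\Z_3)$, to orient every edge incident to $u$ or $v$ other than $va$ and $vb$, obtaining a partial orientation $D_0$ with $d^+_{D_0}(x)-d^-_{D_0}(x)\equiv\beta(x)\pmod 3$ for $x\in\{u,v\}$ and, in addition, $d^+_{D_0}(u)\ge 1$ and $d^-_{D_0}(u)\ge 1$. I would first orient all $d_G(u)$ edges at $u$ choosing $d^+_{D_0}(u)\in\{1,2,3\}$ so that $2d^+_{D_0}(u)-d_G(u)\equiv\beta(u)\pmod 3$ (possible since $d_G(u)\ge 4$, and this forces $d^-_{D_0}(u)\ge 1$), and then orient the $p$ edges of $E_G(v,\mathrm{out})\setminus\{va,vb\}$ to correct the net degree of $v$. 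If $p\ge 2$ this correction succeeds regardless of how the edges of $E_G(u,v)$ got oriented, since those $p$ edges alone can realize every residue mod $3$. The delicate case is $p=1$: here the single free edge at $v$ changes the net degree of $v$ by $\pm1$, so one must have arranged that the contribution of $E_G(u,v)$ to the net degree of $v$ avoids the one ``bad'' residue $\beta(v)\pmod 3$; a short interval count (using $q\ge1$ and $q+r\ge4$) shows that, for the already-fixed value of $d^+_{D_0}(u)$, the admissible number of $E_G(u,v)$-edges directed away from $u$ ranges over at least two consecutive integers, hence over two distinct residues mod $3$, and therefore can be chosen to hit one of the two good residues.

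Given such a $D_0$, pass to $G_1=G-u-v+ab$ with boundary $\beta_1(x)\equiv\beta(x)-\bigl(d^+_{D_0}(x)-d^-_{D_0}(x)\bigr)\pmod 3$, where only $D_0$-edges are counted at $x$. Since each $D_0$-edge contributes $+1$ at one end and $-1$ at the other, $\sum_{x\in V(G)}\bigl(d^+_{D_0}(x)-d^-_{D_0}(x)\bigr)=0$; combined with the congruences at $u,v$ and $\sum_{x\in V(G)}\beta(x)\equiv0$, this yields $\sum_{x\in V(G_1)}\beta_1(x)\equiv0\pmod3$, so $\beta_1\in Z(G_1,\Z_3)$. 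As $G_1\in\setCZ$, take a strongly-connected $\beta_1$-orientation $D_1$. Build $D$ on $G$ by deleting the oriented edge $ab$ of $D_1$, inserting the length-two directed path through $v$ with the same sense, and re-inserting all $D_0$-edges. Then $D$ is a $\beta$-orientation: the path contributes $0$ at $v$ and, at $a$ and $b$, exactly what the arc $ab$ did; at $u$ only $D_0$-edges are present; so the net degree at each vertex is as required.

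For strong connectivity, $D_1$ is strongly connected on $G_1$, subdividing the arc $ab$ into $a\to v\to b$ preserves strong connectivity on $V(G)\setminus\{u\}$, and re-inserting the $D_0$-edges only adds arcs among vertices of an already strongly-connected digraph. Finally $d^+_D(u)=d^+_{D_0}(u)\ge1$ and $d^-_D(u)=d^-_{D_0}(u)\ge1$, so $u$ both reaches and is reached from $V(G)\setminus\{u\}$, and $D$ is strongly connected. Since $\beta$ was arbitrary, $G\in\setCZ$. I expect the only real obstacle to be the $p=1$ case of the first step: making the net-degree corrections at $u$ and $v$ compatible with each other and with the in/out requirement at $u$. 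The rest is bookkeeping of the same flavor as in Lemma~\ref{split1}.
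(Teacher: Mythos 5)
Your proposal is correct, but it follows a genuinely different route from the paper's. The paper never redoes the orientation-extension argument for this lemma: it applies Lemma \ref{split1} twice through the auxiliary graph $H=G-u+vc$, where $c$ is a second neighbour of $u$; then $H_{[v,ab]}=G-u-v+ab=G_1\in\setCZ$ and $v$ is a $4^+$-vertex of $H$, so $H\in\setCZ$, and since $H=G_{[u,vc]}$ with $u$ a $4^+$-vertex of $G$, also $G\in\setCZ$ --- the only extra case being at least four parallel edges between $u$ and $v$ (when $v$ is the sole neighbour of $u$), which is settled via Lemmas \ref{3edge} and \ref{3closure}. You instead carry out from scratch a single extension argument in the spirit of the proofs of Lemmas \ref{3edge} and \ref{split1}: pre-orient all edges at $u$ and $v$ other than $va,vb$ so as to realize $\beta(u)$ and $\beta(v)$ while keeping one arc into and one arc out of $u$, lift a strongly-connected $\beta_1$-orientation of $G_1$, and subdivide the arc $ab$ through $v$. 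The delicate point is your case $e_G(v,V(G)\setminus\{u,v\})=3$, and it checks out: $4$-edge-connectivity forces $e_G(u,V(G)\setminus\{u,v\})\ge 1$, and with $d_G(u)\ge 4$ the admissible number $k$ of arcs oriented from $u$ to $v$ (for the fixed out-degree of $u$ in $\{1,2,3\}$) runs over at least two consecutive integers, so $r-2k$ can be made to avoid $\beta(v)$ and the one remaining edge at $v$ completes the correction; the boundary and strong-connectivity bookkeeping is sound. What the paper's reduction buys is brevity and almost no case analysis; what your direct argument buys is uniformity (the parallel-edge degeneracy needs no separate treatment), an explicit view of exactly where $4$-edge-connectivity is used, and explicit control of the in- and out-arcs at $u$, at the cost of the residue/interval analysis you flagged.
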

\begin{proof}
If $u$ has just one neighbor $v$, then there are at least $4$ parallel edges between $uv$. By Lemmas \ref{3edge} and \ref{3closure}, if $G/uv=G_1 \in\setCZ$, then $G\in \setCZ$.

So we assume that $u$  has at least two neighbors. Let $c\neq v$ be a neighbor of $u$, and $H=G-u+vc$. Then $H_{[v,ab]}=G-u-v+ab=G_1\in \setCZ$. Since $e_G(v, V(G)\setminus\{u,v\})\ge 3$, we know that $v$ is a $4^+$-vertex of  $H$, and so $H\in \setCZ$ by Lemma \ref{split1}. Notice that $u$ is a $4^+$-vertex of $G$ and $H=G_{[u,vc]}\in \setCZ$. Hence $G\in \setCZ$ by Lemma \ref{split1} again.
\end{proof}

\noindent{\bf Remark 2:} The condition ``$e_G(v, V(G)\setminus\{u,v\})\ge 3$'' in Lemma \ref{split2} cannot be dropped. If there are exactly two  parallel edges between $u$ and $v$ in $G$ and both $u$ and $v$ have exactly two other edges connecting $V(G)\setminus\{u,v\}$, then this graph $G$ does not belong to $\setCZ$ by Observation \ref{inandnotin}.

\subsection{Special contractible graphs}

Let $mK_2$ be the graph with two vertices and $m$ parallel edges. Let $K_3^1$, $K_3^2$, and $K_4^*$ be the graphs as depicted in Figure \ref{Hn4m12}.

\begin{figure}[!hpbt]
    \centering
    \includegraphics[width=1\textwidth,trim=65 600 65 80,clip]{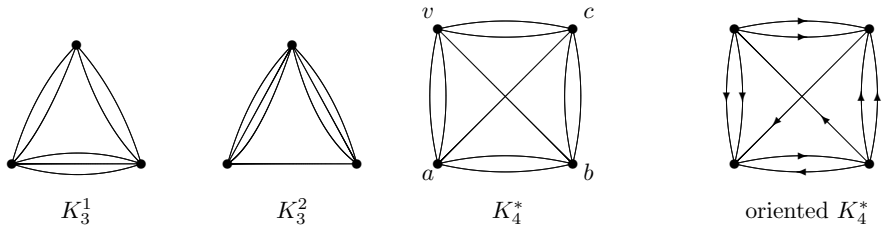}
    \caption{\small\it The graphs $K_3^1$,$K_3^2$,$K_4^*$ and strongly-connected mod $3$-orientation of $K_4^*$.}\label{Hn4m12}
\end{figure}

\begin{lemma}\label{H}
(i) $mK_2\in\setCZ$ if and only if $m\ge4$.

(ii) $K_3^1, K_3^2, K_4^*\in\setCZ$.
\end{lemma}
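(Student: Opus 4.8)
The plan is to verify each assertion by exhibiting, for every boundary $\beta\in Z(\cdot,\Z_3)$, an explicit strongly-connected $\beta$-orientation, using the small size of these graphs to make the case analysis finite. For part (i), the ``only if'' direction is immediate from Lemma~\ref{totaledge}: if $mK_2\in\setCZ$ then $m=|E(mK_2)|\ge 3|V(mK_2)|-2=4$. For the ``if'' direction it suffices by Observation~\ref{inandnotin} (the $\setCZ$-property is preserved under adding edges) to treat $m=4$. So I would list the boundary functions on the two vertices $x,y$ of $4K_2$ up to symmetry: since $\beta(x)+\beta(y)\equiv 0$, the cases are $(\beta(x),\beta(y))\in\{(0,0),(1,2),(2,1)\}$, and by the $x\leftrightarrow y$ symmetry only $(0,0)$ and $(1,2)$ need checking. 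For $(0,0)$ orient two edges each way: outdegrees $(2,2)$, difference $0\equiv 0$, and the orientation is strongly connected (there is a directed edge $x\to y$ and a directed edge $y\to x$). For $(1,2)$ orient three edges $x\to y$ and one edge $y\to x$: then $d^+(x)-d^-(x)=3-1=2\equiv 2$? I must be careful to match signs — I would pick the orientation realizing $d^+(x)-d^-(x)\equiv 1$, namely one edge out of $x$ and three into $x$ gives $1-3=-2\equiv 1$, which is still strongly connected because at least one edge points each way. Thus $4K_2\in\setCZ$.

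For part (ii) I would handle $K_4^*$ first, since Figure~\ref{Hn4m12} already displays one strongly-connected modulo $3$-orientation (i.e.\ the $\beta\equiv 0$ case is done), and then reduce the other $\beta$'s to it or check them directly. The cleanest route is to use the splitting/reduction lemmas: $K_4^*$ is $4$-edge-connected (one checks the degree sequence and that no $\le 3$-edge-cut exists), has a pair of vertices joined by parallel edges, or a $4^+$-vertex, so Lemma~\ref{split1} or Lemma~\ref{3edge}+\ref{3closure} can relate $K_4^*$ to a smaller graph already known to be in $\setCZ$ — in the best case $K_4^*/e$ or a split of $K_4^*$ is a $4K_2$ or a triangle with doubled edges. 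If such a reduction lands on $4K_2\in\setCZ$ (just proved) or on $K_3^1$ or $K_3^2$, we are done. For $K_3^1$ and $K_3^2$ — which are a triangle with some edges doubled/tripled so that $|E|\ge 3\cdot 3-2=7$ and the graph is $4$-edge-connected — I would either check the finitely many boundaries on three vertices ($\beta\in\{(0,0,0),(1,1,1),(2,2,2),(0,1,2),\dots\}$, a handful up to symmetry) and write down a strongly-connected realizing orientation in each, or again apply Lemma~\ref{3edge}: contracting a triple of parallel edges turns $K_3^i$ into a graph on two vertices with at least $4$ parallel edges, i.e.\ $\ge\! 4K_2\in\setCZ$, and then Lemma~\ref{3closure}(ii) (or directly Lemma~\ref{3edge}) pulls the strongly-connected $\beta$-orientation back up to $K_3^i$.

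The main obstacle I anticipate is bookkeeping rather than conceptual: one must confirm $4$-edge-connectivity of $K_3^1,K_3^2,K_4^*$ (needed before invoking Lemmas~\ref{3edge}, \ref{3closure}, \ref{split2}), keep the sign convention $d^+_D(v)-d^-_D(v)\equiv\beta(v)$ consistent while flipping edges, and — the genuinely non-automatic point — verify \emph{strong connectivity} of each exhibited orientation, not merely that it realizes the boundary. For the tiny graphs here strong connectivity is easy to eyeball (a directed triangle plus extra edges, or two vertices with edges pointing both ways), but in the reduction approach I must make sure the lemma I cite (Lemma~\ref{3edge} or \ref{split1}) actually outputs a strongly-connected orientation, which it does by construction. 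So the proof will be short: ``only if'' by Lemma~\ref{totaledge}; ``if'' and part (ii) by reducing via Lemma~\ref{3edge}/\ref{3closure} to $\ge 4K_2$, whose membership in $\setCZ$ is checked by the two-case hand computation above, together with the orientation of $K_4^*$ shown in Figure~\ref{Hn4m12}.
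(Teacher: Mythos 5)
Your part (i) and your treatment of $K_3^1,K_3^2$ are fine and essentially the paper's argument: the lower bound from Lemma \ref{totaledge}, an explicit check (or the paper's digon-plus-adjusting-edges description) for $4K_2$, and for $K_3^i$ the contraction of a $3K_2$ onto a $\ge 4K_2$ followed by Lemma \ref{3edge}. The gap is in $K_4^*$. Your ``cleanest route'' — relating $K_4^*$ to a smaller $\setCZ$-graph via Lemma \ref{split1} or Lemma \ref{3edge}/\ref{3closure} — cannot be carried out with those lemmas. The graph $K_4^*$ has $4$ vertices, $10$ edges, every edge multiplicity at most $2$ (this is exactly why it survives the argument in Lemma \ref{n<=5}), so Lemma \ref{3edge} never applies: there is no set of $3$ parallel edges to contract. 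Lemma \ref{split1} fails too: $K_4^*_{[v,ab]}=K_4^*-v+ab$ is a $3$-vertex graph with only $5+1=6$ edges, and by Lemma \ref{totaledge} a $3$-vertex $\setCZ$-graph needs at least $7$ edges, so the split graph is never in $\setCZ$; likewise Lemma \ref{split2} lands on a $2$-vertex graph with at most $3$ edges. Finally, Lemma \ref{reduc-lem}/\ref{3closure} needs a nontrivial $\setCZ$-subgraph $H\subseteq K_4^*$, and every proper nontrivial subgraph is too sparse (at most $2$ edges on $2$ vertices, at most $5$ edges on $3$ vertices), so none exists. Hence ``if such a reduction lands on $4K_2$ or $K_3^i$, we are done'' is a hope that is never realized, and the $K_4^*$ case is left unproved; your fallback ``check the boundaries directly'' would work but is not carried out.

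For comparison, the paper closes this case with a bespoke construction rather than a cited reduction lemma: the zero boundary is handled by the explicit strongly-connected modulo $3$-orientation in Figure \ref{Hn4m12}; for $\beta(v)=\alpha\in\{\pm1\}$ one deletes $v$ and adds \emph{two} edges $ab,ac$, adjusting the boundary at $c$ to $\beta(c)+\alpha$, which produces $K_3^1\in\setCZ$; a strongly-connected orientation of that graph is then lifted by replacing the two added edges with directed paths through $v$ (net contribution $0$ at $v$) and orienting the one remaining edge $vc$ to realize $\beta(v)=\alpha$ while repairing the boundary at $c$. If you want to keep your structure, either reproduce this two-edge splitting argument (verifying strong connectivity of the lift) or do the finite case check of all boundaries on $K_4^*$ explicitly; as written, the key case of the lemma is missing.
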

\begin{proof}
(i) By Lemma \ref{totaledge}, we have that $mK_2\in\setCZ$ implies $m\ge4$. When $m\ge 4$, we first orient two of the edges in the opposite direction to obtain a digon. Then there are at least two edges remaining, and we can use them to modify the boundaries of end vertices. This gives a strongly-connected $\beta$-orientation for any given boundary function $\beta$, and so $mK_2\in\setCZ$.

(ii) For $K_3^1, K_3^2$, each of them contains a $3K_2$, and contracting a $3K_2$ results a $4K_2\in\setCZ$. So $K_3^1, K_3^2\in\setCZ$ by Lemma \ref{3edge}.

Let $\beta\in Z(K_4^*,\Z_3)$. If $\beta=0$ at each vertex, then a strongly-connected modulo $3$-orientation of $K_4^*$ is in the last graph of Figure \ref{Hn4m12}. Otherwise, without loss of generality, we may assume $\beta(v)=\alpha \in\{-1,1\}$.
Consider a graph $G_1=K_4^*-v+ab+ac$ with boundary $\beta_1$ such that $\beta_1(a)=\beta(a)$, $\beta_1(b)=\beta(b)$ and $\beta_1(c)=\beta(c)+\alpha$. Then $\beta_1\in Z(G_1,\Z_3)$ and $G_1\cong K_3^1\in\setCZ$, and there exists a strongly-connected $\beta_1$-orientation of $G_1$. In $K_4^*$, replace the added edges $ab, ac$ by $av, vb$ and $av, vc$ with their orientation preserved, respectively. Then orient the remaining edge $vc$ of $K_4^*$ from $v$ to $c$ if $\alpha=1$, and from $c$ to $v$ if $\alpha=-1$. This gives a strongly-connected $\beta$-orientation of $K_4^*$. Hence $K_4^*\in\setCZ$.
\end{proof}

Now we show that some complete bipartite graphs are in $\setCZ$. Note that $K_{4,9}$ has $13$ vertices and $36$ edges, and so $K_{4,9}\notin\setCZ$ by Lemma \ref{totaledge}.
\begin{lemma}\label{bi}
When $m\geq 4$ and $n\geq 10$, we have $K_{m,n}\in\setCZ$.
\end{lemma}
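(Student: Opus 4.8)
The plan is to realize $K_{m,n}$ with $m\ge 4$ and $n\ge 10$ as a graph obtainable from a known small $\setCZ$-graph by repeatedly applying the operations of adding edges, contracting vertex pairs, splitting off via Lemma~\ref{split1}, and taking $3$-closures, so that Lemmas~\ref{reduc-lem}, \ref{3closure}, and \ref{split1} propagate membership in $\setCZ$. The natural candidate to seed the argument is $K_{4,10}$: I would first prove $K_{4,10}\in\setCZ$ directly and then, since $K_{m,n}$ for $m\ge 4$, $n\ge 10$ contains $K_{4,10}$ as a spanning subgraph on $\ge 14$ vertices (after the obvious identification of vertex classes), add the missing edges and invoke the ``adding edges'' half of Observation~\ref{inandnotin}. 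Concretely, if $V(K_{m,n})=A\cup B$ with $|A|=m$, $|B|=n$, pick $A'\subseteq A$ with $|A'|=4$ and $B'\subseteq B$ with $|B'|=10$; the subgraph induced by $A'\cup B'$ is $K_{4,10}$, and $K_{m,n}\supseteq K_{4,10}$ on the same vertex set once we note any spanning supergraph of a $\setCZ$-graph is again in $\setCZ$ (Observation~\ref{inandnotin} applied repeatedly, noting $K_{m,n}$ is connected and $K_{4,10}$ spans it after adding isolated-then-connected vertices — more carefully, one builds up from $K_{4,10}$ by adding one vertex at a time, each joined by $\ge 4\ge 3$ edges, using the $3$-closure Lemma~\ref{3closure} to absorb each new vertex).

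So the crux is $K_{4,10}\in\setCZ$. Here I would use the splitting Lemma~\ref{split1} to reduce the number of vertices in the class of size $10$. Let $A=\{a_1,a_2,a_3,a_4\}$ and $B=\{b_1,\dots,b_{10}\}$. Take $v=b_{10}$, a $4$-vertex with $va_1, va_2\in E$; then $K_{4,10}\,{}_{[b_{10},a_1a_2]} = K_{4,9}+a_1a_2$, i.e. $K_{4,9}$ with one extra edge inside $A$. By Lemma~\ref{split1}, it suffices to show this graph is in $\setCZ$. Repeat: split off $b_9$ using two of its edges to $A$-vertices, adding another edge inside $A$; after several such steps we arrive at $K_{4,t}$ plus a multigraph on $A$, for some small $t$, and eventually at a graph on the $4$ vertices of $A$ together with just a couple of $B$-vertices, with many parallel/multiple edges among the $A$-vertices. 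At that point I expect to reach something that contains a $3K_2$ or a $K_4^*$ or $K_3^1$-type configuration, or can be contracted (via Lemma~\ref{3edge} / Lemma~\ref{3closure}) down to an $mK_2$ with $m\ge 4$, at which point Lemma~\ref{H} finishes it. One must check the edge-connectivity hypothesis ($4$-edge-connected) is maintained along the way so that Lemmas~\ref{3edge} and \ref{3closure} apply: $K_{4,10}$ is $4$-edge-connected, and each split/contraction here keeps it $4$-edge-connected because every vertex retains degree $\ge 4$ and the only small cuts would isolate a single vertex.

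The main obstacle I anticipate is bookkeeping the multigraph that accumulates on the four vertices of $A$ as $B$-vertices are split off, and verifying that at each stage the resulting graph is still $4$-edge-connected and still admits one more legal split (each $b_i$ must still have two edges available to distinct appropriate vertices). A cleaner alternative to control this is to split off all ten $B$-vertices, distributing the resulting $2\cdot 10=20$ new edges among the $\binom{4}{2}=6$ pairs in $A$ as evenly as possible, landing on a multigraph on $4$ vertices with each pair joined by $3$ or $4$ parallel edges — this multigraph contains a $4$-edge-connected spanning subgraph with a $3K_2$, which contracts to a $4K_2\in\setCZ$, so by Lemma~\ref{3edge} (and Observation~\ref{inandnotin} for the extra edges) the multigraph is in $\setCZ$, and then ten applications of Lemma~\ref{split1} pull back up to $K_{4,10}\in\setCZ$. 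I would present this as the proof, checking only the easy degree/connectivity conditions at each step and citing Lemmas~\ref{split1}, \ref{3edge}, \ref{3closure}, \ref{H} and Observation~\ref{inandnotin} for the propagation.
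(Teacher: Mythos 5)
Your overall architecture is the same as the paper's: prove $K_{4,10}\in\setCZ$ by repeatedly applying Lemma~\ref{split1} to delete the vertices of the $10$-class while adding edges inside the $4$-class, and then obtain $K_{m,n}$ for general $m\ge 4$, $n\ge 10$ as the $3$-closure of a $K_{4,10}$ inside the ($4$-edge-connected) graph $K_{m,n}$, invoking Lemma~\ref{3closure}(ii). (Your first phrasing via ``spanning subgraph'' is not correct as stated, but your self-correction through the $3$-closure is exactly the paper's step.)

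However, the concrete endgame you commit to is based on a miscount and does not work. The operation $G_{[v,ab]}=G-v+ab$ underlying Lemma~\ref{split1} deletes the vertex $v$ \emph{together with all of its edges} and adds a \emph{single} new edge $ab$; it is not a splitting-off of two edges into one path-replacement plus retention of the rest. Hence splitting off all ten $B$-vertices deposits only $10$ edges on the four $A$-vertices, not $2\cdot 10=20$. Your target multigraph, ``each of the $\binom{4}{2}=6$ pairs joined by $3$ or $4$ parallel edges,'' needs at least $18$ edges and is therefore unreachable; with $10$ edges the most even distribution is $2,2,2,2,1,1$, which contains no $3K_2$ and no configuration you can cite directly. (Even with your intended count, ``contracts to a $4K_2$'' is off: contracting one $3K_2$ in a $4$-vertex multigraph leaves $3$ vertices, so a second contraction via Lemma~\ref{3edge} would be needed.) Your primary sketch only says you ``expect to reach'' a $3K_2$, $K_3^1$ or $K_4^*$ configuration, which is precisely the part that must be verified. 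The paper closes this gap by prescribing the ten added edges explicitly — two parallel edges $x_ix_{i+1}$ for each $i$ (indices mod $4$) plus the two diagonals $x_1x_3$, $x_2x_4$ — so that the final $4$-vertex multigraph is exactly $K_4^*$, which is in $\setCZ$ by Lemma~\ref{H}(ii); ten applications of Lemma~\ref{split1} then pull $\setCZ$-membership back up to $K_{4,10}$. With such an explicit choice of the added edges (or any other $10$-edge multigraph on $A$ that you verify is in $\setCZ$), your argument is repaired and coincides with the paper's proof; also note that Lemma~\ref{split1} needs no $4$-edge-connectivity along the way, so that bookkeeping worry is unnecessary.
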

\begin{proof}
We first show $K_{4,10}\in\setCZ$. Let $(X,Y)$ be a bipartition of $K_{4,10}$ with $X=\{x_1, x_2, x_3, x_4\}$ and  $Y=\{y_i|1\leq i \leq10\}$.
We apply Lemma \ref{split1} to delete vertices in $Y$ and add edges in $X$. For $1\le i\le 4$, we delete $y_{2i-1},y_{2i}$ and add two parallel edges $x_ix_{i+1}$, where $x_5=x_1$. Then delete $y_9,y_{10}$ and add edges $x_1x_3,x_2x_4$. Now the remaining graph is isomorphic to $K_4^*\in\setCZ$. By applying Lemma \ref{split1} recursively, we conclude that $K_{4,10}\in\setCZ$.

When $m\geq 4$ and $n\geq 10$, $K_{m,n}$ is $4$-edge-connected. Pick a subgraph $K_{4,10}$ in $K_{m,n}$.  Then it is easy to see that $K_{m,n}=cl_3(K_{4,10})$. Since $K_{4,10}\in\setCZ$, we have $K_{m,n}\in\setCZ$ by Lemma \ref{3closure}(ii).
\end{proof}

By Observation \ref{inandnotin}, if a graph $G$ contains $K_{m,n}\in\setCZ$ as a spanning subgraph with $m\geq 4$ and $n\geq 10$, then $G\in\setCZ$. We shall prove a similar proposition below when $G$ contains $K_{3,t}$  as a spanning subgraph and $t$ is large ($t\ge 14$ suffices).

For an integer $t\ge 4$, a $4$-edge-connected graph on $t+3$ vertices is denoted by $K_{3,t}^+$ if it contains $K_{3,t}$  as a spanning subgraph.

\begin{lemma}\label{3t}
For $t\geq 14$, $K_{3,t}^+\in\setCZ$.
\end{lemma}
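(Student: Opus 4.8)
The plan is to realize $K_{3,t}^+$ as the $3$-closure of a small $\setCZ$-graph, mimicking the strategy used for $K_{m,n}$ in Lemma \ref{bi}, but now working harder because the ``small side'' has only $3$ vertices. Write $\{x_1,x_2,x_3\}$ for the side of size $3$ and $Y=\{y_1,\dots,y_t\}$ for the side of size $t$ in the spanning $K_{3,t}$, together with the extra edges of $K_{3,t}^+$ (which may lie anywhere). First I would isolate a fixed sub-bipartite-graph $K_{3,s}$ on $\{x_1,x_2,x_3\}$ and a fixed small subset $\{y_1,\dots,y_s\}\subseteq Y$ for a well-chosen constant $s$ (I expect $s$ around $10$--$12$), and show directly that this $K_{3,s}$, or a slight modification of it obtained by adding a couple of edges inside $\{x_1,x_2,x_3\}$ or between two of the $y_i$'s, belongs to $\setCZ$. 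The tool for this is repeated application of Lemma \ref{split1}: each $y_i$ in $\{y_1,\dots,y_s\}$ is a degree-$3$ vertex of $K_{3,s}$, which is \emph{not} a $4^+$-vertex, so I cannot split it off immediately; instead I would peel off the $y_i$'s in pairs, each time using the two spare edges at $y_i$ to absorb its boundary and adding one new edge among the $x_j$'s, exactly as in the $K_{4,10}$ computation, until what remains is one of the known small $\setCZ$-graphs $K_3^1$, $K_3^2$, $K_4^*$, or a $4K_2$. Running Lemma \ref{split1} backwards then gives $K_{3,s}\in\setCZ$ (possibly after adding a bounded number of edges, which is harmless by Observation \ref{inandnotin}).

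Second, with a nontrivial $H\in\setCZ$ on the vertex set $\{x_1,x_2,x_3,y_1,\dots,y_s\}$ in hand, I would compute $cl_3(H)$ inside $G=K_{3,t}^+$. Every remaining vertex $y_j$ with $j>s$ is adjacent to all of $x_1,x_2,x_3$, hence has at least $3$ edges into $V(H)$, so $y_j\in cl_3(H)$; processing the $y_j$'s one at a time shows $V(cl_3(H))=V(G)$, i.e. $cl_3(H)=G$. Then Lemma \ref{3closure}(ii) applied with the target graph $G/cl_3(H)=G/G=K_1$ (a single vertex, trivially in $\setCZ$) — or more carefully, the observation that contracting $cl_3(H)$ down leaves nothing — yields $K_{3,t}^+\in\setCZ$. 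Here $4$-edge-connectedness of $K_{3,t}^+$ is part of the hypothesis, so Lemma \ref{3closure} applies.

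The one subtlety, and the step I expect to be the real work, is the base case: I must verify that $K_{3,s}$ (for the chosen $s$, likely the smallest value making the peeling-off bookkeeping close up) genuinely reduces through Lemma \ref{split1} to a graph already known to be in $\setCZ$. This requires tracking parities: each deletion of a pair $y_{2i-1},y_{2i}$ removes $6$ edges at the $x$-vertices and adds back $2$, so degrees at the $x_j$'s drop by a controlled amount, and I must choose which new edges $x_ax_b$ to add so that the residual multigraph on $\{x_1,x_2,x_3\}$ (plus possibly a surviving pair of $y$'s) is exactly $K_3^1$, $K_3^2$, $K_4^*$, or $4K_2$ — all of which are in $\setCZ$ by Lemma \ref{H}. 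Since $t\ge 14>s$, there is slack to absorb off-by-one issues, e.g. by first deleting a few extra $y_j$'s via Lemma \ref{split1} (each still a $3$-vertex — so again peeled in pairs, or absorbed into the closure at the end rather than the start) until the count is convenient; the hypothesis $t\ge14$ is presumably exactly what guarantees enough room for this. Once the base case is pinned down, the closure argument is routine.
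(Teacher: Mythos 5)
There is a genuine gap, and it sits exactly where you yourself flag ``the real work'': the base case you want does not exist. No bipartite graph $K_{3,s}$ lies in \setCZ\ for any $s$: by Lemma \ref{totaledge} a nontrivial \setCZ-graph on $s+3$ vertices needs at least $3(s+3)-2=3s+7$ edges, while $K_{3,s}$ has only $3s$ (it is not even $4$-edge-connected). So any correct proof must consume the extra edges of $K_{3,t}^+$, and their location is not yours to choose: $4$-edge-connectivity only forces each $y_j$ to carry some additional edge, which may lie inside $Y$ or be a parallel edge to the $3$-side, and need never supply the ``couple of edges inside $\{x_1,x_2,x_3\}$'' that your modified base graph asks for. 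The peeling mechanism you describe also has no lemma behind it: Lemma \ref{split1} requires a $4^+$-vertex, and each $y_i$ has degree $3$ in $K_{3,s}$ --- it cannot simultaneously donate two edges to the new edge $x_ax_b$ and retain two spare edges to absorb its boundary (with a single spare edge you cannot realize every element of $\Z_3$, which is precisely why Lemma \ref{split1} is stated for $4^+$-vertices). ``Peeling in pairs'' would have to mean Lemma \ref{split2}, but that lemma requires the two deleted vertices to be adjacent, and two $y_i$'s in the bipartite $K_{3,s}$ never are; you never invoke Lemma \ref{split2}, which is in fact the indispensable tool for this lemma.

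The paper's proof runs in the opposite direction: it does not try to certify a small bipartite piece, but deletes vertices of the big side $A$ of the actual graph $K_{3,t}^+$, guided by the components of $G[A]$, which is where the forced extra edges live. An isolated vertex of $G[A]$ must have a parallel edge to $B$ and is removed by Lemma \ref{split1}; a component of order at least $2$ is peeled along a spanning tree by Lemma \ref{split1} until one adjacent pair remains, which is removed by Lemma \ref{split2}; each removal adds one new edge inside $B=\{x,y,z\}$, and $t\ge 14$ guarantees at least $t/2\ge 7$ additions, enough to build $K_3^1\in\setCZ$ on $B$. The resulting graph is $cl_3(K_3^1)$, hence in \setCZ\ by Lemma \ref{3closure}(ii), and undoing the split operations gives $K_{3,t}^+\in\setCZ$. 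Your second step (taking the $3$-closure of a \setCZ-subgraph containing the $3$-side and concluding via Lemma \ref{3closure}) is sound and matches the paper's ending, but it has nothing to stand on; repairing the proposal means replacing the $K_{3,s}$ base case by an argument that exploits the genuine extra edges of $K_{3,t}^+$, i.e.\ essentially the component analysis above.
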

\begin{proof}
Let $(A,B)$ be a bipartition of $G=K_{3,t}^+$ with $|A|= t,|B|=3$ and $E(A, B)$ contains a complete bipartite graph $K_{3,t}$. Denote $B=\{x,y,z\}$. Our strategy is to apply Lemmas \ref{split1} and \ref{split2} to delete vertices in $A$ and add edges to $B$ such that the part of $B$ forms a graph $K_3^1\in\setCZ$. Note that in the part of $B$, we need to add at most $7$ edges to form a $K_3^1$. In the part of $A$ , we can delete a vertex or two adjacent vertices and add any one of $xy, xz, yz$ by using Lemmas \ref{split1} and \ref{split2}. We will proceed to add two parallel edges $xy$, two parallel edges $xz$ and three parallel edges $yz$. The only concern is that we need to keep the remaining graph $4$-edge-connected.

Let $C_1, C_2, \ldots, C_s$ be all the components of $G[A]$. Given a component $C_i$ where $1\le i\le s$. We first note that the operations of the following cases keep the remaining graph $4$-edge-connected. If $|V(C_i)|=1$, then it means that there are parallel edges between $V(C_i)$ and some vertex of $B$, and we can delete the vertex $V(C_i)$ and add a new edge in $B$ by using Lemma \ref{split1}. If $|V(C_i)|=2$, then  there are two adjacent vertices $u, v$ in $V(C_i)$. Clearly, $e_G(v, B)\ge 3$ and Lemma \ref{split2} is applied. In this case we delete $V(C_i)$ and add a new edge in $B$. If $|V(C_i)|\ge 3$, we pick a spanning tree of $C_i$, and then delete a pendent vertex in the tree and add a new edge in $B$ by using Lemma \ref{split1} iteratively,  until this component becomes two adjacent vertices. Now we use Lemma \ref{split2} to delete this last two vertices and add a new edge in $B$. In total, all those operations could add at least
\[
\sum\limits_{|V(C_i)|\ge 2}(|V(C_i)|-1) + \sum\limits_{|V(C_i)|=1}|V(C_i)|\ge
\sum\limits_{|V(C_i)|\ge 2}\frac{|V(C_i)|}{2}+
\sum\limits_{|V(C_i)|=1}|V(C_i)|\ge \frac{1}{2}\sum_{i=1}^{s}|V(C_i)|\ge7
 \]
 edges to part $B$.

Therefore, we can successfully apply these operations to obtain a $K_3^1\in\setCZ$ in part $B$, and the resulting graph is $4$-edge-connected and it is formed by $cl_3(K_3^1)$. Hence it is in $\setCZ$ by  (ii) of Lemma \ref{3closure}. By using Lemmas \ref{split1} and \ref{split2} recursively, we can get that $K_{3,t}^+\in\setCZ$.
\end{proof}

As mentioned in the introduction, we have $\phi(K_6)=3$; and there is another $5$-edge-connected planar graph $2C_5\cdot K_1$ on $6$ vertices with flow index exactly $3$ (see Section 5 in \cite{LTWZ18}). We shall show below that $4$-edge-connected graphs with fewer vertices have flow index less than $3$.

\begin{lemma}\label{n<=5}
For a $4$-edge-connected graph $G$ on $n\le 5$ vertices, $\phi(G)<3$.
\end{lemma}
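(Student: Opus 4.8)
The plan is to use Theorem \ref{SClemma} and simply produce a strongly-connected modulo $3$-orientation of $G$, arguing by induction on $n=|V(G)|$. The base cases $n\le 2$ are immediate: a $4$-edge-connected graph on two vertices is $mK_2$ with $m\ge 4$, and $mK_2\in\setCZ$ by Lemma \ref{H}(i), so it has a strongly-connected modulo $3$-orientation. For the inductive step I would first dispose of large parallel classes. If $e_G(x,y)\ge 3$ for some pair $x,y$, set $G'=G/E(x,y)$; every edge cut of $G'$ corresponds to an edge cut of $G$ of the same size, so $G'$ is $4$-edge-connected on $n-1$ vertices, and by induction it has a strongly-connected modulo $3$-orientation. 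Taking $\beta=0$ (so that the induced boundary $\beta'$ on $G'$ is also $0$), Lemma \ref{3edge} extends this orientation to a strongly-connected modulo $3$-orientation of $G$. Hence we may assume $3\le n\le 5$ and $e_G(x,y)\le 2$ for every pair of vertices.

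Under these assumptions there are only finitely many graphs $G$ to consider, since $\delta(G)\ge 4$ forces $|E(G)|\ge 2n$ while $|E(G)|\le 2\binom{n}{2}\le 20$. If $G$ is Eulerian — which is automatic when $n=3$, where $G$ must be the triangle with every edge doubled — then orienting all edges along an Euler circuit gives an orientation with $d^+_D(v)=d^-_D(v)$ at every $v$, hence a modulo $3$-orientation, and it is strongly connected because the circuit lets one travel from any vertex to any other. The remaining non-Eulerian graphs on $4$ or $5$ vertices are then handled individually: several of them contain as a spanning subgraph, or reduce via the splitting operation of Lemma \ref{split1} (possibly followed by the $3$-closure of Lemma \ref{3closure}) to, the graph $K_4^*\in\setCZ$ of Lemma \ref{H}(ii) or to a smaller $4$-edge-connected graph already treated; for the rest one simply writes down a strongly-connected modulo $3$-orientation. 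This last task is routine since the out-degree of each vertex is forced modulo $3$ by its degree and the out-degrees must sum to $|E(G)|$, so only very few candidate orientations need be inspected for strong connectivity.

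The hard part will be this final step: the systematic enumeration and verification of the finitely many near-simple, non-Eulerian, $4$-edge-connected graphs on $4$ and $5$ vertices. Nothing here is deep, but one must be careful to enumerate the admissible underlying simple graphs together with their multiplicity patterns subject to $\delta(G)\ge 4$ and $4$-edge-connectivity, discard those reducible by Lemmas \ref{3edge}, \ref{3closure} and \ref{split1}, and for each survivor exhibit an orientation and check that it is strongly connected. One subtlety to keep in mind is that a reduction through Lemma \ref{split1} is only legitimate when the reduced graph lies in $\setCZ$ (not merely has flow index below $3$), so not every apparent reduction can actually be used; in the few cases where no reduction is available the explicit orientation must be given by hand.
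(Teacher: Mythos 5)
Your overall skeleton (reduce parallel classes of size at least $3$ via Lemma \ref{3edge}, handle the all-even case by an Euler circuit, then finish by a finite check) is sound as far as it goes, but the proof is not actually complete: the entire content of the lemma for $n=4,5$ is pushed into the step you yourself call ``the hard part'' and then never carried out. After your reductions you are left with all $4$-edge-connected multigraphs on $4$ or $5$ vertices with edge multiplicities at most $2$, minimum degree at least $4$, and at least one odd-degree vertex. For $n=5$ this is a genuinely large family: every way of doubling a set of edges of $K_5$ whose parity pattern is not all-even already gives such a graph (and $4$-edge-connectivity is automatic there), and further underlying graphs such as $K_5$ minus an edge or minus a matching with suitable doublings must also be examined. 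You exhibit no orientations and perform no enumeration for any of these; moreover, as you correctly note, reductions via Lemma \ref{split1} need the reduced graph to lie in ${\mathcal S}_3$, while your induction hypothesis only yields $\phi<3$, so many of the reductions you gesture at are in fact unavailable. As written, the argument is a plan for a case analysis rather than a proof.

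The missing idea that collapses this case analysis is Mader's splitting lemma \cite{Mader78}, which is what the paper uses. In a minimal counterexample, \emph{any} vertex of even degree (not only the case where all degrees are even) can be split off while preserving the relevant edge-connectivity; deleting it yields a smaller $4$-edge-connected graph whose strongly-connected modulo $3$-orientation lifts back, since each lifted pair of edges through the split vertex contributes one incoming and one outgoing edge there. Hence in a minimal counterexample every degree is odd, which forces $|V(G)|$ to be even, i.e.\ $|V(G)|=4$; combined with your own observation that no pair of vertices is joined by three or more parallel edges (Lemma \ref{3edge} plus minimality), each vertex has odd degree at least $5$ and at most $6$, so degree exactly $5$ distributed as $2,2,1$ over its three neighbours, and the graph is forced to be $K_4^*\in{\mathcal S}_3$ by Lemma \ref{H}(ii), a contradiction. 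Your Euler-circuit trick only covers the all-even subcase and therefore leaves the bulk of the work undone; either import Mader's lemma as above, or actually supply the enumeration and the explicit strongly-connected modulo $3$-orientations for the surviving graphs on $4$ and $5$ vertices.
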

\begin{proof}
When $n\leq 2$, it holds by (i) of Lemma \ref{H}.
Suppose that $G$ is a minimal counterexample of the lemma with the least vertices.  Then $|V(G)|\geq 3$ and $G$ has no strongly-connected modulo $3$-orientation. If $G$ has an even degree vertex, by Mader's splitting lemma (see \cite{Mader78}), we can get a smaller counterexample. So the degree of each vertex of $G$ must be odd and $|V(G)|$ can only be $4$. By Lemma \ref{3edge}, $G$ does not contain three parallel edges, and so each vertex $v$ of $G$ has exactly $3$ neighbors. Thus $G$ can only be isomorphic to the graph $K_4^*$, and then $G\in\setCZ$ by Lemma \ref{H}, which is a contradiction.
\end{proof}

\section{Proofs of the main results}

Now we are ready to present the proofs of Theorems \ref{thmmain1} and \ref{main2m}. In fact, we shall prove a stronger version of Theorem \ref{main2m} with complete characterization of the bad attachment, stated as Theorem \ref{main2s}.
In this section, {\em we always let  $G$ be a simple graph with $\min\{\delta(G),\delta(G^c)\}\ge 4$}, where $G^c$ denotes the complement of $G$. For a vertex set $S\subset V(G)$, denote $\bar{S}=V(G)\setminus S$.

\begin{lemma}\label{4edgeconn}
If $G$ has an edge-cut of size at most $3$ and $|V(G)|\geq 26$, then $G^c\in \setCZ$.
\end{lemma}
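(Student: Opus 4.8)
\textbf{Proof proposal for Lemma \ref{4edgeconn}.}

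The plan is to exploit the fact that if $G$ has a small edge-cut, then the two sides of the cut are joined by very few edges in $G$, hence joined by \emph{almost all} possible edges in $G^c$. Concretely, let $(S,\bar S)$ be an edge-cut of $G$ with $e_G(S,\bar S)\le 3$, and assume without loss of generality $|S|\le |\bar S|$. First I would dispose of the degenerate cases where one side is tiny: if $|S|\le 2$, the minimum degree condition $\delta(G)\ge 4$ forces each vertex of $S$ to have most of its neighbors inside $S$, which is impossible when $|S|\le 2$ unless there are multi-edges (not allowed, $G$ simple) — so in fact $e_G(S,\bar S)\le 3$ with $|S|=1$ contradicts $\delta(G)\ge 4$, and $|S|=2$ forces at least $2\cdot 4 - 2 = 6 > 3$ crossing edges, again a contradiction. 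Hence we may assume $3\le |S|\le |\bar S|$, so both sides have at least $3$ vertices, and since $|V(G)|\ge 26$ the larger side $\bar S$ has at least $13$ vertices.

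Next, the heart of the argument: in $G^c$, the number of edges between $S$ and $\bar S$ is $|S|\cdot|\bar S| - e_G(S,\bar S) \ge |S|\cdot|\bar S| - 3$, which is nearly the complete bipartite graph $K_{|S|,|\bar S|}$. I would then show that $G^c$ contains a spanning complete bipartite subgraph, or close enough to it, between $S$ and $\bar S$ after possibly discarding the (at most $3$) missing edges — more precisely, $G^c[S,\bar S]$ contains $K_{a,b}$ with $a=|S|-3$ and $b=|\bar S|$ (delete at most one endpoint in $S$ for each missing edge), or symmetric variants. Now split on the size of the smaller side $|S|$:
\begin{itemize}
\item If $|S|\ge 4$ (so $a = |S|-3$ could still be small), I would instead argue that $G^c$ contains a large $K_{m,n}$-subgraph with $m\ge 4$, $n\ge 10$ by a more careful bipartition choice, perhaps moving the $\le 3$ deficient vertices to a convenient part; then $G^c\in\setCZ$ by Lemma \ref{bi} together with Observation \ref{inandnotin} (since the $K_{m,n}$ is spanning, or its $3$-closure is all of $G^c$ by Lemma \ref{3closure} using $4$-edge-connectivity of $G^c$).
\item If $|S|=3$, then $G^c$ contains between $S$ and $\bar S$ a bipartite graph that is $K_{3,|\bar S|}$ minus at most $3$ edges, with $|\bar S|\ge 23$. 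After deleting at most $3$ vertices of $\bar S$ to kill the missing edges we still have $K_{3,t}$ with $t\ge 20\ge 14$; since $G^c$ is $4$-edge-connected and contains this as a (spanning-after-closure) subgraph, $G^c$ is a $K_{3,t}^+$ or has $cl_3$ of such equal to all of $G^c$, and Lemma \ref{3t} together with Lemma \ref{3closure}(ii) gives $G^c\in\setCZ$.
\end{itemize}
In all cases the conclusion $G^c\in\setCZ$ follows. One should also first check that $G^c$ is itself $4$-edge-connected (so that Lemmas \ref{3closure}, \ref{3t}, \ref{bi} apply via their $4$-edge-connectivity hypotheses): since $\delta(G^c)\ge 4$ and a complement of a graph with a small cut is highly connected on $\ge 26$ vertices, a short counting argument on $e_{G^c}(T,\bar T)$ for any $T$ shows $e_{G^c}(T,\bar T)\ge 4$.

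The main obstacle I anticipate is the bookkeeping when $|S|$ is small (namely $|S|\in\{3,4,5\}$), because then $a=|S|-3$ may be $0,1,2$, too small to directly invoke Lemma \ref{bi} or \ref{3t} on the $S$-side; the fix is to choose the bipartition of the $K_{m,n}$ (resp.\ $K_{3,t}$) so that the part of size $\ge 4$ (resp.\ size $3$) is taken \emph{inside $\bar S$} and the large part contains $S$ together with most of $\bar S$ — one must verify that the $\le 3$ missing crossing edges can always be absorbed, e.g.\ by deleting $\le 3$ vertices from the large part, while keeping enough vertices ($\ge 10$, resp.\ $\ge 14$) and $4$-edge-connectivity. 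The counting $|V(G)|\ge 26$ is exactly what guarantees the large part survives these deletions; I expect this is where the bound $26$ comes from.
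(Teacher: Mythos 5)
Your overall strategy is the paper's: use the small cut $(S,\bar S)$ of $G$ to get an almost-complete bipartite graph across the cut in $G^c$, delete at most three vertices of the large side to obtain a genuine complete bipartite graph in \setCZ{} via Lemma \ref{bi}, and absorb the rest with the $3$-closure (Lemma \ref{3closure}(ii)). But there are concrete gaps. First, you only exclude $|S|\le 2$, while the one-line count $|S|(|S|-1)\ge 2|E(G[S])|\ge 4|S|-e_G(S,\bar S)\ge 4|S|-3$ (using $\delta(G)\ge 4$ and simplicity of $G$) already forces $|S|\ge 5$ and $|\bar S|\ge 5$; this is what the paper does, and it makes your $|S|=3$ branch vacuous. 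That branch is also the weakest part of what you wrote: $K_{3,t}$ itself is \emph{not} in \setCZ{} (its $t$-side vertices have degree $3$; cf.\ Lemma \ref{totaledge}), so Lemma \ref{3t} only applies to a $4$-edge-connected graph on $t+3$ vertices containing $K_{3,t}$ as a spanning subgraph, and you never verify that the subgraph of $G^c$ induced on $S$ together with the surviving part of $\bar S$ is $4$-edge-connected --- this is not automatic, since a $t$-side vertex may have all of its remaining $G^c$-edges outside that vertex set. Your suggested fix of placing the small part of the bipartition inside $\bar S$ cannot work either, because the cut only controls $G$-edges between $S$ and $\bar S$; there is no complete bipartite structure in $G^c$ between a subset of $\bar S$ and the rest of the graph.

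Second, the two verifications on which your conclusion rests --- that $G^c$ is $4$-edge-connected, and that the $3$-closure of the chosen $K_{m,n}$ is all of $G^c$ (i.e.\ the at most three deleted vertices are absorbed) --- are only announced (``a short counting argument shows'', ``one must verify''), not carried out; absorption does need an argument, since a deleted vertex could carry all three missing edges and then has only $|S|-3$ edges to $S$ in $G^c$. The paper settles both points with one count: setting $S_1=\{x\in\bar S:\ e_{G^c}(x,S)\le 3\}$, each $x\in S_1$ satisfies $e_G(x,S)\ge |S|-3\ge 2$, so $3\ge e_G(S_1,S)\ge 2|S_1|$ and hence $|S_1|\le 1$; every other vertex of $\bar S$ sends at least four $G^c$-edges to $S$, and the single possible exceptional vertex still has $G^c$-degree at least $4$, which yields both the $4$-edge-connectivity of $G^c$ and $G^c=cl_3(K_{s,t})$. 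Note that this count uses $|S|\ge 5$, which you did not establish. With $s=|S|\ge 5$ and $t\ge |\bar S|-3\ge 10$ (this is exactly where $|V(G)|\ge 26$ enters), Lemmas \ref{bi} and \ref{3closure}(ii) finish the proof. So your main case follows the right route, but you must add the $|S|\ge 5$ count and an explicit $S_1$-type verification to close it, and drop the $|S|=3$ branch.
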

\begin{proof}
Let $E_G(S,\bar{S})$ be an edge-cut of size at most $3$ in $G$. Since $\delta(G)\ge 4$, we have $$|S|(|S|-1)\ge 2|E(G[S])|\ge 4|S|-e_G(S,\bar{S})\ge 4|S|-3,$$
which implies $|S|\ge 5$. Similarly, we have $|\bar{S}|\ge 5$ as well. Since $\frac{1}{2}|V(G)|\geq 13$, one of $S$ and $\bar{S}$ has a size at least $13$, say $|\bar{S}|\geq 13$.

In $G^c$, consider the subgraph $E_{G^c}(S,\bar{S})$. It is almost a complete bipartite graph with at most $3$ edges deleted. Let $K_{s,t}$ be a maximal complete bipartite subgraph of $E_{G^c}(S,\bar{S})$ with $s=|S|\ge 5$. Then $t\ge |\bar{S}|-3\geq 10$. By Lemma \ref{bi}, $K_{s,t}\in \setCZ$. Let $S_1=\{x\in \bar{S}|e_{G^c}(x, S)\le 3\}$. Since $|S|\ge 5$ and $3\ge e_{G}(\bar{S}, S)\ge e_{G}(S_1, S)\ge|S_1||S|-3|S_1|$, we have $|S_1|\le 1$. This implies that $G^c$ is $4$-edge-connected since the only possible vertex in $S_1$ has at least $4$ edges connecting $\bar{S_1}$. Moreover, we have that $G^c=cl_3(K_{s,t})$. Thus $G^c\in \setCZ$ by Lemma \ref{3closure}.
\end{proof}

Define
$${\mathcal Y}_1=\{Y\subseteq V(G)|\ \exists H\subseteq G \text{ with } H\in \setCZ \text{ and } G[Y]=cl_3(H)\text{ in } G\}~{\text{and}}$$
$${\mathcal Y}_2=\{Y\subseteq V(G)|\ \exists H\subseteq G^c \text{ with } H\in \setCZ \text{ and } G^c[Y]=cl_3(H) \text{ in } G^c\}.~~$$

\begin{equation}\label{max3closure}
 \text{Choose}~ Y\in {\mathcal Y}_1\cup {\mathcal Y}_2 \text{  with } |Y| \text{ maximized }.
\end{equation}

\begin{lemma}\label{Ylarge4}
If $|V(G)|\geq 32$, then $|Y|\ge |V(G)|-4$.
\end{lemma}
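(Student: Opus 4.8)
The plan is to argue by contradiction: suppose $|Y| \le |V(G)|-5$, so that $|\bar Y| \ge 5$ where $\bar Y = V(G)\setminus Y$. The strategy is to find a nontrivial $\setCZ$-subgraph $H'$ inside one of the two graphs $G[\bar Y]$, $G^c[\bar Y]$, or more precisely inside the bipartite-type graphs between $\bar Y$ and $Y$; taking its $3$-closure will then produce a strictly larger element of ${\mathcal Y}_1\cup{\mathcal Y}_2$ (since $Y$ was itself a $3$-closure in $G$ or $G^c$, and we can merge the two closures), contradicting the maximality in \eqref{max3closure}. So the key is a counting argument: with $|V(G)|\ge 32$ and $|\bar Y|\ge 5$, the edges inside $\bar Y$ together with the edges from $\bar Y$ to $Y$ — counted in both $G$ and $G^c$ — are too numerous to avoid creating a denser $\setCZ$-configuration.

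First I would set up the counting. For each vertex $v\in\bar Y$, by definition of the $3$-closure of $H$ (where $G[Y]=cl_3(H)$, say, in $G$), we have $e_G(v,Y)\le 2$; symmetrically in the $G^c$ case $e_{G^c}(v,Y)\le 2$. Since $d_G(v)+d_{G^c}(v)=|V(G)|-1\ge 31$, and one of these, say $e_G(v,Y)\le 2$, we get $e_G(v,\bar Y)+e_{G^c}(v,Y)+e_{G^c}(v,\bar Y)\ge 29$; combined with $e_{G^c}(v,Y)\le |Y|$ and $e_G(v,\bar Y)+e_{G^c}(v,\bar Y)= |\bar Y|-1$, this forces $e_{G^c}(v,Y)$ to be essentially all of $Y$ or $|\bar Y|$ to be large. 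I would then split into the two cases according to whether $Y\in{\mathcal Y}_1$ or $Y\in{\mathcal Y}_2$ (by symmetry treat $Y\in{\mathcal Y}_1$, so $e_G(v,Y)\le 2$ for all $v\in\bar Y$), and within that, use the bound to show that in $G^c$ almost every vertex of $\bar Y$ is joined to almost every vertex of $Y$. That is, $E_{G^c}(\bar Y, Y)$ contains a near-complete bipartite graph: deleting the few low-degree vertices, we get $K_{s,t}$ with $s\ge$ (most of $|\bar Y|$) and $t\ge$ (most of $|Y|$).

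Next I would push the sizes over the thresholds in Lemmas \ref{bi} and \ref{3t}. If $|\bar Y|\ge 4$ after trimming and $|Y|\ge 10$, then $K_{s,t}\in\setCZ$ by Lemma \ref{bi}; if only $3$ vertices of $\bar Y$ survive the trimming but are joined to $t\ge 14$ vertices of $Y$ (and $4$-edge-connectivity holds), we get a $K_{3,t}^+\in\setCZ$ by Lemma \ref{3t}; and the small leftover cases with $|\bar Y|\le 3$ or so can be absorbed using Lemmas \ref{split1} and \ref{split2} exactly as in the proof of Lemma \ref{4edgeconn}. In every branch the resulting $\setCZ$-subgraph $H'$ lives in $G^c$ and has $cl_3(H')$ in $G^c$ containing all of $Y$ plus at least one vertex of $\bar Y$ — here I would use that $cl_3$ is monotone and that $Y$, being a $3$-closure (in $G$, hence with $\le 2$ edges from each outside vertex *in $G$*, i.e. $\ge |\bar Y|-3$ edges *in $G^c$*), gets swallowed by $cl_3(H')$ in $G^c$ since each $y\in Y$ has $\ge 3$ edges in $G^c$ into $\bar Y$. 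Thus $|cl_3(H')| > |Y|$, contradicting \eqref{max3closure}.

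The main obstacle I expect is the bookkeeping at the boundary of the counting: making the inequalities with the precise constant $32$ (rather than something larger) actually yield $s\ge 4$ or $s\ge 3$ survivors of the right degree, while simultaneously verifying the $4$-edge-connectivity needed to invoke Lemmas \ref{bi} and \ref{3t} (these require $4$-edge-connected host graphs, and a handful of bad vertices in $\bar Y$ could a priori create a small cut). Handling the case $Y\in{\mathcal Y}_2$ is symmetric but requires care about which of $e_G$, $e_{G^c}$ is the "small" one. I would organize the argument so that the worst case ($|\bar Y|=5$) is checked explicitly with the degree-sum $\ge 31$, confirming that enough edges land in the bipartite part to force the complete bipartite core; the other ranges of $|\bar Y|$ are then strictly easier.
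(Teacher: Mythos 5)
Your overall plan---assume $|\bar Y|\ge 5$, build a complete bipartite $\setCZ$-subgraph in $G^c$ between a few vertices of $\bar Y$ and many vertices of $Y$, and let its $3$-closure contradict the maximality in (\ref{max3closure})---has the same skeleton as the paper's proof, but two of your key steps fail as stated. First, your mechanism for concluding that the new closure beats $Y$ is wrong: you claim it swallows all of $Y$ because ``each $y\in Y$ has $\ge 3$ edges in $G^c$ into $\bar Y$.'' The $3$-closure property of $Y$ in $G$ only bounds $e_G(x,Y)\le 2$ for $x\in\bar Y$; it says nothing about $e_G(y,\bar Y)$ for $y\in Y$, and a vertex of $Y$ may be adjacent in $G$ to all of $\bar Y$ and hence have no $G^c$-edges into $\bar Y$ at all (also, ``merging'' a closure taken in $G$ with one taken in $G^c$ is not a meaningful operation). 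The paper's fix is different and essential: it only shows the new closure $Y_0\in\mathcal{Y}_2$ contains $\bar Y$, and then a separate counting argument (its statement (\ref{barYnotin})) shows that any such $Y_0$ has $|\bar{Y_0}|\le 2|\bar Y|/(|\bar Y|-2)<4$, because every vertex of $\bar{Y_0}\subseteq Y$ would need at least $|\bar Y|-2$ $G$-edges into $\bar Y$ while $e_G(Y,\bar Y)\le 2|\bar Y|$; hence $|Y_0|\ge |V(G)|-3>|Y|$. That counting step is the idea missing from your proposal, and without it the contradiction does not materialize.

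Second, the construction of the bipartite core is not mere boundary bookkeeping in the tight regime. Since $|V(G)|\ge 32$ only guarantees $|Y|\ge 16$, taking an arbitrary $Z\subseteq\bar Y$ with $|Z|=4$ and $Y'=\{y\in Y\mid e_G(y,Z)=0\}$ gives only $|Y'|\ge |Y|-8$, which misses the threshold $n\ge 10$ of Lemma \ref{bi} when $|Y|\in\{16,17\}$. The paper closes exactly this gap by first proving $|\bar Y|\ge 15$, then analyzing $X=\{y\in Y\mid e_G(y,\bar Y)\le 1\}$: if $|X|\ge 5$ it builds a $K_{5,|Z_1|}$ with the $5$ vertices on the $Y$-side, and if $|X|\le 4$ it chooses $Z$ to contain the $\bar Y$-neighbours of two vertices of $Y\setminus X$, improving the estimate to $|Y'|\ge |Y|-6\ge 10$. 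Some device of this kind is needed; ``the worst case $|\bar Y|=5$'' is in fact not the hard case. On the other hand, your worry about verifying $4$-edge-connectivity is unnecessary here: membership in $\mathcal{Y}_2$ only requires a $\setCZ$-subgraph of $G^c$ (the complete bipartite graph itself, via Lemma \ref{bi}) together with its $3$-closure as a vertex set, so no connectivity of a host graph is invoked, and Lemma \ref{3t} plays no role in this lemma.
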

\begin{proof}
If $|V(G)|\geq 32$, then one of $G, G^c$ has at least $\frac{1}{4}|V(G)|(|V(G)|-1)\ge 8(|V(G)|-1)$ edges. By Lemma \ref{czsize16}, it contains a subgraph $H\in\setCZ$ with $|V(H)|\ge 16$. Hence $|Y|\geq 16 $ by (\ref{max3closure}). Without loss of generality, assume that $Y\in {\mathcal Y}_1$.

Suppose, to the contrary, that $|\bar{Y}|\geq 5$. Since $G[Y]$ is a $3$-closure of a $\setCZ$-graph in $G$, we have
\begin{equation}\label{xbarY2}
\mbox{$e_G(Y,x)\leq2$ for each vertex $x\in \bar{Y}$.}
\end{equation}

We first show the following thing:
\begin{equation}\label{barYnotin}
\mbox{for any $Y_0\in{\mathcal Y_2}$, we have $\bar{Y}\not\subset Y_0$.}
\end{equation}
If $\bar{Y}\subset Y_0$, then $\bar{Y_0}\subset Y$.
For each $y\in \bar{Y_0}$, we have $e_{G^c}(y, Y_0)\le 2$, and so $e_{G^c}(y,\bar{Y})\le 2$, which gives $e_{G}(y,\bar{Y})\ge |\bar{Y}|-2$. Hence, together with (\ref{xbarY2}), we have $$2|\bar{Y}|\ge e_{G}(Y,\bar{Y})\ge (|\bar{Y}|-2)|\bar{Y_0}|,$$
which implies that $|\bar{Y_0}|\le 2|\bar{Y}|/(|\bar{Y}|-2)<4$ since $|\bar{Y}|\ge 5$ by the assumption. Hence $|Y_0|>|Y|+1$, and it contradicts the maximality of $|Y|$ in (\ref{max3closure}). This proves (\ref{barYnotin}).\\

Then we show the following thing:
\begin{equation}\label{barYlarge}
|\bar{Y}|\ge 15.
\end{equation}
In fact, if $|\bar{Y}|< 15$, then $|Y|\ge 18$ as $|V(G)|\ge 32$. Let $Z$ be a subset of $\bar{Y}$ with $|Z|=4$. Denote $Y'=\{y\in Y | e_G(y, Z)=0\}$. By (\ref{xbarY2}), there are at most $8$ vertices in $Y$ that are adjacent to some vertices in $Z$. So $|Y'|\ge |Y|-8\ge 10$. This implies that $E_{G^c}(Y', Z)$ forms a complete bipartite graph $H_1\cong K_{|Y'|, 4}\in\setCZ$ by Lemma \ref{bi}.

Now in $G^c$, consider the $3$-closure of $H_1$, namely $cl_3(H_1)$. We denote $Y_1=V(cl_3(H_1))$ in $G^c$ for convenience. By (\ref{xbarY2}), for each vertex $x\in \bar{Y}$, we have $e_{G^c}(Y',x)=|Y'|-e_{G}(Y',x)\ge10-2>3$, and so $x\in Y_1$ by definition. Thus $\bar{Y}\subset Y_1$. As $Y_1=V(cl_3(H_1))\in {\mathcal Y_2}$, it contradicts (\ref{barYnotin}), and hence this proves (\ref{barYlarge}).\\

Denote  $X=\{y\in Y | e_G(y, \bar{Y})\le 1\}$. If $|X|\ge 5$, we let $X_1$ be a subset of $X$ with $|X_1|=5$. Let $Z_1=\{z\in \bar{Y} | e_G(X, z)=0\}$. Then in $G$ there are at most $5$ vertices in $\bar{Y}$ that are adjacent to some vertices in $X_1$. So $|Z_1|\ge |\bar{Y}|-5\ge 10$ by (\ref{barYlarge}). Thus $E_{G^c}(X_1, Z_1)$ forms a complete bipartite graph $H_2\cong K_{5, |Z_1|}\in\setCZ$ by Lemma \ref{bi}. Now consider the $3$-closure of $H_2$ in $G^c$. Denote $Y_2=V(cl_3(H_2))$.  By (\ref{xbarY2}), for each vertex $z\in \bar{Y}$, we have $e_{G^c}(X_1,z)=|X_1|-e_{G}(X_1,z)\ge5-2=3,$ and so $z\in Y_2$ by definition.
This shows $\bar{Y}\subset Y_2$, a contradiction to (\ref{barYnotin}). Thus we must have $|X|\le 4$.

Since $|X|\le 4$ and $|Y|\ge 16$, we let $y_1,y_2\in Y\setminus X$ be two distinct vertices, that is, $e_G(y_i, \bar{Y})\ge 2$ for each $i=1,2$. Denote by $u_{i}, v_{i}\in \bar{Y}$ the two distinct neighbors of $y_i$ for each $i=1,2$. Let $Z$ be a subset of $\bar{Y}$ with $|Z|=4$ that contains $\{u_1,v_1\}\cup\{u_2,v_2\}$. Denote $Y'=\{y\in Y | e_G(y, Z)=0\}$. Then $y_i\in Y\setminus Y'$ with  $e_G(y_i, Z)\ge 2$ for $i=1,2$. By (\ref{xbarY2}), we have
$$2|Z|\ge e_G(Y\setminus Y', Z)=\sum_{i=1}^2 e_G(y_i, Z)+e_G((Y\setminus Y')\setminus\{y_1,y_2\}, Z)\ge 4+(|Y\setminus Y'|-2),$$
which implies that $|Y\setminus Y'|\le 2|Z|-2=6$, and so $|Y'|\ge |Y|-6\ge 10$.

Since $|Y'|\ge 10,$  we have that $E_{G^c}(Y', Z)$ forms a complete bipartite graph $H_3\cong K_{|Y'|, 4}\in\setCZ$ in $G^c$  by Lemma \ref{bi}.
Consider the $3$-closure of $H_3$ in $G^c$, and let $Y_3=V(cl_3(H_3))$. By (\ref{xbarY2}), for each vertex $x\in \bar{Y}$, we have $e_{G^c}(Y',x)=|Y'|-e_{G}(Y',x)\ge10-2>3,$ and hence $x\in Y_3$ by definition. Thus we have $\bar{Y}\subset Y_3$, which contradicts (\ref{barYnotin}). This completes the proof of Lemma \ref{Ylarge4}.
\end{proof}

\vspace{0.5cm}

\noindent{\bf Proof of Theorem \ref{thmmain1}:} By Lemma \ref{4edgeconn}, we may assume that both $G$ and $G^c$ are $4$-edge-connected. As in (\ref{max3closure}), we may, without loss of generality, assume that $Y\in {\mathcal Y}_1$. Thus $G[Y]=cl_3(H)$ for some subgraph $H\in\setCZ$ in $G$. Then $G/G[Y]$ has at most $5$ vertices by Lemma \ref{Ylarge4}. Since $G/G[Y]$ is $4$-edge-connected, we have $\phi(G/G[Y])<3$ by Lemma \ref{n<=5}, and so $\phi(G)<3$ by Lemma \ref{3closure}(i). This proves Theorem \ref{thmmain1}. $\blacksquare$

\vspace{1cm}

We shall prove the following theorem, which is stronger than Theorem \ref{main2m}. It provides a complete characterization of the bad attachment, and it also tells that the graph deleting the bad attachment is obtained from the $3$-closure of a $\setCZ$-graph.

\begin{figure}[!hpbt]
    \centering
    \includegraphics[width=0.8\textwidth,trim=65 525 55 75,clip]{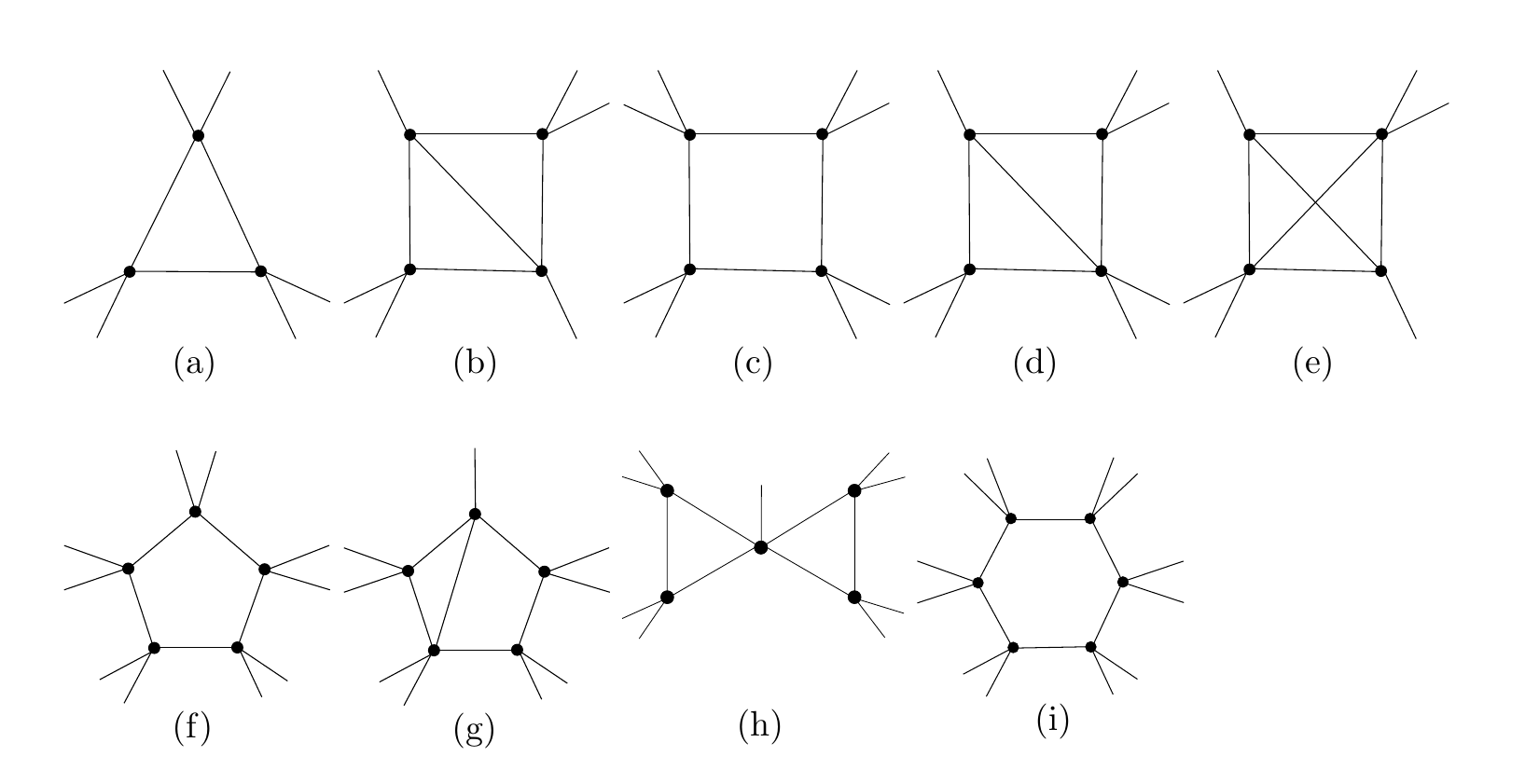}
    \caption{\small\it Characterization of all bad attachments.}\label{allbad}
\end{figure}

\begin{theorem}\label{main2s}
Let  $G$ be a simple graph with $|V(G)|\ge 73$. If $\min\{\delta(G),\delta(G^c)\}\ge 4$,
then one of the following statements holds:\\
(i)  $G\in \setCZ$ or $G^c\in\setCZ$.\\
(ii) both $G$ and $G^c$ are formed from the $3$-closure of a $\setCZ$-subgraph by adding a bad attachment isomorphic to Figure \ref{allbad} (c).\\
(iii) one of $G$ and $G^c$ is formed from the $3$-closure of a $\setCZ$-subgraph by adding a bad attachment isomorphic to Figure \ref{allbad} (a); the other is formed from the $3$-closure of a $\setCZ$-subgraph by adding a bad attachment isomorphic to Figure \ref{allbad} (a)-(i), or by adding two disjoint bad attachments isomorphic to Figure \ref{allbad} (a).
\end{theorem}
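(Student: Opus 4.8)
\textbf{Proof plan for Theorem \ref{main2s}.}

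The plan is to refine the analysis behind Lemma \ref{Ylarge4} and the proof of Theorem \ref{thmmain1}, pushing the counting far enough to pin down the structure of the ``leftover'' vertices. As before, by Lemma \ref{4edgeconn} we may assume both $G$ and $G^c$ are $4$-edge-connected (the excluded edge-cut case already gives conclusion (i) once $|V(G)|\ge 26$). Choose $Y$ as in (\ref{max3closure}), say $Y\in\mathcal Y_1$, so $G[Y]=cl_3(H)$ for some $H\in\setCZ$ and $e_G(Y,x)\le 2$ for every $x\in\bar Y$. The first step is to re-run the argument of Lemma \ref{Ylarge4} with the stronger hypothesis $|V(G)|\ge 73$; the larger vertex count should force $|\bar Y|\le 6$, and in fact one should be able to separate the cases $|\bar Y|\le 4$ (which yields (i) directly, exactly as in the proof of Theorem \ref{thmmain1} via Lemma \ref{n<=5} and Lemma \ref{3closure}(i)) from $5\le |\bar Y|\le 6$, which is the only case where (i) can fail. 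So the bulk of the work is to understand $G[\bar Y]$ together with the (few) edges from $\bar Y$ to $Y$ when $|\bar Y|\in\{5,6\}$.

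The second step is the local structural analysis of $\Gamma:=G[\bar Y]$. Here I would use: (a) $\delta(G)\ge 4$, so every vertex of $\bar Y$ has degree $\ge 4$ in $G$ but sends $\le 2$ edges to $Y$, hence has degree $\ge 2$ inside $\Gamma$; combined with $|\bar Y|\le 6$ this sharply limits $\Gamma$. (b) The maximality of $|Y|$ in (\ref{max3closure}): if some vertex $x\in\bar Y$ had $e_G(x,Y)\ge 3$ it would be swallowed into $cl_3(H)$, contradiction — so in fact $e_G(x,Y)\le 2$ is tight enough that $\bar Y$ is ``almost'' a complete bipartite graph to $Y$ on the $G^c$ side, which is what was exploited (via the sets $X$, $Y'$) to show $|X|\le 4$, etc.; these same bounds constrain how $\bar Y$ attaches. (c) The observation that the number of edges between $V(\Gamma)$ and $V(G)\setminus V(\Gamma)$ is $\sum_{x\in\bar Y}e_G(x,Y)$, which since each term is $\le 2$ and $|\bar Y|\le 6$ is at most $12$; one checks this is at most $3|V(\Gamma)|-|E(\Gamma)|$, so $\Gamma$ is genuinely a bad attachment, matching the definition. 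The finite check then enumerates the $4$-edge-connectivity-compatible possibilities for $\Gamma$ and its attaching edges, producing exactly the graphs (a), (b), (c) of Figure \ref{allbad}; and one verifies that after deleting $\Gamma$ the remaining graph is $cl_3(H')$ for a $\setCZ$-subgraph $H'$ (using Lemmas \ref{split1}, \ref{split2}, \ref{3t}, \ref{bi} to absorb the boundary vertices as in Lemmas \ref{bi} and \ref{3t}).

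The third step handles the interaction between $G$ and $G^c$, i.e.\ why the two-sided conclusions (ii) and (iii) take the stated form. If $G$ already lies in $\setCZ$ we are in case (i), so assume not; then $G$ carries a bad attachment $\Gamma$ of one of the types (a)--(c). Now I would translate this into a statement about $G^c$: a bad attachment $\Gamma$ in $G$ on vertex set $\bar Y$ corresponds, in $G^c$, to a set of $|\bar Y|\le 6$ vertices each sending $\ge |Y|-2$ edges into $Y$, i.e.\ $\bar Y$ sits inside (or nearly inside) the $3$-closure of a large complete bipartite graph in $G^c$. If $\Gamma\cong$ (c), a symmetry computation should show the complement of this configuration is again of type (c) on the same vertex set, giving (ii). If $\Gamma\cong$ (a), the complementary side has more room, and the counting (now with the exact degree data of (a), and still $\delta(G^c)\ge 4$) should allow either a single type (a)-(i) attachment or two disjoint type (a) attachments on the $G^c$ side, giving (iii); the case $\Gamma\cong$ (b) should be shown to force $G^c\in\setCZ$, hence fold into (i). Finally, the ``moreover'' part — that in every non-(i) case both $\phi(G)<3$ and $\phi(G^c)<3$ — follows because the bad attachments (a),(b),(c) are $4$-edge-connected graphs on $\le 6$ vertices, so have $\phi<3$ by Lemma \ref{n<=5}, and the rest of the graph is handled by Lemma \ref{3closure}(i), exactly as in the proof of Theorem \ref{thmmain1}.

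\textbf{Main obstacle.} I expect the hard part to be the case analysis in the second step: turning the inequalities ``$|\bar Y|\le 6$, $e_G(x,Y)\le 2$, $\delta\ge 4$, $4$-edge-connected, and $|Y|$ maximal'' into the precise finite list of Figure \ref{allbad} without missing a configuration, and then certifying in each surviving case that deleting $\Gamma$ really does leave a $3$-closure of a $\setCZ$-graph (rather than something that merely looks dense enough). Getting the bookkeeping of edges between $\bar Y$ and $Y$ exactly right — so that the ``$\le 3|V(\Gamma)|-|E(\Gamma)|$'' bad-attachment inequality is met with the correct slack and no spurious cases slip through — is where the argument is most delicate, and is presumably why the threshold is pushed up to $|V(G)|\ge 73$.
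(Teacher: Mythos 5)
The first step of your plan is where the argument breaks. Lemma \ref{Ylarge4} already gives $|\bar{Y}|\le 4$ once $|V(G)|\ge 32$, and the degree count $4|\bar{Y}|\le |\bar{Y}|(|\bar{Y}|-1)+e_G(Y,\bar{Y})\le |\bar{Y}|(|\bar{Y}|-1)+2|\bar{Y}|$ forces $|\bar{Y}|\ge 3$ whenever $\bar{Y}\ne\emptyset$. So the only possibilities are $\bar{Y}=\emptyset$ (which gives (i)) or $|\bar{Y}|\in\{3,4\}$; your cases $5\le|\bar{Y}|\le 6$, which were supposed to carry all of (ii) and (iii), never occur. Worse, your claim that $|\bar{Y}|\le 4$ ``yields (i) directly via Lemma \ref{n<=5}'' conflates $\phi<3$ with membership in $\setCZ$: Lemma \ref{n<=5} together with Lemma \ref{3closure}(i) only gives $\phi(G)<3$, whereas conclusion (i) asserts $G\in\setCZ$. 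In fact, when $|\bar{Y}|=3$ the conditions $\delta(G)\ge4$ and $e_G(x,Y)\le 2$ force $G[\bar{Y}]$ to be a triangle with exactly six edges to $Y$, i.e.\ a bad attachment, so $G\notin\setCZ$ by Remark 1. With your case split, the exceptional conclusions (ii)/(iii) would never be reached and (i) would be asserted falsely exactly in the cases the theorem is designed to describe.

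The second, larger gap is that the list (a)--(i) in conclusion (iii) cannot come from classifying $G[\bar{Y}]$ at all, since $|\bar{Y}|\le 4$ while the attachments (f)--(i) have five or six vertices; they arise on the complementary side. The paper's proof first constructs a $\setCZ$-subgraph $H$ of $G^c$ containing $\bar{Y}$: either a $K_{3,t}^+$ via Lemma \ref{3t} (when $G^c[Y\setminus X]$ has at least $14$ non-isolated vertices) or a $K_{4,t}$ around a vertex $x_0\in X$ found by pigeonhole (this is precisely where $|V(G)|\ge 73$ is used); it then takes $Z=V(cl_3(H))$ in $G^c$, notes $\bar{Z}\subseteq X$ with $3\le|\bar{Z}|\le 6$ by the maximality in (\ref{max3closure}), and classifies $G^c[\bar{Z}]$ using the count $\max\{3s-6,\,5s-s^2\}\le e_{G^c}(Z,\bar{Z})\le 2s$ together with the auxiliary $\setCZ$-graphs $L_1,\dots,L_9$ and $F^*$ (Lemma \ref{L}), which fold every denser configuration back into case (i). Your third step gestures at this translation but the specific claims are off: in the both-(c) case ($|\bar{Y}|=4$) the attachment of $G^c$ lives on $\bar{Z}\subseteq X\subseteq Y$, disjoint from $\bar{Y}$, not ``on the same vertex set''; type (b) never occurs as $G$'s own attachment; and without the explicit construction of $H$ in $G^c$, its $3$-closure $Z$, and Lemma \ref{L}, you cannot certify the structural conclusion that $G^c$ is the $3$-closure of a $\setCZ$-subgraph plus one of the listed attachments (or two disjoint copies of (a)).
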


\vspace{0.5cm}

\noindent{\bf Proof of Theorem \ref{main2m} assuming Theorem \ref{main2s}:} By Remark 1, we know that if $G$ contains a bad attachment, then $G\notin\setCZ$.
Now it suffices to prove the ``moreover part'' of Theorem \ref{main2m}. Assume that both $G\notin\setCZ$ and $G^c\notin\setCZ$. Then both $G$ and $G^c$ are $4$-edge-connected by Lemma \ref{4edgeconn}. By Theorem \ref{main2s},   $G$ is formed from the $3$-closure of a subgraph $H\in\setCZ$ by adding a bad attachment or two. By the description of the bad attachment in Figure \ref{allbad} (a)-(i) in Theorem \ref{main2s}, $G/cl_3(H)$ is a $4$-edge-connected graph on at most $5$ vertices for Figure \ref{allbad} (a)-(e), or $G/cl_3(H)$ is an Eulerian graph (i.e. every vertex has an even degree) for Figure \ref{allbad} (f),(i) and for two disjoint bad attachments as Figure \ref{allbad} (a), or $G/cl_3(H)$ is a $4$-edge-connected graph with two odd vertices for Figure \ref{allbad} (g),(h). In each case, we have that $\phi(G/cl_3(H))<3$  by Lemma \ref{n<=5} or by constructing a strongly-connected modulo $3$-orientation. Thus $\phi(G)<3$ by Lemma \ref{3closure}(i).  The same proof works for $G^c$ to show $\phi(G^c)<3$. This finishes the proof of Theorem \ref{main2m}.  $\blacksquare$

\vspace{1cm}

Before proving Theorem \ref{main2s}, we will show that some more graphs are in $\setCZ$. Each of these graphs has only one more edge than the responding bad attachment, and any graph obtained from one of them by adding edges is in $\setCZ$ by Observation \ref{inandnotin}.

\begin{lemma}\label{L}
Each of the graphs in Figure \ref{edgeadded} is in $\setCZ$.
\end{lemma}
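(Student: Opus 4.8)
\textbf{Proof plan for Lemma \ref{L}.}
The plan is to verify, graph by graph, that each member of Figure \ref{edgeadded} lies in $\setCZ$, using the toolkit already built in Section 2: the splitting operations (Lemmas \ref{split1} and \ref{split2}), the parallel-edge contraction (Lemmas \ref{3edge} and \ref{3closure}), and the base cases $mK_2$ ($m\ge 4$), $K_3^1$, $K_3^2$, $K_4^*$ from Lemma \ref{H}. Since each graph here is obtained by adding exactly one edge to a bad attachment on $3\le n\le 6$ vertices, they are small, so the strategy is: pick a $4^+$-vertex $v$ (these graphs are $4$-edge-connected, being ``one edge over'' a bad attachment that was already near the degree threshold), split off a pair of edges at $v$ via Lemma \ref{split1} to reduce $|V|$ by one, and repeat until the residual graph is one of $4K_2$, $K_3^1$, $K_3^2$, or $K_4^*$. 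When two adjacent vertices need to be removed together (because one of them has too low degree to split directly) Lemma \ref{split2} does the job, provided the other endpoint still sends $\ge 3$ edges outside the pair, which I would check holds in each relevant case. Alternatively, whenever the graph contains three parallel edges, contracting them lands in a graph with more multiplicity or fewer vertices, and Lemma \ref{3edge} (or the $3$-closure Lemma \ref{3closure}) finishes the reduction.

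Concretely, I would organize the verification by the shape of the underlying bad attachment from Figure \ref{allbad}. For the graphs built over the $3$-vertex attachments (corresponding to Figure \ref{allbad}(a)--(b) type), after adding the extra edge the three vertices typically carry enough internal multiplicity that contracting a $3K_2$ (if present) or splitting one outside-neighbour pair at a $4^+$-vertex reduces to $K_3^1$ or $K_3^2$, which are in $\setCZ$ by Lemma \ref{H}(ii). For the $4$-vertex cases, the target after one or two splits is $K_4^*$ or a $4K_2$; I would exhibit the explicit split sequence, each step citing Lemma \ref{split1}, checking the ``$4^+$-vertex'' hypothesis survives (it does, since adding one edge raises the edge count just enough that the invariant $e_G(v,\text{rest})\ge 3$ is maintained at each chosen $v$, exactly the book-keeping done in the proof of Lemma \ref{3t}). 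For the $5$- and $6$-vertex cases, the same peeling argument applies: delete vertices of (residual) degree $\ge 4$ one at a time by Lemma \ref{split1}, or pairs by Lemma \ref{split2}, driving $|V|$ down to $\le 4$, where Lemma \ref{H} and Lemma \ref{n<=5}'s classification close it out. In each reduction one should also confirm $4$-edge-connectivity is preserved so that Lemma \ref{split2} and Lemma \ref{3closure} apply; this is routine since splitting at a high-degree vertex or contracting parallel edges cannot create a cut of size $<4$ here.

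The main obstacle is not any single hard idea but the case analysis itself: there are as many small graphs as there are bad attachments in Figure \ref{allbad}, and for each I must name the right vertex (or adjacent pair) to eliminate at each step and check the degree/connectivity hypotheses of the splitting lemma before applying it. The one genuinely delicate point is Remark 2 after Lemma \ref{split2}: the operation of removing an adjacent pair $\{u,v\}$ fails exactly when $e_G(v,V(G)\setminus\{u,v\})\le 2$, i.e.\ when $u,v$ are joined by a digon and each has only two further edges outward. So for each graph I must verify that the ``extra edge'' we added (compared to the bad attachment) is always enough to avoid this degenerate configuration at the pair I want to contract — which it is, precisely because a bad attachment $\Gamma$ has at most $3|V(\Gamma)|-|E(\Gamma)|$ boundary edges, and adding one edge inside $\Gamma$ either raises $|E(\Gamma)|$ (helping the internal split) or, if the edge is a boundary edge, pushes some vertex past the threshold needed for Lemma \ref{split1}. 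Once this is checked uniformly, every graph in Figure \ref{edgeadded} reduces to a Lemma \ref{H} base case, and the proof is complete. $\blacksquare$
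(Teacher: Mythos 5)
Your toolkit is the right one and matches the paper's in spirit: the $L_i$ are handled with the splitting lemmas, the parallel-edge/$3$-closure machinery, and the small $\setCZ$-graphs of Lemma \ref{H}. The paper, however, does it in one uniform stroke rather than by peeling down to base cases: for each $L_i$ it designates a vertex $v$, the high-multiplicity hub $x$ and a vertex $b$, checks that $G_{[v,xb]}=L_i-v+xb$ is $4$-edge-connected and equals $cl_3(x)$, the $3$-closure of the single (trivially $\setCZ$) vertex $x$, so that $G_{[v,xb]}\in\setCZ$ by Lemma \ref{3closure}(ii), and then concludes $L_i\in\setCZ$ by a single application of Lemma \ref{split1}. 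No iterated splitting, no Lemma \ref{split2}, and no reduction to $K_3^1$, $K_3^2$ or $K_4^*$ is needed; the only thing to verify per graph is the $4$-edge-connectivity and the $3$-closure property of $G_{[v,xb]}$.

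The genuine gap in your write-up is that the verification is never actually carried out: for none of the nine graphs do you name the vertex (or adjacent pair) to be removed, nor check the hypotheses that you yourself identify as the delicate points. The blanket justifications offered in their place do not substitute for this: deleting a vertex and adding back only one edge can in general lower the edge-connectivity, so ``$4$-edge-connectivity is preserved\dots this is routine'' is precisely the per-graph check that constitutes the proof (the paper's ``it is easy to check''), and your uniform argument for why the degenerate configuration of Remark 2 never arises is heuristic rather than a proof. One step is also wrong as stated: Lemma \ref{n<=5} only gives $\phi<3$ and cannot certify membership in $\setCZ$, so it cannot ``close out'' a reduction via Lemmas \ref{split1} and \ref{split2}, whose hypotheses require the residual graph to be in $\setCZ$; the recursion must terminate at (a supergraph or $3$-closure of) one of the graphs of Lemma \ref{H}, or, as in the paper, at the $3$-closure of a single vertex.
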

\begin{proof}
For each $1\le i\le 9$, let $G=L_i$ be a graph with $v,x,b\in V(G)$ as in Figure \ref{edgeadded}. Then it is easy to check that $G_{[v,xb]}$ is $4$-edge-connected and $G_{[v,xb]}=cl_3(x)$, and thus $G_{[v,xb]}\in \setCZ$ by Lemma \ref{3closure} (ii). It follows that $G\in \setCZ$ from Lemma \ref{split1}.
\end{proof}

\begin{figure}[!hpbt]
\centering
\includegraphics[width=0.8\textwidth,trim=60 515 60 75,clip]{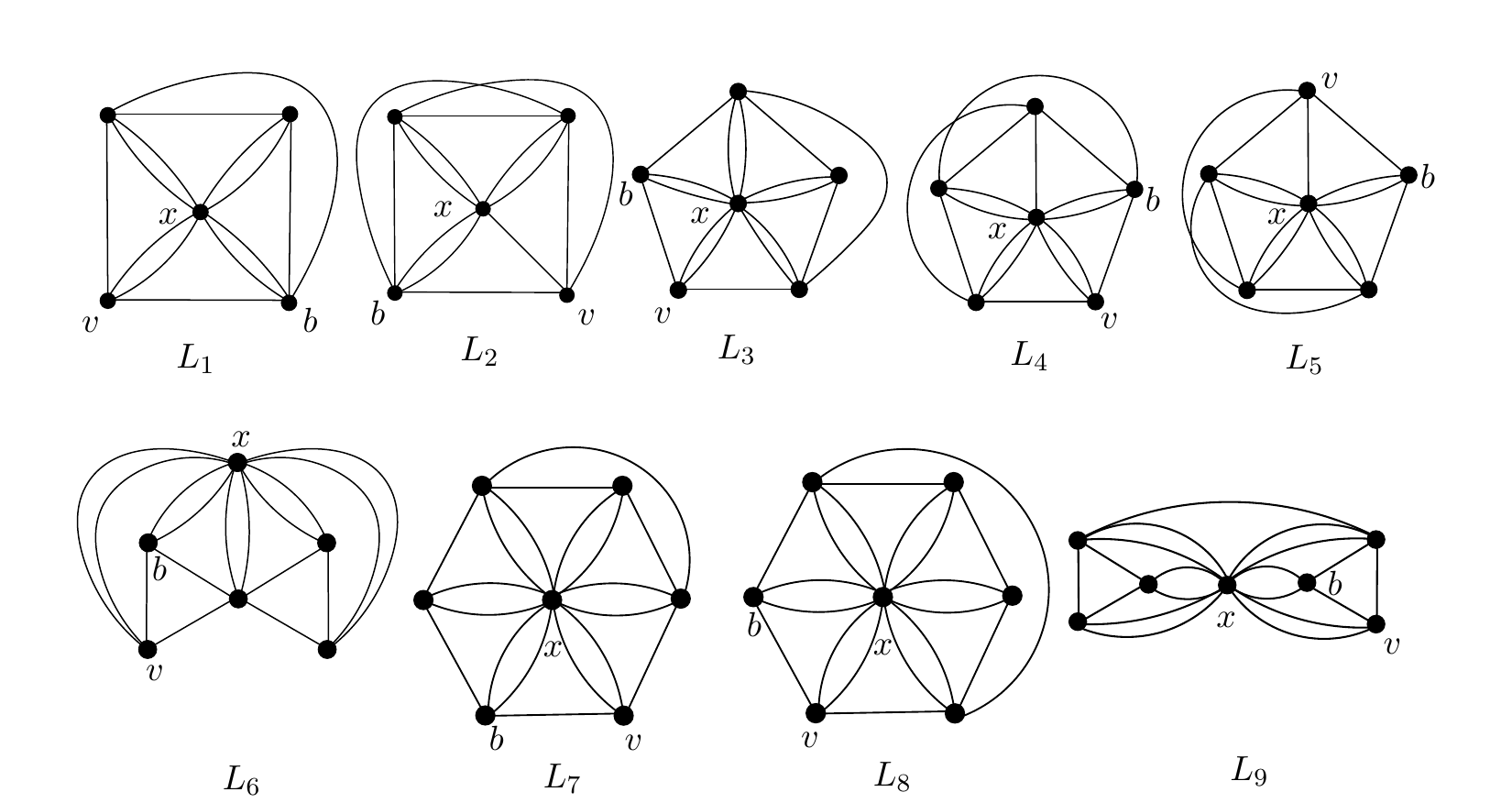}

\caption{\small\it A single edge added to each of the bad attachments.}\label{edgeadded}
\end{figure}

\noindent{\bf Proof of Theorem \ref{main2s}:} By Lemma \ref{4edgeconn}, we may assume that both $G$ and $G^c$ are $4$-edge-connected. As in (\ref{max3closure}), we choose  $Y\in {\mathcal Y}_1\cup {\mathcal Y}_2$ with $|Y|$ maximized. Without loss of generality, assume $Y\in {\mathcal Y}_1$. Let $X=\{x\in Y | e_G(x,\bar{Y})>0\}$.  Since $Y$ is a $3$-closure, for each vertex $x\in \bar{Y}$, $e_G(Y,x)\leq 2$. Thus
\begin{equation}\label{Xsize}
|X|\le e_G(X,\bar{Y})=e_G(Y,\bar{Y})\le 2|\bar{Y}|.
\end{equation}
Since $\delta(G)\ge 4$, we also have
\begin{equation}\label{barY34}
 4|\bar{Y}|-|\bar{Y}|(|\bar{Y}|-1)|\le e_G(Y,\bar{Y})\le 2|\bar{Y}|,
\end{equation}
which, together with Lemma \ref{Ylarge4}, shows that $3\le |\bar{Y}|\le 4$. We shall distinguish our discussion according to the value of $|\bar{Y}|$.\\

\noindent{\bf Case A} $|\bar{Y}|=3$.

By (\ref{barY34}), we have that $e_G(Y,\bar{Y})=6$ and $G[Y]$ forms a triangle. Thus this bad attachment of $G$ is isomorphic to Figure \ref{allbad} (a).
It follows from (\ref{Xsize}) that $|X|\le6.$ Since $|V(G)|\geq73$, we have $|Y\setminus X|\ge64$.

In the complementary graph $G^c$, $E_{G^c}(Y\setminus X,\bar{Y})$ forms a complete bipartite graph $K_{3, |Y\setminus X|}$. Consider the subgraph $G^c[Y\setminus X]$ induced by $Y\setminus X$ in $G^c$. Let $X_1$ be the set of non-isolated vertices in $G^c[Y\setminus X]$. If $|X_1|\ge 14$, then $G^c[X_1\cup \bar{Y}]$ forms a graph $H_1\cong K_{3,|X_1|}^+\in \setCZ$ by Lemma \ref{3t}. Otherwise, we have $|X_1|\le 13$, which implies that there are at least $51$ isolated vertices in $G^c[Y\setminus X]$. Since $\delta(G^c)\ge 4$ and $|\bar{Y}|=3$, each isolated vertex in $G^c[Y\setminus X]$ is connected to $X$. Since $|X|\le 6$ and $|(Y\setminus X) \setminus X_1|\ge 51$, there exists a vertex $x_0\in X$ such that $e_{G^c}(x_0, (Y\setminus X) \setminus X_1)\ge 10$ by Pigeon-Hole principle.
Let $X_0=\{y\in Y\setminus X | e_{G^c}(x_0, y)>0\}$. Then $|X_0|\ge 10$ and $E_{G^c}(X_0, \bar{Y}\cup\{x_0\})$ forms a complete bipartite graph $H_2=K_{4,|X_0|}\in \setCZ$ by Lemma \ref{bi}. Therefore, we can always find a $\setCZ$-subgraph $H\in\{H_1, H_2\}$ in $G^c$ that contains $\bar{Y}$.
Now consider the $3$-closure of $H$ in $G^c$ and let $Z=V(cl_3(H))$. Denote $s=|\bar{Z}|$. Since $E_{G^c}(Y\setminus X,\bar{Y})$ forms a complete bipartite graph $K_{3, |Y\setminus X|}$, we have $Y\setminus X\subset Z$, which is $\bar{Z}\subseteq X$. Then by (\ref{max3closure}), $$3=|\bar{Y}|\le s \le |X|\le 6.$$

For each vertex $x\in \bar{Z}$, $e_{G^c}(Z,x)\leq 2$, and thus $e_{G^c}(Z,\bar{Z})\leq 2s$.
Since $\min\{\delta(G)$, $\delta(G^c)$$\}$ $\geq4$ and $e_G(\bar{Z},\bar{Y})\leq e_G(Y,\bar{Y})\leq6$, we have $s(s-1)+e_{G^c}(Z,\bar{Z})\geq 4s$ and $e_{G^c}(Z,\bar{Z})\ge |\bar{Z}||\bar{Y}|-e_G(\bar{Z},\bar{Y})\ge 3s-6$. In summary,
\begin{equation}\label{attachedge}
\max\{3s-6,5s-s^2\}\leq e_{G^c}(Z,\bar{Z})\leq 2s.
\end{equation}
Since $3\le s\le 6$, we shall discuss the following cases, characterizing all the bad attachments in Theorem \ref{main2s} (iii).

\begin{itemize}
\item $s=3$.

By (\ref{attachedge}), we have $e_{G^c}(Z,\bar{Z})=6$. Then the only possibility is that $\bar{Z}$ induces a bad attachment isomorphic to Figure \ref{allbad} (a) in $G^c$.

\item $s=4$.

Then $6\le e_{G^c}(Z,\bar{Z})\le 8$ by (\ref{attachedge}).  If $e_{G^c}(Z,\bar{Z})=6$, then $\delta(G^c)\ge 4$ forces that the bad attachment induced by $\bar{Z}$ is isomorphic to Figure \ref{allbad} (b) or (e).

If $e_{G^c}(Z,\bar{Z})=7$, then $\delta(G^c)\ge 4$ forces that $G^c[\bar{Z}]$ has at least $5$ edges. If $G^c[\bar{Z}]\cong K_4$, then $G^c/cl_3(H)\cong L_1 \in\setCZ$ by Lemma \ref{L}, and so $G^c\in\setCZ$ by Lemma \ref{3closure}(ii). Hence,  Theorem \ref{main2s} (i) holds. Otherwise, $G^c[\bar{Z}]$ has exactly $5$ edges, and the bad attachment induced by $\bar{Z}$ is isomorphic to Figure \ref{allbad} (d).

If $e_{G^c}(Z,\bar{Z})=8$, then $\delta(G^c)\ge 4$ implies that $G^c[\bar{Z}]$ contains a cycle $C_4$. If $G^c[\bar{Z}]\cong C_4$, then the bad attachment induced by $\bar{Z}$ is isomorphic to Figure \ref{allbad} (c). Otherwise, $G^c[\bar{Z}]$ has at least $5$ edges, and $G^c/cl_3(H)$ contains a subgraph $ L_2 \in\setCZ$ by Lemma \ref{L}. This shows that $G^c\in\setCZ$ by Lemma \ref{3closure}(ii), and so Theorem \ref{main2s} (i) holds.

\item $s=5$.

By (\ref{attachedge}), we have $9\le e_{G^c}(Z,\bar{Z})\le 10$, and $\delta(G^c)\ge 4$ implies that $G^c[\bar{Z}]$ contains a cycle $C_5$ or a hourglass graph $T_2$ which consists of two triangles with a common vertex (see Figure \ref{MMT2}).

If $e_{G^c}(Z,\bar{Z})=10$, then the bad attachment induced by $\bar{Z}$ is isomorphic to Figure \ref{allbad} (f) when $G^c[\bar{Z}]\cong C_5$. Assume that $G^c[\bar{Z}]$ contains a cycle $C_5$ plus a chord. Then $G^c/cl_3(H)$ contains a subgraph $ L_3 \in\setCZ$ by Lemma \ref{L}. Therefore, $G^c\in\setCZ$ by Lemma \ref{3closure}(ii), and so Theorem \ref{main2s} (i) holds.

If $e_{G^c}(Z,\bar{Z})=9$, then $\delta(G^c)\ge 4$ further forces that $G^c[\bar{Z}]$ contains a cycle $C_5$ plus a chord or a $T_2$. When $G^c[\bar{Z}]$ contains additional edges, $G^c/cl_3(H)$ contains $L_4$, $L_5$ or $L_6 \in\setCZ$ , and so $G^c\in\setCZ$. Otherwise, the bad attachment induced by $\bar{Z}$ is isomorphic to Figure \ref{allbad} (g) or (h).

\begin{figure}[!hpbt]
\centering
\includegraphics[width=0.9\textwidth,trim=50 595 65 80,clip]{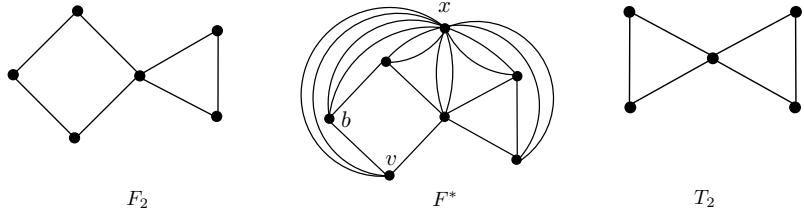}

\caption{\small\it The graphs $F$, $F^*$ and $T_2$.}\label{MMT2}
\end{figure}

\item $s=6$.

Then $e_{G^c}(Z,\bar{Z})=12$ by (\ref{attachedge}). Define the fish graph $F_2$ as a $4$-cycle attached to a triangle with a common vertex (see Figure \ref{MMT2}). Since $\delta(G^c)\ge 4$, $G^c[\bar{Z}]$ has minimal degree at least $2$, we deduce that $G^c[\bar{Z}]$ contains a $C_6$, an $F_2$ or two disjoint triangles.

When $G^c[\bar{Z}]$ contains an $F_2$, $G^c/cl_3(H)$ contains a graph $F^*$ as in Figure \ref{MMT2}. Since $F^*_{[v,xb]}=cl_3(x)$ and it is $4$-edge-connected, we have $F^*_{[v,xb]}\in\setCZ$ by Lemma \ref{3closure}. Then $F^*\in \setCZ$ by Lemma \ref {split1}, and so $G^c\in \setCZ$ by Lemma \ref{3closure}.

If $G^c[\bar{Z}]$ contains a cycle $C_6$ plus a chord, then $G^c/cl_3(H)$ contains $L_7$ or $L_8 \in\setCZ$, and so $G^c\in\setCZ$.  If $G^c[\bar{Z}]$ contains two disjoint triangles plus an additional edge, then $G^c/cl_3(H)$ contains $L_9 \in\setCZ$. Thus $G^c\in\setCZ$ and Theorem \ref{main2s} (i) holds.  Otherwise, the bad attachment induced by $\bar{Z}$ is isomorphic to Figure \ref{allbad} (i), or two disjoint bad attachments isomorphic to Figure \ref{allbad} (a).
\end{itemize}

\vspace{0.5cm}
\noindent{\bf Case B} $|\bar{Y}|=4$.

By (\ref{Xsize}), we have $|X|\le 8$, and so $|Y\setminus X|= |Y|-|X|\ge 61>10$. Then in $G^c$, $E_{G^c}(Y\setminus X,\bar{Y})$ forms a complete bipartite graph $H\cong K_{4, |Y\setminus X|}\in\setCZ$ by Lemma \ref{bi}. Consider the $3$-closure of $H$ in $G^c$ and let $Z=V(cl_3(H))$. Then $\bar{Y}\subset Z$ and $\bar{Z}\subseteq X$. For each vertex $x\in \bar{Z}$, we have $e_{G^c}(x,\bar{Y})\leq e_{G^c}(x,Z)\leq 2$ by definition,  and so  $$e_{G^c}(\bar{Z},\bar{Y})\leq 2|Z|.$$
On the other hand, we have $e_{G}(\bar{Z}, \bar{Y})\leq e_{G}(X,\bar{Y})\leq 2|\bar{Y}|=8$ by (\ref{Xsize}), and hence
$$e_{G^c}(\bar{Z},\bar{Y})= |\bar{Z}||\bar{Y}|-e_{G}(\bar{Z},\bar{Y})\ge 4|Z|-8.$$
Thus $4|\bar{Z}|-8\le 2|Z|$, i.e., $|Z|\leq 4$. By the maximality of $Y$ in (\ref{max3closure}), we must have $|Z|=4$. Therefore, all the inequalities above are exactly equalities. Thus we have $e_{G}(Y,\bar{Y})=e_{G}(\bar{Z},\bar{Y})=8$ and $e_{G^c}(Z,\bar{Z})=e_{G^c}(\bar{Y},\bar{Z})=8$.

Now we will adapt the same argument as in the proof of $s=4$ in Case A. Notice that $G[\bar{Y}]$ contains a cycle $C_4$ since $\delta(G)\ge 4$. If  $G[\bar{Y}]$ has at least $5$ edges, then $G/G[Y]$ contains a subgraph $L_2 \in\setCZ$ by Lemma \ref{L}. This shows that $G\in\setCZ$ by Lemma \ref{3closure}(ii), and so Theorem \ref{main2s} (i) holds. Otherwise, $G[\bar{Y}]$ is exactly a cycle $C_4$. Then in $G$ the bad attachment induced by $\bar{Y}$ is isomorphic to Figure \ref{allbad} (c). Analogously, either $G^c/cl_3(H)\in \setCZ$ or the bad attachment of $G^c$ induced by $\bar{Z}$ is isomorphic to Figure \ref{allbad} (c). This completes the proof of Theorem \ref{main2s}. $\blacksquare$

\section*{Acknowledgments}
Jiaao Li is partially supported by the Fundamental Research Funds for the Central Universities.
Xueliang Li and Meilin Wang are  partially supported by NSFC No.11871034, 11531011 and NSFQH No.2017-ZJ-790.

{\small

}
\end{document}